\documentclass[a4paper, 12pt, pdftex,reqno
%,draft% comment this for final version
]{amsart}
\usepackage[hang, flushmargin, bottom]{footmisc} % places footnotes at page bottom

\usepackage[hyperindex,breaklinks,pdftex, bookmarks]{hyperref}
\usepackage[color, 
 final  % set "final" option for final version
]{showkeys}

\usepackage[all,  curve, frame, arc, color]{xy}

\usepackage{mathtools}

\usepackage{helvet}         % selects Helvetica as sans-serif font
\usepackage{courier}        % selects Courier as typewriter font
%\usepackage{type1cm}        % activate if the above 3 fonts are
                            % not available on your system
%
%\usepackage{makeidx}         % allows index generation
\usepackage{graphicx}        % standard LaTeX graphics tool
                             % when including figure files
\usepackage{multicol}        % used for the two-column index

\usepackage{diagbox}

\usepackage{array,colortbl%,
%xcolor
}

%\usepackage{booktabs}

%\usepackage{caption} 
%\captionsetup[table]{skip=40pt}

\makeindex             % used for the subject index
                       % please use the style svind.ist with
                       % your makeindex program

%%%%%%%%%%%%%%%%%%%%%%%%%%%%%%%%%%%%%%%%%%%%%%%%%%%%%%%%%%%%%%%%%%%%%%%%%%%%%%%%%%%%%%%%%

%pacchetti
%\setlength{\oddsidemargin}{-0.01in}
%\setlength{\textwidth}{5.5in}
\usepackage[utf8x]{inputenc}
\usepackage[english]{babel}
\usepackage{amsmath} % tolto per svmult
\usepackage{amscd}
\usepackage{amsfonts}
\usepackage{amssymb}
\usepackage{amsthm} %tolto per svmult
\usepackage{mathabx}
\usepackage{mathrsfs}
\usepackage{graphicx}
\usepackage{textcomp}
\usepackage{url}
\usepackage{amsopn,latexsym,amscd}
\usepackage%[disable]
{todonotes}
\setlength{\marginparwidth}{2cm}
\usepackage{fullpage}
\usepackage{graphicx}

\usepackage{verbatim}

\usepackage{enumitem}

%comandi
  
\makeatletter
\newcommand{\eqnum}{\refstepcounter{equation}\textup{\tagform@{\theequation}}}
\makeatother

%teoremi, proposizioni, lemmi, ecc...
\newtheorem{thm}{Theorem}[section]
\newtheorem{lem}[thm]{Lemma}  %tolto per svmult

\newtheorem{prop}[thm]{Proposition}
\newtheorem{cor}[thm]{Corollary}

\renewcommand*{\theRthm}{\Alph{Rthm}}

\theoremstyle{definition}
\newtheorem{df}[thm]{Definition}

\newtheorem{rem}[thm]{Remark}
%
%\theoremstyle{remark}

%\newtheorem{rem}[thm]{Remark}

%insiemi numerici classici
\newcommand{\Z}{\mathbb{Z}}
\newcommand{\C}{{\mathbb{C}}}
\newcommand{\R}{{\mathbb{R}}}
\newcommand{\Q}{{\mathbb{Q}}}
\newcommand{\N}{{\mathbb{N}}}

\newcommand{\F}{\mathbb{F}}
\newcommand{\md}{\mathrm{M}}
\newcommand{\pp}{{\mathbb{P}}}

%lettere utili
\newcommand{\cB}{\mathcal{B}}
\newcommand{\G}{{\mathcal{G}}}
\newcommand{\rk}{{\operatorname{rk}}}
\newcommand{\codim}{{\operatorname{codim}}}
\newcommand{\Aut}{{\operatorname{Aut}}}
\newcommand{\out}{{\operatorname{Out}}}
\newcommand{\into}[0]{\hookrightarrow}
\newcommand{\id}{\mathrm{id}}
%%%%
%%%%

\newcommand{\Filt}{\mathcal{F}}

\newcommand{\Hom}{\operatorname{Hom}}

\newcommand{\qbin}[2]{ \left[ \! \! \! \begin{array}{c} #1 \\ #2 \end{array} \! \! \! \right] }

%groups

\newcommand{\Ga}[1]{\operatorname{G_{A_{#1}}}}
\newcommand{\Gb}[1]{\operatorname{G_{B_{#1}}}}
\newcommand{\Gbn}{\operatorname{G_{B_n}}}
\newcommand{\Br}{\operatorname{Br}}
\newcommand{\perm}{\mathfrak{S}}

%spaces

\newcommand{\disk}{\mathrm{D}}
\newcommand{\ddiskP}{\widetilde{\disk \setminus \P}}
\newcommand{\conf}{\operatorname{C}}
\newcommand{\confn}{\conf_n}
\newcommand{\confm}[1]{\operatorname{C_{1,#1}}}
\newcommand{\confmn}{\confm{n}}
\newcommand{\confmd}[1]{\widetilde{\operatorname{C_{1,#1}}}}
\newcommand{\confmdn}{\confmd{n}}

\newcommand{\sym}[1]{\operatorname{Sp({#1})}}

\newcommand{\totsp}{\mathrm{E}}

\renewcommand{\P}{\mathrm{P}}

\newcommand{\compl}{\disk \setminus }
\newcommand{\ddisk}{def}
\newcommand{\surf}{\Sigma}
\renewcommand{\ring}{R}

\newcommand{\st}{\Gamma}
\newcommand{\pmu}{{\pm 1}}

\newcommand{\DA}{\partial}
\newcommand{\ppartial}{\overline{\partial}}
\newcommand{\DB}{\ppartial}

\newcommand{\scsc}[2]{ \substack{
		{}_{#1}\\
		{}_{#2}
	} }

\newcommand{\gp}{\widetilde{\gamma}}
\newcommand{\gpu}{\widetilde{\gamma}_1}

\newcommand{\bool}[1]{\mathcal{P}[#1]}

\DeclareMathOperator{\gs}{\gamma}
\DeclareMathOperator{\gsu}{\gamma_1}
\DeclareMathOperator{\coker}{\mathrm{coker}}

\newcommand{\MMM}{\mathrm{M}}
\newcommand{\MMMp}{\widetilde{\MMM}}
\newcommand{\MM}[1]{\operatorname{\MMM}(#1)}
\newcommand{\MMp}[1]{\MMMp(#1)}
\newcommand{\Id}{\mathrm{Id}}

\newcommand{\stab}{\mathrm{st}}
\newcommand{\gstab}{\mathrm{gst}}

\newlength\Origarrayrulewidth

\newcommand{\chl}{\cellcolor{lightgray!30}}

\begin{document}

\title{%Homology of Hurwitz spaces\\
Homology of the family of hyperelliptic curves}

\author{Filippo Callegaro and Mario Salvetti}
\date{\today 
}
\address[F. Callegaro]{
Dipartimento di Matematica, University of Pisa, Italy.
}
\email{callegaro@dm.unipi.it}

\address[M. Salvetti]{Dipartimento di Matematica, University of Pisa, Italy.} 
\email{salvetti@dm.unipi.it}

\begin{abstract}
Homology of braid groups and Artin groups can be related to the study of spaces of curves.
We completely calculate the integral homology of the family of smooth curves of genus $g$ with one boundary component, that are double coverings of the disk ramified over $n = 2g + 1$ points.
The main part of such homology is described by the homology of the braid group with coefficients in a symplectic representation, namely the braid group $\Br_n$ acts on the first homology group of a genus $g$ surface via Dehn twists. Our computations shows that such groups have only $2$-torsion. We also investigate stabilization properties and provide Poincar\'e series, both for unstable and stable homology.
\end{abstract}

\maketitle

\section{Introduction}
 In this paper we consider %the particular case of 
 the family of hyperelliptic curves 
$$
\totsp_{n} := \{(\P, z,y ) \in \conf_n  \times \disk \times \C| y^2 = (z-x_1)\cdots(z-x_n) \}.
$$ 
where $\disk$ is the unit open disk in $\C, $ \  $\conf_n$ is the configuration space of $n$ distinct unordered points in $\disk$ and $\P=\{x_1,\dots,x_n\}\in \conf_n.$  
Each curve $\surf_n$ in the family is a ramified double covering of the disk $\disk$ and there is a fibration  $\pi:\totsp_n\to \conf_n$ which takes  $\surf_n$ onto its set of ramification points. Clearly $\totsp_n$ is a universal family over the Hurwitz space $H^{n,2}$ (for precise definitions see \cite{fulton}, \cite{evw}).
%Even though Hurwitz spaces are very classical and studied objects, it seems that their cohomology is not completely known in general. 

The aim of this paper is to compute the integral homology of the space $\totsp_n.$ 
The rational homology of $\totsp_n$ is known, having been computed in \cite{chen} by using \cite{cms_tams}.
The bundle $\pi:\totsp_n\to \conf_n$ has a global section, so $H_*(E_n)$ splits into a direct sum $H_*(\conf_n)\oplus H_*(\totsp_n,\conf_n)$ and by the Serre spectral sequence $H_*(\totsp_n,\conf_n)=H_{*-1}(\Br_n;H_1(\surf_n)).$ 
We use here that $\conf_n$ is a classifying space for the braid group $\Br_n.$ The action of the braid group over the homology of the surface is geometrical: the braid group embeds (see \cite{per_van_92, waj_99}) into the mapping class group of the surface (with one or two boundary components according to $n$ odd or even respectively) by taking the standard generators into particular Dehn twists.
In this paper we actually compute the homology of this symplectic representation of the braid groups. Since the homology of the braid groups is well-know (see for example \cite{fuks, vain, cohen}) we obtain a description of the homology of $\totsp_n.$ 
It would be natural to extend the computation to the homology of the braid group $\Br_n$ with coefficients in the symmetric powers of  $H_1(\surf_n)$. In the case of $n=3$ a complete computation (in cohomology) can be found in \cite{ccs}.

Some experimental computations given in \cite{msv2012} have led us to conjecture that $H_{*}(\Br_n; H_1(\surf_n;\Z))$  is only $2$-torsion for odd $n.$ 

Our main results are the following 

\begin{thm} (see Theorem \ref{th:no4tor}, \ref{thm:poincare})

For odd $n:$
\begin{enumerate}
\item  the integral homology $H_{i}(\Br_n; H_1(\surf_n;\Z))$ has only $2$-torsion. 
\item the rank of $H_i(\Br_n;H_1(\surf_n;\Z))$ as a $\Z_2$-module is the coefficient of $q^it^n$ in the series
 $$
\widetilde{P}_2(q,t)=\frac{qt^3}{(1-t^2q^2)} \prod_{i \geq 0} \frac{1}{1-q^{2^i-1}t^{2^i}}
$$
In particular the series $\widetilde{P}_2(q,t)$ is 
the Poincar\'e series of the homology group  %$$\oplus_{n}H_*(\Br_{2n+1};H_1(\surf_{2n+1};\Z))$$ 
$$\bigoplus_{n \mbox{\scriptsize odd}}H_*(\Br_{n};H_1(\surf_n;\Z))$$
as a $\Z_2$-module. 
\end{enumerate}
\end{thm}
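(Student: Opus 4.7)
The plan is to reduce the computation of $H_*(\Br_n; H_1(\surf_n; \Z))$ to the homology of braid groups with much simpler coefficients, and then to control separately the $2$-primary torsion and the $\Z_2$-rank.

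First, I would identify the $\Br_n$-module $H_1(\surf_n; \Z)$, via the hyperelliptic embedding of Perron--Vannier and Wajnryb into the mapping class group, with the reduced Burau representation of $\Br_n$ specialized at $t=-1$. For odd $n$ this module fits in a short exact sequence of $\Br_n$-modules
\begin{equation*}
0 \to H_1(\surf_n; \Z) \to M \to N \to 0,
\end{equation*}
where $M$ is a ``signed permutation'' representation induced from a sign character along an inclusion $\Br_{n-1} \hookrightarrow \Br_n$, and $N$ is (a shift of) the sign representation $\Z_{\pm}$ pulled back from $\perm_n$. Shapiro's lemma identifies $H_*(\Br_n; M)$ with $H_*(\Br_{n-1}; \Z_{\pm})$, and the homology of braid groups with sign coefficients is classical and directly accessible from the Salvetti complex. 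The long exact sequence attached to the short exact sequence above then reduces the whole calculation to a single connecting homomorphism between two explicit bigraded groups.

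Second, to prove part~(1), I would work one prime at a time. Over $\Z[1/2]$ the sign character of $\perm_n$ splits off via an averaging idempotent, so the standard transfer along the surjection $\Br_n \to \perm_n$ kills $H_*(\Br_n; M \otimes \Z[1/2])$ and $H_*(\Br_n; N \otimes \Z[1/2])$ in the relevant range. This forces $H_*(\Br_n; H_1(\surf_n; \Z)) \otimes \Z[1/2] = 0$, so the homology is both torsion and $2$-primary. To upgrade ``$2$-primary'' to ``only $\Z/2$'', I would analyse the integral Bockstein spectral sequence associated with $0 \to \Z \xrightarrow{\cdot 2} \Z \to \Z_2 \to 0$: every mod-$2$ class produced in the computation for part~(3) must be shown to be already hit by the first Bockstein, so that no $\Z/4$ summand can survive in the integral homology.

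Third, for the Poincar\'e series, I would compute $H_*(\Br_n; H_1(\surf_n; \Z_2))$ at chain level. The factor $\prod_{i \ge 0}(1-q^{2^i-1}t^{2^i})^{-1}$ is, up to the $i=0$ term $(1-t)^{-1}$, Fuks's Poincar\'e series for $\bigoplus_n H_*(\Br_n; \Z_2)$; it appears here because mod~$2$ the coefficient module $H_1(\surf_n; \Z_2)$ admits a filtration whose graded pieces carry (shifted) trivial braid action. The prefactor $qt^3/(1-q^2t^2)$ records the homological shift and collects the contribution of the connecting map in the long exact sequence. Combined with part~(1), the mod-$2$ Betti numbers are identified with the $\Z_2$-ranks of the integral homology, yielding the claimed $\widetilde{P}_2(q,t)$. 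The main obstacle throughout is ruling out $\Z/4$-summands: the extension between $M$ and $N$ allows such extensions to appear a priori in the long exact sequence, and overcoming this requires constructing explicit integral chains realising every mod-$2$ class, for which the hyperelliptic involution is the essential geometric input.
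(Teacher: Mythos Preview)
Your approach diverges from the paper's and has genuine gaps. The short exact sequence you posit, with $M$ induced from a sign character along $\Br_{n-1}\hookrightarrow\Br_n$, is problematic: that inclusion has infinite index, so algebraic induction does not yield a finite-rank module and Shapiro's lemma cannot identify $H_*(\Br_n;M)$ with $H_*(\Br_{n-1};\Z_\pm)$ as you claim. The paper instead uses the Mayer-Vietoris decomposition of $\surf_n$ to obtain
\[
0 \longrightarrow H_1(S^1\times P) \longrightarrow H_1(\ddiskP) \longrightarrow H_1(\surf_n) \longrightarrow 0
\]
(so $H_1(\surf_n)$ appears as a \emph{quotient}, not a subobject), and identifies the terms of the resulting long exact sequence with homology of the \emph{type-$B$ Artin groups} $\Gb{n-1}$ and $\Gb{n}$ with coefficients $\Z[t]/(1+t)$ and $\Z[t]/(1-t^2)$, via the finite covers $\confm{n-1}\to\conf_n$ and $\confmd{n}\to\confm{n}$.

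Your transfer argument for the vanishing of odd torsion also does not work as stated: transfer along $\Br_n\to\perm_n$ requires inverting $|\perm_n|=n!$, not just $2$, so it cannot by itself kill $p$-torsion for odd primes $p\le n$, and no ``averaging idempotent'' over $\Z[1/2]$ repairs this. The paper's mechanism is entirely different and specific to odd $n$: for a field $\F$ of characteristic $p\neq 2$ the module $H_*(\Gb{n};\F[t^{\pm1}])$ has only $(1+t)$-torsion (Proposition~\ref{prop:homol_pdisp}), which forces the transfer map $\tau$ (multiplication by $1-t$) to be invertible; hence the comparison map $\iota=J\circ\tau\circ\mu_*$ in the long exact sequence is an isomorphism and $H_*(\Br_n;H_1(\surf_n;\F))=0$. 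Excluding $4$-torsion then occupies two further sections: an explicit Bockstein computation on the generators of $H_*(\Gb{n};\F_2[t]/(1-t^2))$ bounds the torsion order to $4$, and a dimension count comparing the ranks of $\ker\iota$ and $\coker\iota$ over $\Z$ versus $\F_2$ eliminates it. Your Bockstein sketch points in the right direction for that last step, but the substantive input --- and the source of the Poincar\'e series in part~(2) --- is the explicit description of $\iota$ on the $\F_2$-bases of Proposition~\ref{prop:generators}, not a filtration of $H_1(\surf_n;\Z_2)$ by trivial $\Br_n$-modules.
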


\begin{thm} (see Theorem \ref{thm:stabilization}, \ref{thm:stablepoincare})
Consider homology with integer coefficients. 
\begin{enumerate}
\item The homomorphism 
$$
H_i(\Br_n; H_1(\surf_n)) \to H_i(\Br_{n+1}; H_1(\surf_{n+1}))
$$
is an epimorphism for $i \leq \frac{n}{2}-1 $
and an isomorphism for $i < \frac{n}{2}-1$.

\item For $n$ even $H_i(\Br_n; H_1(\surf_n))$ has no $p$ torsion (for $p > 2$) when $\frac{pi}{p-1}+3 \leq n$ and no free part for $i+3 \leq n$. In particular for $n$ even,  when $\frac{3i}{2}+3 \leq n$ the group $H_i(\Br_n; H_1(\surf_n))$ has only $2$-torsion.
\item 
The Poincar\'e polynomial of the stable homology $H_i(\Br_n;H_1(\surf_n;\Z))$ as a $\Z_2$-module is the following:
$$
P_2(\Br;H_1(\Sigma))(q) = \frac{q}{1-q^2} \prod_{j \geq 1} \frac{1}{1-q^{2^j-1}}
$$
\end{enumerate}
\end{thm}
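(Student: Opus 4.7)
The three parts can be addressed in order, with parts (2) and (3) drawing on part (1) and on Theorem A (the first theorem above).

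For part (1), the plan is to compare the chain complexes for consecutive values of $n$ along the natural stabilization maps: the inclusion $\Br_n \hookrightarrow \Br_{n+1}$ (adding a strand) together with a compatible $\Br_n$-equivariant inclusion $H_1(\surf_n) \hookrightarrow H_1(\surf_{n+1})$ coming from enlarging the surface. Taking the short exact sequence of these coefficient modules and its induced long exact sequence in $\Br_{n+1}$-homology, I would identify the relative term via Shapiro's lemma with a homology group of a smaller subgroup with simpler (e.g., constant or rank-one) coefficients, and then invoke the classical Arnold stability range for $H_*(\Br_n; \Z)$ to force vanishing in the prescribed degrees. The range $i < n/2 - 1$ should fall out of this comparison, with the shift by one reflecting the degree of the new coefficient class introduced at each stabilization step.

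For part (2), the key tool is the Serre spectral sequence of $\pi:\totsp_n \to \conf_n$, whose fiber $\surf_n$ has homology concentrated in degrees $0$ and $1$, giving (after splitting off the section-induced summand) a long exact sequence
\begin{equation*}
\cdots \to H_{i+1}(\totsp_n) \to H_{i+1}(\Br_n) \to H_i(\Br_n; H_1(\surf_n)) \to H_i(\totsp_n) \to H_i(\Br_n) \to \cdots.
\end{equation*}
Cohen's torsion bounds for $H_*(\Br_n;\Z)$ from \cite{cohen}, combined with parallel bounds for $H_*(\totsp_n)$ deduced from the same spectral sequence, pin down the $p$-torsion of $H_i(\Br_n; H_1(\surf_n))$ and yield the stated $p$-torsion vanishing range $\frac{pi}{p-1}+3\leq n$. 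The \emph{no free part} assertion for $i+3\leq n$ comes from the rational computation of \cite{chen}, which forces $H_i(\Br_n; H_1(\surf_n;\Q))$ to vanish in this range. Intersecting the two conclusions at the worst odd prime $p=3$ gives the ``only $2$-torsion'' statement in the range $\frac{3i}{2}+3\leq n$.

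For part (3), the stable Poincar\'e polynomial is extracted from the formula of Theorem A by a simple limit argument. Factoring out the $j=0$ term of the infinite product gives
\begin{equation*}
\widetilde{P}_2(q,t) = \frac{1}{1-t} \cdot f(q,t), \qquad f(q,t) = \frac{qt^3}{1-t^2q^2} \prod_{j\geq 1}\frac{1}{1-q^{2^j-1}t^{2^j}},
\end{equation*}
so the coefficient of $t^n$ in $\widetilde{P}_2(q,t)$ is the partial sum $\sum_{k\leq n}[t^k]f(q,t)$, which converges in each $q$-degree as $n\to\infty$ (through odd values) to $f(q,1)=\frac{q}{1-q^2}\prod_{j\geq 1}\frac{1}{1-q^{2^j-1}}$, matching the stated stable Poincar\'e polynomial; part (1) guarantees that this limit genuinely computes stable homology. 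The main obstacle is expected to be part (2): the long exact sequence above is a clean device, but matching Cohen's torsion thresholds for the outer terms $H_*(\Br_n;\Z)$ and $H_*(\totsp_n)$ with the precise ranges stated requires careful bookkeeping, since the bounds on $\totsp_n$ must themselves be derived in tandem and one may need some control over the connecting homomorphisms beyond mere range comparisons.
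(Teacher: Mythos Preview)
Your approach to part~(3) is correct and is essentially a formal repackaging of the paper's computation: both extract the stable series from the unstable one, the paper by reading off stable $\Z_2$-ranks of $\ker\iota$ and $\coker\iota$ directly, you by setting $t=1$ after stripping the $j=0$ factor.

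Parts~(1) and~(2), however, have genuine gaps.

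For part~(1), your sketch is in the spirit of the Wahl--Randal-Williams machinery, but the phrase ``long exact sequence in $\Br_{n+1}$-homology'' does not parse: $H_1(\surf_n)$ is a $\Br_n$-module, not a $\Br_{n+1}$-module, so there is no short exact sequence of $\Br_{n+1}$-modules to feed into such a long exact sequence. One can run a two-step comparison (change coefficients over $\Br_n$, then change the group), and this is what \cite{W-RW} and \cite{bianchi} do; the paper records that this yields isomorphism only for $i\leq\frac{n}{2}-2$, slightly weaker (for odd $n$) than what is claimed. The paper's route is different: it uses the Mayer--Vietoris exact sequence~\eqref{eq:bia2}, which sandwiches $H_*(\Br_n;H_1(\surf_n))$ between $H_*(\confm{n-1})$ and $H_*(\confmd{n},\conf_n)$. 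These are homology groups of type-$\mathrm{B}$ Artin groups with abelian local coefficients, for which sharp stabilization ranges over each $\F_p$ are available from \cite{calmar} (Proposition~\ref{prop:stab_rank}); commutativity with the stabilization maps (Lemma~\ref{lem:commutative_stab}) and the five lemma then give the stated range.

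For part~(2), your argument is circular. The fibration $\pi:\totsp_n\to\conf_n$ has a section (Remark~\ref{rem:globalsection}), so its Serre spectral sequence degenerates and your long exact sequence splits as $H_*(\totsp_n)\cong H_*(\Br_n)\oplus H_{*-1}(\Br_n;H_1(\surf_n))$. Any torsion bound on $H_*(\totsp_n)$ ``deduced from the same spectral sequence'' is therefore equivalent to the bound on $H_*(\Br_n;H_1(\surf_n))$ you are trying to establish; there is no independent input. The paper instead obtains part~(2) as an immediate corollary of part~(1) together with Theorem~A: the proof of part~(1) actually establishes stabilization over each $\F_p$ separately, with the better range $\frac{pi}{p-1}+2<n$ for $p>2$ (and $i+2<n$ for $p=0$). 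For even $n$ in that range, $H_i(\Br_n;H_1(\surf_n;\F_p))$ agrees with the corresponding group at the odd integer $n+1$, which vanishes for $p\neq 2$ by Theorem~A. (Your citation of \cite{chen} for the rational vanishing is a legitimate shortcut for the ``no free part'' clause, but does not help with the $p$-torsion clause.)
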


We also find unstable free components in the top and top-1 dimension for even $n$ (Theorem \ref{thm:unstable}). 

The main tools that we use are the following. 

First, we use here some of the geometrical ideas in \cite{bianchi}, where  the author shows that the $H_*(\Br_n;H_1(\surf_n))$ is at most $4$-torsion using some exact sequences obtained from a Mayer-Vietoris decomposition. 

Second, we identify the homology groups which appear in the exact sequences with local homology groups of the configuration space $\confm{n}$ of $n+1$ points with one distinguished point. Such spaces are the classifying space of the  Artin groups of type $\mathrm{B},$ so we can use some of  the homology computations given in \cite{calmar}: our results heavily rely on these computations and we collect most of those we need in Section \ref{sec:homol_artin}.

Some explicit computations are provided in Table \ref{tab:conti}.
%\todo{Inserire risultato finale e teoremi di sezione \ref{sec:stab}}
%\end{document}

\setcounter{tocdepth}{1}
\tableofcontents

\section{Notations and preliminary definitions}

Along all this paper, when not specified, the homology is understood to be computed with constant coefficients over a ring $\ring$. 
Let $p$ be a prime or $0$ and let $\F$ be a field of characteristic $p$. 
We write $\md$ for the $\ring$-module of Laurent series $\ring[t, t^{-1}] $. 

%We assume $n$ to be an odd integer.

Let $\Ga{n-1}  = \Br_n$ be the classical braid group on $n$ strands and let $\Gb{n}$ be the 
Artin group of type $\mathrm{B}$.
%$\Gb{n} = \Gbn$.

We write $\confn$ for the configuration space of $n$ unordered points in the unitary disk  $\disk:= \{z \in \C | \, |z|  < 1\}$.
% $\conf_n = \confn$, 
A generic element of $\confn$ is an unordered set of $n$ distinct points $\P = \{x_1, \ldots, x_n\} \subset \disk$. In particular $\conf_1 = \disk$. The fundamental group of $\conf_n$ is the classical braid group on $n$ strands $\Ga{n-1}  = \Br_n$ and we recall that the space $\conf_n$ is a $K(\Br_n,1)$ (see \cite{fa_neu_62}).

Given an element $\P \in \confn$, we can consider the set of points $$\surf_n:= \{(z,y) \in  \disk \times \C | y^2 = (z-x_1)\cdots(z-x_n) \}.$$ This is a connected oriented surface with one boundary component for $n$ odd and with two boundary components if $n$ is even. The genus of $\surf_n$ is $g = \frac{n-1}{2}$ for odd $n$ and $g = \frac{n-2}{2}$ for $n$ even.

Hence we define the space
$$
\totsp_n := \{(\P, z,y ) \in \conf_n  \times \disk \times \C| y^2 = (z-x_1)\cdots(z-x_n) \}.
$$
Notice that $E_n$ has a natural projection $\pi:E_n\to \conf_n$ that maps $(\P, y, z) \mapsto \P$. The fiber of $\pi$ is the surface $\surf_n$.

It is natural to consider the complement of the $n$-points set in the disk: $\disk \setminus \P$. We have that $H_1(\disk \setminus \P)$ has rank $n$. The surface $\surf_n$ is a double covering of $\disk$ ramified along $\P$, hence it is natural to identify $\P$ as a subset of $\surf_n$. 
We define $\ddiskP:= \surf_n \setminus \P$ as the double covering of $\disk \setminus \P$ induced by $\surf_n \to \disk$.
Notice that for $n$ odd $H_1(\ddiskP)$ has rank $2n-1$. 

There is a projection 
$\totsp_n \stackrel{p}{\longrightarrow} \conf_n \times \disk 
$ given by $p:(\P,z,y) \mapsto (\P, z)$.
Hence $\totsp_n$ is a double covering of $\conf_n \times \disk$ ramified along the space $\confm{n-1} := \{ (\P, z) \in \conf_n \times \disk | z \in \P\}$. This is the configuration space of $n-1$ unordered distinct points in $\disk$ with one additional distinct marked point. In particular the complement of
$\confm{n-1} \subset \conf_n \times \disk$ is $\confm{n}$, so the complement of
$p^{-1}(\confm{n-1})$ in $\totsp_n$ is a double covering of $\confm{n}$ that we call $\confmd{n}$.
The fundamental group of $\confm{n}$ is the Artin groups $\Gb{n}$. Moreover (see for example \cite{bri_73}) the space $\confm{n}$ is a $K(\Gbn,1)$.

%Ramified double covering of $\C \setminus \P$: $\surf_n$. Notice that $\surf_n$ is an open surface of genus $g= (n-1)/2$ and one boundary component and $H_1(\surf_n)$ has rank $2g = n-1$.

%Configurations space of $n$ points in $\disk$ with an additional market point: $\confm{n} = \confmn$.

%Double covering of $\confmn: \confmd{n} = \confmdn$.

%Double covering of $\confmn$ with ramification points: $\totsp_n$.

%Fibration: $\surf_n \to \totsp_n \stackrel{\pi}{\to} \conf_n$. 
\begin{rem}\label{rem:globalsection}
Notice that the covering $\surf_n \into \totsp_n \stackrel{\pi}{\to} \conf_n$ admits a global section (see Definition \ref{def:section}) and hence $H_*(\totsp_n) = H_*(\totsp_n,\conf_n) \oplus H_*(\conf_n)$ and $$H_i(\totsp_n, \conf_n) = H_{i-1}(\conf_n; H_1(\surf_n)).$$
\end{rem}

We recall that the $\Z$-module $H_1(\surf_n)$  is endowed with a symplectic form given by the cap product. Moreover the action of $\pi_1(\conf_n)$ on $H_1(\surf_n)$  associated to the covering $\pi$ preserves this form. This monodromy representation is induced by the embedding of the braid group  $\pi_1(\conf_n)$  in the mapping class group of the surface $\surf_n$. Such a monodromy representation maps the standard genyerators of the braid groups to Dehn twists and is called geometric monodromy (see \cite{per_van_92, waj_99}).
Hence we can consider $H_1(\surf_n)$ as a $\pi_1(\conf_n)= \Br_n$-representation; we write also $\sym{g} :=H_1(\surf_n)$, where $g =\frac{n-1}{2}$ for $n$ odd, and $g = \frac{n-2}{2}$ for $n$ even.

The braid group $\Br_n = \Ga{n-1}$ maps on the permutation group $\perm_n$ on $n$ letters. Hence the group $\Br_n $ has a natural representation on $\Z^n$ by permuting cohordinates. We write $\st_n$ for this representation of $\Br_n$.

\section{Homology of some Artin groups}\label{sec:homol_artin}

We collect here some of the results
% from \cite{calmar} 
concerning the homology of $\Ga{n}$ and $\Gb{n}$ with constant coefficients and with coefficients in abelian local systems.
We follow the notation used in \cite{calmar}. 

Given an element $x \in \{0,1\}^n$ we can write it as a list of $0$'s and $1$'s. We identify such an element $x$ with a string of $0$'s and $1$'s.

Recall the definition of the following $q$-analog and $q,t$-analog polynomials with integer coefficients:
$$ 
[0]_q :=1, \qquad [m]_q := 1 + q + \cdots + q^{m-1} = \frac{q^m-1}{q-1} \mbox{ for }m\geq 1,
$$
$$
[m]_q! := \prod_{i=1}^m [m]_q, \qquad [2m]_{q,t} := [m]_q (1+tq^{m-1}),
$$
$$
[2m]_{q,t}!! := \prod_{i=1}^m [2i]_{q,t}\ =\ [ m ]_q!
\prod_{i=0}^{m-1} (1+tq^i),
$$
$$
\qbin{m}{i}_{q}\!\!: = \frac{[m]_q!}{[i]_q!
	[m-i]_q!},
%$$
\qquad
%$$
\qbin{m}{i}_{q,t}'\!\!: = \frac{[2m]_{q,t}!!}{[2i]_{q,t}!! [m-i]_q!} \ =
\qbin{m}{i}_{q}\prod_{j=i}^{m-1}(1+tq^j).
$$
In the following we specialize $q = -1$ and we write $\qbin{m}{i}_{-1}'$ for $\qbin{m}{i}_{-1,t}'$.

%The Artin group $\Ga{n}$ has a natural action on the module $\md$  by mapping each standard generator to multiplication by $(-t)$.

The homology of the Artin group of type $\Ga{n}$ with constant coefficients over the module $M$ is computed by the following complex $(C_i(\Ga{n}, \md), \partial)$.
\begin{df}
$$
C_i(\Ga{n}, \md) := \bigoplus_{|x| = i} \md.x 
$$
where the boundary is defined by:
$$
\partial 1^l = \sum_{h=0}^l-1 (-1)^h \qbin{l+1}{h+1}_{-1} 1^{h}01^{l-h-1}
$$
and if $A$ and $B$ are two strings
$$
\partial A0B = (\partial A)0B + (-1)^{|A|} A0 \partial B.
$$
\end{df}

Assume that the group $\Gb{n}$ acts on the module $\md = \ring[t^\pmu]$ mapping the first standard generator to multiplication by $(-t)$ and all other generators to multiplication by $1$. Then the homology of $\Gb{n}$ with coefficients on $\md$ is computed by the following complex $(C_*(\Gb{n}, \md),\DB) $.
\begin{df}\label{def:complesso}
%Let $\md$ be a $\Z[t^\pmu]$-module such that the first standard generator acts by multiplication by $(-t)$ and all other generators to multiplication by $1$. 
The complex $C_*(\Gb{n}, \md)$ is given by:
$$
C_i(\Gb{n}, \md) := \bigoplus_{|x| = i} \md.x 
$$
where $\md.x$ is a copy of the module $\md$ generated by an element $x$ and $x \in \{0,1\}^n$ is a list of length $n$ and $|x| :=  |\{i \in 1, \ldots, n \mid x_i = 1\}|$.

When we represent an element $x$ that generates $C_*(\Gb{n}, \md)$ as a string of $0$'s and $1$'s, we put a line over the first element of the string, since it plays a special role, different from that of the complex $C_*(\Ga{n}, \md)$.

The boundary $\DB x$ for $C_*(\Gb{n}, \md)$ is defined by linearity from the following relations:
$$
\DB \overline{0}A = \overline{0}\DA A,
$$
$$
\DB \; \overline{1}1^{l-1} = 
\qbin{l}{0}_{-1}' \overline{0}1^{l-1} + \sum_{h=1}^{l-1} (-1)^{h}\qbin{l}{h}_{-1}' 
\overline{1}1^{h-1}01^{l-h-1}
$$
and
$$
\DB A0B = (\DB A)0B + (-1)^{|A|}A0\DA B.
$$
\end{df}

Hence we have:
\begin{thm}[\cite{salvetti}]
$$
H_*(\Ga{n}; \md) = H_*(C_*(\Ga{n}, \md), \DA)
$$
$$
H_*(\Gb{n}; \md) = H_*(C_*(\Gb{n}, \md), \DB)
$$
\end{thm}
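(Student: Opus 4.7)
My plan is to deduce both statements from Salvetti's construction of a finite regular CW-model for the classifying space of a finite type Artin group. By Deligne's theorem, for any finite Coxeter system $(W, S)$ the quotient $Y_W = (V \otimes \C \setminus \mathcal{A})/W$ of the complement of the complexified reflection arrangement is a $K(G_W, 1)$. Salvetti exhibited a $W$-equivariant deformation retract onto a regular CW-complex whose cells, after taking the $W$-quotient, are in bijection with the subsets $T \subseteq S$, the cell attached to $T$ having dimension $|T|$. In our two cases, $W$ of type $A_{n-1}$ or $B_n$, this produces exactly $2^n$ cells, naturally indexed by strings $x \in \{0, 1\}^n$, which accounts for the $\md$-module structure of $C_i(\Ga{n}, \md)$ and $C_i(\Gb{n}, \md)$ in the statement.

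To pin down the differentials I would invoke the Salvetti--De Concini formula for the boundary of this complex twisted by a $1$-dimensional representation. The component of the boundary from the cell labelled by $T$ to the cell labelled by $T \setminus \{s\}$ is the signed sum, over minimal length coset representatives of $W_T / W_{T \setminus \{s\}}$, of the character values on these elements. When the local system is given by a character $\chi$, this sum assembles into the Poincar\'e polynomial of the parabolic quotient $W_T / W_{T \setminus \{s\}}$ evaluated at the scalars $\chi(s_i)$. For type A with all generators acting by $-1$, the parabolic subgroups of $\perm_n$ are products of symmetric groups, and their Poincar\'e polynomials specialised at $q = -1$ reproduce the Gaussian binomials $\qbin{l+1}{h+1}_{-1}$ appearing in $\DA 1^l$. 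The Leibniz-type rule $\DA A 0 B = (\DA A) 0 B + (-1)^{|A|} A 0 \DA B$ then records the fact that a $0$ between two consecutive blocks corresponds to a direct product decomposition of the parabolic subgroup, with the sign coming from the Koszul rule on cells.

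For type B the same scheme applies, except that the first simple reflection acts by $-t$ rather than $-1$. Parabolic subgroups of $W_{B_n}$ containing this special reflection have Poincar\'e polynomials in two variables given by the expressions $[2m]_{q, t}!!$, whereas those not containing it remain of type A and contribute $[m]_q!$. Specialising $q = -1$ and tracking the $t$-twist produces the coefficients $\qbin{l}{h}'_{-1}$ of $\overline{1} 1^{h-1} 0 1^{l-h-1}$ and $\qbin{l}{0}'_{-1}$ of $\overline{0} 1^{l-1}$ inside $\DB\, \overline{1} 1^{l-1}$. The rule $\DB \overline{0} A = \overline{0}\, \DA A$ simply records that, once the special first reflection is absent from the cell, the Salvetti boundary reduces to the type A one on the remainder of the string.

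The main obstacle is the careful matching of the combinatorics: keeping track of signs, of which cells carry the twisted action, and of the way the minimal coset representatives of $W_{B_n}$ split according to whether they involve the special first reflection. These computations are standard but not trivial, especially the verification that the two-variable Poincar\'e polynomials of the mixed parabolics in type B specialise exactly to $\qbin{l}{h}'_{-1}$. Once both boundary formulas are reconciled with Definition \ref{def:complesso}, the theorem follows at once, since the Salvetti complex is an honest CW-structure on a $K(G_W, 1)$ and therefore computes $H_*(G_W; \md)$ with any local coefficients.
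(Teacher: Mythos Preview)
The paper does not actually prove this theorem: it is stated with the citation \cite{salvetti} and no proof is given, so there is nothing to compare your argument against in the paper itself. Your outline is precisely the argument one finds in the cited literature (Salvetti's CW model for the orbit space, Deligne's $K(\pi,1)$ theorem, and the De~Concini--Salvetti description of the twisted cellular boundary via Poincar\'e polynomials of parabolic quotients), so the approach is correct and standard.

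One small slip: you write ``$W$ of type $A_{n-1}$ or $B_n$'', but the statement concerns $\Ga{n}$, which is the Artin group of type $A_n$ with $n$ generators (the paper uses $\Ga{n-1}=\Br_n$). Type $A_{n-1}$ would give only $2^{n-1}$ cells, not the $2^n$ you claim. With the indexing corrected to $A_n$ and $B_n$, both Coxeter systems have $n$ simple reflections, the quotient Salvetti complex has exactly $2^n$ cells indexed by strings in $\{0,1\}^n$, and your account of the boundaries goes through as written.
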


Let $\Br(n) = \Ga{n-1}$ be the classical Artin braid group in $n$ strands. We recall the description of the homology of these groups
according to the results of \cite{cohen, fuks, vain}. 
We shall adopt a notation coherent with \cite{dps} (see also \cite{cal06}) for the description of 
the algebraic complex and the generators. Let $\F$ be a field.
The direct sum of the homology of $\Br(n)$ for $n \in \N = \Z_{\geq 0}$ is considered as a bigraded ring $\oplus_{d,n} H_d(\Br(n), \F)$ 
where the product structure $$H_{d_1}(\Br(n_1), \F) \times H_{d_2}(\Br(n_2), \F) \to H_{d_1+d_2}(\Br(n_1+n_2), \F)$$
is induced by the map $\Br(n_1) \times \Br(n_2) \to \Br(n_1 + n_2)$ that juxtaposes braids (see \cite{cohen_braids, cal06}).

\subsection{Braid homology over $\Q$}\label{ss:braid_homol}

The homology of the braid group with rational coefficients has a very simple description:
$$
H_d(\Br(n), \Q)= \left( \Q[x_0, x_1]/(x_1^2)\right)_{\deg = n, \dim= d}
$$
where $\deg x_i = i+1 $ and $\dim x_i = i$.
In the Salvetti complex for the classical braid group (see \cite{salvetti, dps} ) the element $x_0$ is represented by the string 0 and $x_1$ is 
represented by the string 10. In the representation of a monomial $x_0^ax_{1}^b$ we drop the last 0.

For example the generator of $H_1(\Br(4),\Q)$ is the monomial $x_0^2x_1$ and we can also write it as a string in the form $001$ (instead of $0010$, dropping the last $0$).

We denote by $A(\Q)$ the module $\Q[x_0, x_1]/(x_1^2)[t^\pmu]$.

\subsection{Braid homology over $\F_2$}

With coefficients in $\F_2$ we have:

$$
H_d(\Br(n), \F_2)= \F_2[x_0, x_1, x_2, x_3, \ldots]_{\deg = n, \dim= d}
$$
where the generator $x_i, i \in \N,$ has degree $\deg x_i = 2^i$ and homological dimension 
$\dim x_i = 2^i-1$.

In the Salvetti complex the element $x_i$ is represented by a string of $2^i -1$ 1's 
followed by one 0. In the representation of a monomial $x_{i_1}\cdots x_{i_k}$ we drop the last 0.

We denote by $A(\F_2)$ the module $\F_2[x_0, x_1, x_2, x_3, \cdots][t^\pmu]$.

\subsection{Braid homology over $\F_p$, $p>2$}

With coefficients in $\F_p$, with $p$ an odd prime, we have:
$$
H_d(\Br(n), \F_p)= \left( \F_p[h, y_1, y_2, y_3, \ldots] \otimes \Lambda[x_0, x_1, x_2, x_3, \ldots] 
\right)_{\deg = n, \dim= d}
$$
where the second factor in the tensor product is the exterior algebra over the field $\F_p$ 
with generators $x_i, i \in\N$. 
The generator $h$ has degree $\deg h = 1$ and homological dimension $\dim h=0$.
The generator $y_i, i \in \N$ has degree $\deg y_i = 2p^i$ and homological dimension 
$\dim y_i = 2p^i-2$.
The generator $x_i, i \in \N$ has degree $\deg x_i = 2p^i$ and homological dimension 
$\dim x_i = 2p^i-1$.

In the Salvetti complex the element $h$ is represented by the string $0$, the element $x_i$ is 
represented by a string of $2p^i -1$ $1$'s followed by one $0$. %

We remark that the term $\DA(x_i)$ is divisible by $p$. In fact, with generic coefficients (see \cite{cal06}), the differential $\DA(x_i)$ is given by a sum of terms with coefficients all divisible by the cyclotomic polynomial $\varphi_{2p^i}(q)$. Specializing to the trivial local system, with integer coefficients we have that all terms are divisible by $\varphi_{2p^i}(-1) = p$.

The element $y_i$ is represented by the following term (the differential is computed over 
the integers and then, 
after dividing by $p$, we  consider the result modulo $p$):
$$
\frac{\DA(x_i)}{p}.
$$

\subsection{Homology of the Artin group of type $B$}
Now let us recall some results on the homology of the groups $\Gb{n}$ with coefficients on the module $\F[t^\pmu]$, where the first standard generator of  $\Gb{n}$ acts with multiplication by $(-t)$ and all the others generators acts with multiplication by $1$.

In order to describe the elements of $C_*(\Gb{n};\md)$ we write $z_i$ for $\overline{1}1^{i-1}0$. Hence, if $x$ is a generator of $C_*(\Gb{n};\md)$ of the form $x = \overline{1}1^{i-1}0y$, then we write $x = z_iy$.

From \cite[Sec.~4.2]{calmar} we have
\begin{prop} \label{prop:hrazionale}
Let  $\F$ be a field of characteristic $p=0$. The $\F[t]$-module $\oplus_{i,n} H_i(\Gb{n}; \F[t^{\pmu}])$ has a basis given by
$$
\frac{\DB (z_{2i+1}x_0^{j-1})}{1+t},
$$
$$
\frac{\DB (z_{2i+1}x_0^{j-1}x_1)}{1+t},
$$
and
$$
\frac{\DB (z_{2i+2})}{1-t^2},
$$
where the first and the second kind of generators have torsion of order $(1+t)$, while the third kind of generators have torsion of order $(1-t^2)$.
\end{prop}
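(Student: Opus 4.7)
My plan is to build the three families of generators directly in the Salvetti complex $(C_*(\Gb{n},\md),\DB)$ of Definition~\ref{def:complesso}, then compare the total rank of the resulting classes against $H_*(\Gb{n};\md)$ to deduce exhaustion and sharpness of the torsion.

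The construction hinges on the identity
$$
\DB(z_k) = \qbin{k}{0}'_{-1,t}\,\overline{0}1^{k-1}0 + \sum_{h=1}^{k-1}(-1)^h \qbin{k}{h}'_{-1,t}\,\overline{1}1^{h-1}01^{k-h-1}0,
$$
read off from Definition~\ref{def:complesso}, together with the factorization $\qbin{k}{h}'_{-1,t} = \qbin{k}{h}_{-1}\prod_{j=h}^{k-1}(1+t(-1)^j)$. A brief parity analysis yields: for $k = 2i+1$, every coefficient is divisible by $(1+t)$ because the range $[h,k-1]$ always contains the even index $2i$; for $k = 2i+2$, the binomial $\qbin{k}{h}_{-1}$ vanishes for odd $h$, while for even $h$ the product range spans both parities and supplies a factor $(1+t)(1-t) = (1-t^2)$. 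Combined with the product rule $\DB(A0B) = (\DB A)0B + (-1)^{|A|}A0\DA B$ and with $\DA(x_0^{j-1}) = 0 = \DA(x_0^{j-1}x_1)$ (Subsection~\ref{ss:braid_homol}), this shows that $\DB(z_{2i+1}x_0^{j-1})$ and $\DB(z_{2i+1}x_0^{j-1}x_1)$ are divisible by $(1+t)$, and $\DB(z_{2i+2})$ by $(1-t^2)$. Since $\md$ is torsion-free and $\DB^2 = 0$, the three quotients are genuine cycles, each annihilated by the extracted factor.

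To read off the exact torsion and exhaustion, I would analyze the short exact sequence of chain complexes
$$
0 \longrightarrow C^{\overline{0}}_* \longrightarrow C_*(\Gb{n},\md) \longrightarrow C^{\overline{1}}_* \longrightarrow 0
$$
with $C^{\overline{0}}_*$ spanned by strings beginning with $\overline{0}$. The rule $\DB(\overline{0}A) = \overline{0}\DA A$ identifies $C^{\overline{0}}_*$ with $C_*(\Ga{n-1},\md)$ equipped with the \emph{trivial} $t$-action; by Subsection~\ref{ss:braid_homol} its homology is the free $\md$-module generated by the $\overline{0}$-lifts of $x_0^{n-1}$ and $x_0^{n-3}x_1$. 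A parallel stratification of $C^{\overline{1}}_*$ by the leading block $z_k$, combined with the explicit evaluation $\qbin{k}{0}'_{-1,t} = (1+t)(1-t^2)^i$ for $k = 2i+1$ and $(1-t^2)^{i+1}$ for $k = 2i+2$ from the first step, shows that in the induced long exact sequence each free $\overline{0}$-generator is hit by multiplication by exactly this factor, becoming torsion of the stated precise order, and that no additional cycles appear.

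The main obstacle is analyzing the $\overline{1}$-quotient $C^{\overline{1}}_*$, whose internal differential mixes leading blocks of different sizes. I would handle this by induction on $n$, grouping strings by the length of the leading $z_k$-block and using the parity analysis of $\qbin{k}{h}'_{-1,t}$ to perform an Euler-characteristic match with the three claimed families. Over $\Q$ this matching is made tight by the fact that only two rational A-type tails can appear, namely $x_0^{j-1}$ and $x_0^{j-1}x_1$ (Subsection~\ref{ss:braid_homol}), together with the empty tail available for even $k$; so the three families must exhaust $H_*(\Gb{n};\md)$, and the torsion orders are as claimed.
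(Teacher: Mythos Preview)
The paper does not prove this proposition in-text; it is imported wholesale from \cite[Sec.~4.2]{calmar}, so there is no in-paper argument to compare against. Your first two paragraphs --- the parity analysis of $\qbin{k}{h}_{-1,t}'$ and the resulting construction of the three families as honest cycles annihilated by $(1+t)$ or $(1-t^2)$ --- are correct and are exactly how one begins in \cite{calmar}.

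The gap is in your third paragraph. You claim that in the long exact sequence for $0\to C^{\overline{0}}_*\to C_*\to C^{\overline{1}}_*\to 0$ each free generator of $H_*(C^{\overline{0}})$ is hit by the connecting map with the factor $\qbin{k}{0}_{-1,t}'$, namely $(1+t)(1-t^2)^i$ or $(1-t^2)^{i+1}$, ``becoming torsion of the stated precise order''. But the stated orders are $(1+t)$ and $(1-t^2)$, which for $i\geq 1$ are strictly smaller than the factors you compute, so your sentence is internally inconsistent. What actually happens is that \emph{several} leading blocks $z_k$ feed into the same $\overline{0}$-class; the image of $\delta$ is generated by all their contributions, and the torsion is governed by a gcd, not by the single top coefficient. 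Already for $\Gb{3}$ the class $[\overline{0}01]\in H_1(C^{\overline{0}})$ receives $(1+t)$ from the cycle $\overline{1}01$ and $(1-t^2)$ from $\overline{1}10$, so the cokernel is $\md/(1+t)$, not $\md/(1+t)(1-t^2)$. Moreover, part of $H_*(\Gb{n};\md)$ does not lie in the image of $H_*(C^{\overline{0}})$ at all: for $n=3$ the class $\DB(z_3)/(1+t)\in H_2$ comes from a genuine $(1+t)$-torsion class in $H_*(C^{\overline{1}})$, which your sketch never computes. Pinning down $H_*(C^{\overline{1}})$ and the full image of $\delta$ is precisely the work carried out in \cite[Sec.~4.2]{calmar}; an Euler-characteristic match over $\Q$ counts ranks but cannot distinguish $(1+t)$-torsion from $(1+t)(1-t^2)^i$-torsion, so your final paragraph does not close the gap.
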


From the description given in \cite[Thm.~4.5, Thm.~4.12]{calmar} we have the following results.

\begin{prop}\label{prop:homol_p2}
For $p=2$ the  $\F[t]$-module $\oplus_{i,n} H_i(\Gb{n}; \F[t^\pmu])$ has a basis given by
\begin{equation}\label{generators_21}
%\gs(z_c, x_0 x_{i_1} \cdots x_{i_k}):=
\frac{\DB(z_{2^{h+1}(2m+1)+1}x_{i_1} \cdots x_{i_k})}{(1+t)}.
\end{equation}
and
\begin{equation}\label{generators_022}
\frac{\DB(z_{2^{h+1}(2m+1)+2^i}x_{i_1} \cdots x_{i_k})}{(1-t^2)^{2^{i-1}}}.
\end{equation}
where $i \leq i_1 \leq \cdots i_k$, $i \leq h$ and the first kind of generators have
torsion of order $(1+t)$ while the second kind of generators have torsion or order $(1-t^2)^{2^{i-1}}$.\end{prop}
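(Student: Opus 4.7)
The plan is to compute the bigraded module $\bigoplus_{i,n} H_i(\Gb{n}; \F_2[t^\pmu])$ directly from the Salvetti-type complex $(C_*(\Gb{n}, \md), \DB)$ of Definition \ref{def:complesso}, specialized to $\md = \F_2[t^\pmu]$. Every basis element of $C_*(\Gb{n}, \md)$ admits a unique decomposition either as $\overline{0}A$ or as $z_l A'$ with $l \geq 1$, where $A, A'$ are strings starting with $0$. The identity $\DB(\overline{0}A) = \overline{0}\DA A$ makes $\overline{0}\cdot C_*(\Ga{n-1},\md)$ into a subcomplex on which $t$ acts trivially, and the associated short exact sequence of complexes yields a long exact sequence whose constant-coefficient input is controlled by the $\F_2$-algebra description $H_*(\Br(m),\F_2) = \F_2[x_0,x_1,\dots]_{\deg=m}$ recalled in Subsection \ref{ss:braid_homol}. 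This furnishes the ``tail'' factors $x_{i_1}\cdots x_{i_k}$ and reduces the problem to understanding the contribution of each $z_l$-block.

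The heart of the argument is a $2$-adic analysis of $\DB z_l$ using the explicit formula
\[
\DB\,\overline{1}1^{l-1} \;=\; \qbin{l}{0}_{-1}' \,\overline{0}1^{l-1} + \sum_{h=1}^{l-1}(-1)^h \qbin{l}{h}_{-1}' \,\overline{1}1^{h-1}01^{l-h-1}.
\]
A Kummer/Lucas-type study of the mod-$2$ reduction of $\qbin{l}{h}_{-1}' = \qbin{l}{h}_{-1}\prod_{j=h}^{l-1}(1+tq^j)\big|_{q=-1}$ reveals that $\DB z_l$ is divisible in $\F_2[t]$ by $(1+t)$ exactly when $l = 2^{h+1}(2m+1)+1$, and by $(1-t^2)^{2^{i-1}}$ exactly when $l = 2^{h+1}(2m+1)+2^i$ with $1 \leq i \leq h$. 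The normalized quotients $\DB z_l / (1+t)$ and $\DB z_l / (1-t^2)^{2^{i-1}}$ are then cycles, and the Leibniz rule $\DB(z_l y) = (\DB z_l) y + (-1)^l z_l \DA y$, combined with the fact that $\DA x_{i_j} \equiv 0 \pmod 2$, lifts them to the cycles $\DB(z_l x_{i_1}\cdots x_{i_k})/(1+t)$ and $\DB(z_l x_{i_1}\cdots x_{i_k})/(1-t^2)^{2^{i-1}}$ announced in \eqref{generators_21} and \eqref{generators_022}. The constraint $i \leq i_1$ in the statement emerges naturally: smaller indices $i_j < i$ would reintroduce a nontrivial term in $\DA(x_{i_1}\cdots x_{i_k})$ already at the level of divisibility by $(1-t^2)^{2^{i-1}}$, spoiling the cycle property.

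Finally one must verify that these classes form a free $\F_2[t]$-basis. Freeness and linear independence can be obtained by induction on the length of the initial $z$-block, filtering $C_*(\Gb{n},\md)$ by this length and identifying the associated graded with a direct sum of shifted copies of $C_*(\Ga{*},\F_2)$. Exhaustiveness is verified by comparing bigraded dimensions with a Poincar\'e-series count obtained by reducing the rational basis of Proposition \ref{prop:hrazionale} modulo $2$ and adding the supplementary $2$-torsion families coming from the nonzero valuations above. The main obstacle is precisely this last step of bookkeeping: organizing the spectral sequence associated to the $z$-block filtration and certifying that no extra classes survive beyond the two explicit families \eqref{generators_21} and \eqref{generators_022}. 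This is exactly the content of \cite[Thm.~4.5, Thm.~4.12]{calmar}, from which the proposition is quoted.
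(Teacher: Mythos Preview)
The paper does not give a proof of this proposition; it is stated as a direct quotation from \cite[Thm.~4.5, Thm.~4.12]{calmar}, and your proposal ultimately lands at the same citation. So at the level of what is actually being invoked, you and the paper agree.

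Your added sketch is broadly in the right spirit (filter by the leading $z$-block, use the mod-$2$ structure of $H_*(\Br(m);\F_2)$ for the tail, analyze the $(1+t)$-divisibility of the coefficients $\qbin{l}{h}'_{-1}$), but a few statements are imprecise as written. The claimed decomposition ``$\overline{0}A$ or $z_lA'$ with $A,A'$ starting with $0$'' is not correct: $A$ and $A'$ are arbitrary strings, and the top string $\overline{1}1^{n-1}$ fits neither template. Likewise, on the subcomplex $\overline{0}\cdot C_*(\Ga{n-1},\md)$ it is not that ``$t$ acts trivially'' but rather that the differential $\DB=\overline{0}\DA$ has no $t$-dependence. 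These are repairable, and since the paper itself defers entirely to \cite{calmar} for the actual argument, the honest proof here is the citation---which you give.
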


\begin{prop}\label{prop:homol_pdisp}
For $p >2$ or $p=0$, lef $\F$ be a field of characteristic $p$. For $n$ odd the homology $H_i(\Gb{n}; \F[t^\pmu])$ is an $\F[t]$-module with torsion of order $(1+t)$.
\end{prop}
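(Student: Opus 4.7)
The plan is to invoke the full structure theorem \cite[Thm.~4.12]{calmar} for $\bigoplus_{i,n}H_i(\Gb{n};\F[t^\pmu])$ in characteristic $p>2$ or $p=0$, whose rational shadow is Proposition \ref{prop:hrazionale}. That theorem exhibits the homology as a direct sum of cyclic $\F[t]$-modules annihilated either by $(1+t)$ (two families indexed by the elements $z_{2i+1}$) or by $(1-t^2)$ (a third family indexed by $z_{2i+2}$). The proposition we wish to prove asserts that, for odd string length $n$, the third family actually has annihilator $(1+t)$ rather than just $(1-t^2)$.

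First I would isolate the $z_{2i+2}$-generators that contribute for odd $n$: these have the form $\DB(z_{2i+2}\cdot w)/(1-t^2)$ with $w$ a braid-homology monomial (in the generators $h, x_j, y_j$ recalled in Section \ref{sec:homol_artin}) whose degree makes the total string length $2i+3+\deg(w)$ odd. For each such generator I would exhibit an auxiliary chain $\alpha\in C_*(\Gb{n},\md)$ whose $\DB$-boundary equals $(1+t)$ times the generator, thereby upgrading its annihilator from $(1-t^2)$ to $(1+t)$.

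The model case is $n=3$, $i=0$: a direct application of Definition \ref{def:complesso} yields $\DB(\overline{1}01-(1+t)\overline{0}11)=(1+t)\,\overline{0}10=(1+t)\cdot \DB(z_2)/(1-t^2)$. For general odd $n$ the construction proceeds by moving the final $0$ in $\overline{1}1^{2i+1}0$ past one of the $1$'s and exploiting the factorization $\qbin{2i+2}{0}_{-1}'=(1+t)\prod_{j=1}^{2i+1}(1+t(-1)^j)$ of the leading $\DB$-coefficient. The residual factor $(1-t)$ coming from $(1-t^2)=(1-t)(1+t)$ then becomes a unit modulo $(1+t)$ in $\F[t]$, because it specializes to $2$ at $t=-1$, and this is where the hypothesis $\mathrm{char}\,\F\neq 2$ enters decisively.

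The main obstacle is producing these witnesses uniformly in the decorating monomial $w$, particularly in positive characteristic where $w$ can involve the polynomial generators $y_j$ whose $\DA$-boundaries are themselves divisible by $p$. One must check that the resulting $p$-divisibility does not obstruct the absorption of the $(1-t)$-factor sketched above; this is the heart of the calculation and is where the explicit description of the basis in \cite[Thm.~4.12]{calmar} is used in full force. Once the witnesses exist for every $z_{2i+2}\cdot w$ contributing to odd $n$, every class in $H_*(\Gb{n};\F[t^\pmu])$ is annihilated by $(1+t)$, which is the conclusion of the proposition.
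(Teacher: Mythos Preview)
The paper's argument is a one-line parity check: the structure theorems (Proposition~\ref{prop:hrazionale} for $p=0$ and \cite[Thm.~4.12]{calmar} for $p>2$) list generators whose torsion order is $(1+t)$ or $(1-t^2)^k$, and one simply observes that every generator carrying $(1-t^2)^k$-torsion lives in \emph{even} degree $n$. In characteristic $0$ this is immediate from Proposition~\ref{prop:hrazionale}: the only $(1-t^2)$-classes are $\DB(z_{2i+2})/(1-t^2)$, and $z_m$ has degree~$m$ (the trailing $0$ in $z_m=\overline{1}1^{m-1}0$ is dropped when $z_m$ stands alone, consistently with the braid convention and with Proposition~\ref{prop:hrazionale_minus_t}, where $z_n$ sits in $\Gb{n}$). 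Thus for odd $n$ there is nothing with $(1-t^2)^k$-torsion to begin with, and the proposition follows.

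Your proposal rests on the opposite degree reading---you take $z_{2i+2}$ to contribute string length $2i+3$---and this leads you to believe there are $(1-t^2)$-torsion generators in odd degree that must be ``upgraded'' to $(1+t)$-torsion via explicit chain witnesses. The model computation you give at $n=3$ is a correct chain identity, but the class $\overline{0}10$ you produce is not the generator $\DB(z_2)/(1-t^2)$ of Proposition~\ref{prop:hrazionale}: that generator is $\overline{0}1\in H_*(\Gb{2};\F[t^\pmu])$, not an element of $\Gb{3}$. The cycle $\overline{0}10$ is already accounted for by the $(1+t)$-torsion basis of $H_*(\Gb{3};\F[t^\pmu])$, so your example does not illustrate an upgrade at all. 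Beyond this degree miscount, the proposal leaves its self-declared ``main obstacle'' (uniform construction of witnesses across all decorating monomials $w$, including the $y_j$ for $p>2$) entirely unexecuted, so even on its own terms it is a programme rather than a proof---and one that the correct parity observation makes unnecessary.
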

The Proposition \ref{prop:homol_pdisp} above is a consequence of Proposition \ref{prop:hrazionale} and \cite[Thm.~4.12]{calmar}, since 
both results provide a description of the module $\oplus_{i,n} H_i(\Gb{n}; \F[t^\pmu])$ with generators with torsion of order $(1+t)$ or $(1-t^2)^k$ for suitable exponents $k$'s. One can verify that when $n$ is odd the torsion of the generators of degree $n$ is always $(1+t)$.

Next we compute the homology groups  and $H_*(\Gb{n}; \F_2[t]/(1-t^2))$ 
%and the Bockstein homomorphism $\beta_2$ 
using the explicit description of \cite[Sec.~4.3 and 4.4]{calmar}.
As a special case of \cite[Prop.~4.7]{calmar} we have 
%\begin{prop}
the isomorphism
\begin{equation}\label{eq:decomposizione}
H_i(\Gb{n}; \F_2[t]/(1-t^2)) = h_{i}(n,2) \oplus h_{i}'(n,2)
\end{equation}
where the two summands are determined by the following exact sequence:
$$
0 \to h_{i}'(n,2) \to H_i(\Gb{n}; \F_2[t^{\pmu}]) \stackrel{(1+t^2)}{\longrightarrow} H_i(\Gb{n}; \F_2[t^{\pmu}]) \to h_{i}(n,2) \to 0.
$$
%\end{prop}
For odd $n$ all the elements of $H_i(\Gb{n}; \F_2[t^{\pmu}])$ are multiple of $x_0$ and hence have $(1+t)$-torsion (see Proposition \ref{prop:homol_p2}). Hence the multiplication by 
$(1+t^2)$ is the zero map and the generators of  $h_{i}'(n,2)$ and $h_{i}(n,2)$ are  in bijection with a set of generators of $H_i(\Gb{n}; \F_2[t^{\pmu}])$.

In particular, from a direct computation (see \cite[\S 4.4]{calmar}) we have:
\begin{prop} \label{prop:generators}
For odd $n$ the homology $H_*(\Gb{n}; \F_2[t]/(1-t^2))$ is generated, as an $\F_2[t]$-module, by the classes of the form 
\begin{equation}\label{generators1}
\gp (z_c, x_0 x_{i_1} \cdots x_{i_k}) := (1-t) z_{c+1} x_{i_1} \cdots x_{i_k}.
\end{equation}
that correspond to the generators of $h_{i}'(n,2)$
and
\begin{equation}\label{generators2}
\gs(z_c, x_0 x_{i_1} \cdots x_{i_k}):=\frac{\DB(z_{c+1}x_{i_1} \cdots x_{i_k})}{(1+t)}.
\end{equation}
that correspond to generators of $h_{i}(n,2)$. Here $0 \leq i_1 \leq \cdots i_k$, $c$ is even and both kind of generators have
torsion $(1+t)$.
\end{prop}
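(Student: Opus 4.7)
The approach is to build chain-level representatives in $C_*(\Gb{n};\F_2[t]/(1-t^2))$ for the two summands in \eqref{eq:decomposizione}, exploiting the characteristic-$2$ identity $1-t^2=(1+t)^2$. The crucial input, already recorded in the paragraph just above the statement, is that for odd $n$ every class of $H_*(\Gb{n};\F_2[t^\pmu])$ carries $(1+t)$-torsion (only family (\ref{generators_21}) of Proposition \ref{prop:homol_p2} contributes); hence multiplication by $1-t^2$ acts as zero on that module, so both the kernel and the cokernel in \eqref{eq:decomposizione} reproduce $H_*(\Gb{n};\F_2[t^\pmu])$. It is therefore enough to exhibit, for each generator $\xi = \DB(z_{c+1}x_{i_1}\cdots x_{i_k})/(1+t)$ of (\ref{generators_21}), concrete representatives of the cokernel and kernel summands.

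The cokernel side is immediate: applying the coefficient reduction $\F_2[t^\pmu]\twoheadrightarrow\F_2[t]/(1-t^2)$ sends $\xi$ to the class of the same chain, which is precisely $\gs(z_c,x_0 x_{i_1}\cdots x_{i_k})$ of (\ref{generators2}). By the very definition of \eqref{eq:decomposizione} these images exhaust the $h_i(n,2)$ summand, so the $\gs$-classes generate $h_i(n,2)$.

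The kernel side is obtained by a Bockstein-style lift. Starting from the identity $(1+t)\xi = \DB(z_{c+1}x_{i_1}\cdots x_{i_k})$ and multiplying again by $(1+t)$, the characteristic-$2$ identity $(1+t)^2=1-t^2$ yields
\[
\DB\!\bigl((1-t)\,z_{c+1}x_{i_1}\cdots x_{i_k}\bigr)\;=\;(1-t^2)\,\xi,
\]
whose right-hand side vanishes modulo $1-t^2$. Thus the chain $\gp(z_c,x_0 x_{i_1}\cdots x_{i_k}) := (1-t)\,z_{c+1}x_{i_1}\cdots x_{i_k}$ from (\ref{generators1}) is a cycle in $C_*(\Gb{n};\F_2[t]/(1-t^2))$. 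A direct diagram chase on the long exact sequence attached to $0\to\F_2[t^\pmu]\xrightarrow{1-t^2}\F_2[t^\pmu]\to\F_2[t]/(1-t^2)\to 0$ shows that the connecting homomorphism sends $[\gp(\cdots)]$ to $[\xi]$; since $1-t^2$ acts as zero in the middle term, this connecting map restricts to an isomorphism identifying $h'_i(n,2)$ with a copy of $H_*(\Gb{n};\F_2[t^\pmu])$, so the $\gp$-classes generate $h'_i(n,2)$.

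The $(1+t)$-torsion claim is immediate: $(1+t)\gs=\DB(z_{c+1}x_{i_1}\cdots x_{i_k})$ is a boundary, and $(1+t)\gp=(1-t^2)z_{c+1}x_{i_1}\cdots x_{i_k}\equiv 0\pmod{1-t^2}$. The main conceptual point of the argument is the Bockstein-type construction of $\gp$; once this is in place, linear independence of the two families is automatic, since for the same parameters $(c,i_1,\ldots,i_k)$ the classes $\gp$ and $\gs$ live in homological degrees differing by one.
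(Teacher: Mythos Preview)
Your argument is correct and is essentially the natural unpacking of what the paper leaves implicit: the paper merely cites the direct computation in \cite[\S4.4]{calmar}, whereas you spell out how the two families arise from the long exact sequence attached to $0\to\F_2[t^\pmu]\xrightarrow{1-t^2}\F_2[t^\pmu]\to\F_2[t]/(1-t^2)\to 0$, using that $(1+t)^2=1-t^2$ in characteristic~$2$ and that the $(1+t)$-torsion forces the multiplication map to vanish. Your identification of $\gs$ as the image under coefficient reduction and of $\gp$ as a Bockstein lift with $\partial[\gp]=[\xi]$ is exactly right. One small remark: your final sentence on linear independence is not quite the full story (the degree shift only separates $\gp$ and $\gs$ for the \emph{same} parameters), but since the proposition only asserts generation, and since independence within each family follows anyway from injectivity of the reduction map on $h_i$ and of the connecting map restricted to $h'_i$, this is harmless.
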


%%
%% Sposto da qui
%%
%%	
%\todo{sistemare }
Here we do not provide a description of the generators of $H_i(\Gb{n}; \F[t^\pmu])$ for a field $\F$ of characteristic $p>2$ and we also avoid a detailed presentation of a set of generators of  $H_i(\Gb{n}; \F[t]/(1+t))$ and $H_i(\Gb{n}; \F[t]/(1-t^2))$ for a generic field $\F$. For such a description, we refer to \cite{calmar}.

Since it will be useful in Section \ref{s:4tor} and in particular in the proof of Lemma \ref{lem:tau_torsion}, we provide sets of elements $\cB'$, $\cB''$ of the $\Z$-modules $H_i(\confm{n};\Z)\simeq H_i(\Gb{n}; \Z[t]/(1+t))$ and $H_i(\confmd{n};\Z)\simeq H_i(\Gb{n}; \Z[t]/(1-t^2))$ for $n$ odd, such that the following two condition are satisfied:
\begin{enumerate}[label=(\roman*)]
	\item $\cB'$ (resp.~$\cB''$) induces a base of the homology of $H_i(\confm{n};\Q) $ (resp.~$H_i(\confmd{n};\Q)$);
	%\item the elements in $\cB'$ (resp.~$\cB''$) generate the $2$-torsion submodule in $H_i(\confm{n};\Z)$ (resp.~$H_i(\confmd{n};\Z)$) with integer coefficients;
	\item the images of the elements of $\cB'$ (resp.~$\cB''$) in $H_i(\confm{n};\Z_p)$ (resp.~$H_i(\confmd{n};\Z_p)$) are
	linearly independent for any prime $p$.
\end{enumerate}
\begin{df}\label{def:BB}
Let $n$ be an odd integer. We define the sets $\cB' \subset H_i(\confm{n};\Z)$ (for $e=1$) and $\cB'' \subset H_i(\confmd{n};\Z)$ (for $e=2$) given by the following elements:
$$ 
\omega_{2i,j,0}^{(e)}:= \frac{\DB(z_{2i+1}x_0^{j-1})}{(1+t)} \mbox{ and } \widetilde{\omega}_{2i,j,0}^{(e)}:= \frac{(1-(-t)^e)z_{2i+1}x_0^{j-1}}{(1+t)} \qquad \mbox{ for }j>0;
$$
and
$$
\omega_{2i,j,1}^{(e)}:=\frac{\DB(z_{2i+1}x_0^{j-1}x_1)}{(1+t)} \mbox{ and } \widetilde{\omega}_{2i,j,1}^{(e)}:=\frac{(1-(-t)^e)z_{2i+1}x_0^{j-1}x_1}{(1+t)} \qquad \mbox{ for }j>0.
$$
\end{df}
%where for $e=1$ we have elements in $\cB'_1$, while for $e=2$ we have elements in $\cB''_1$.
It follows from \cite[\S4.2]{calmar}
that the elements above provide a basis for $H_*(\confm{n};\Q) $ (resp.~$H_*(\confmd{n};\Q)$) for $n$ odd (condition (i)). 
%\todo{spiegare richiamando anche Prop.~\ref{prop:hrazionale}}

For condition (ii), one can check that the elements in $\cB'$ and $\cB''$ define, mod $2$, a subset of the bases of  $H_i(\Gb{n}; \Z_2[t]/(1+t))$ and $H_i(\Gb{n}; \Z_2[t]/(1-t^2))$ given in \cite[\S4.4]{calmar} and, mod $p$ for an odd prime, a subset of the bases of $H_i(\Gb{n}; \Z_p[t]/(1+t))$ and $H_i(\Gb{n}; \Z_p[t]/(1-t^2))$  given in \cite[\S 4.6]{calmar}.

Hence the following claim follows:
\begin{prop}\label{prop:BB}
For $n$ odd the elements of $\cB'$ (resp.~$\cB''$) are a free set of generator of a maximal free $\Z$-submodule of $H_i(\confm{n};\Z)$ (resp.~$H_i(\confmd{n};\Z)$).
\end{prop}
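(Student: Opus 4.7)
The statement is purely algebraic: conditions (i) and (ii) are already checked in the text, so all that is needed is to argue that together they force $\cB'$ to be a free basis of a maximal free $\Z$-submodule of $A := H_i(\confm{n};\Z)$. The argument for $\cB''$ is identical with $\confmd{n}$ in place of $\confm{n}$, so I would only discuss $\cB'$.

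Since $\confm{n}$ is a $K(\Gb{n},1)$ of finite type, $A$ is a finitely generated abelian group, and splits as $A = F \oplus T$ with $F$ free of rank $r$ and $T$ torsion. By condition (i), the images of $\cB'$ in $A\otimes\Q$ are a $\Q$-basis, so $|\cB'| = r$ and the elements of $\cB'$ are $\Z$-linearly independent (any $\Z$-relation would yield a $\Q$-relation). Hence $N := \langle \cB' \rangle$ is free of rank $r$; moreover the projection $A \to F$ is injective on $N$, since its kernel $N\cap T$ is torsion inside a free module, so $N$ sits inside $F$ as a subgroup of finite index $d \geq 1$.

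It remains to show $d = 1$, and this is the only place condition (ii) enters. If some prime $p$ divided $d$, the induced map $N/pN \to F/pF$ of $\F_p$-vector spaces of equal dimension $r$ would fail to be surjective, hence would not be injective, and would have nontrivial kernel. Composing with the natural inclusion $F/pF \hookrightarrow A \otimes \F_p \hookrightarrow H_i(\confm{n};\F_p)$ coming from the universal coefficient theorem, one would conclude that the images of $\cB'$ in $H_i(\confm{n};\F_p)$ are $\F_p$-linearly dependent, contradicting (ii). Therefore $d = 1$, so $N$ coincides with the free summand $F$ and $\cB'$ is a free basis of the maximal free $\Z$-submodule of $A$. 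There is no real obstacle here; all the substantial work was done in the preceding paragraphs when verifying (i) and (ii) against the explicit bases of \cite[\S4.2, \S4.4, \S4.6]{calmar}.
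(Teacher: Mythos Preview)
Your plan is sound and matches the paper's own (terse) justification: the proposition is meant to follow formally from conditions (i) and (ii), and the paper simply asserts this after checking (i) and (ii) against the bases in \cite{calmar}. But your deduction that $d=1$ has a real gap. The map $N/pN \to F/pF$ you use is induced by the \emph{projection} $\pi:A\to F$, and composing it with the inclusion $F/pF \hookrightarrow A\otimes\F_p$ does \emph{not} recover the natural reduction $N/pN \to A\otimes\F_p$ coming from the inclusion $N\subset A$, which is the map governed by (ii); the two differ exactly by the torsion components of the elements of $\cB'$, and nothing guarantees those vanish. Concretely, take $A=\Z\oplus\Z/4$, $F=\Z\oplus 0$, and $N=\langle(2,1)\rangle$. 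Then (i) and (ii) both hold (mod $2$ one has $(2,1)\mapsto(0,1)\ne 0$ in $A/2A$, and mod odd $p$ one has $(2,1)\mapsto 2\ne 0$), yet $\pi(N)=2\Z$, so $d=2$. Read literally, your argument would produce a false contradiction at $p=2$. Note also that even $d=1$ would only give $\pi(N)=F$, not $N=F$.

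The proposition asks for less: only that $N$ be \emph{a} maximal free submodule (and indeed $\langle(2,1)\rangle$ is maximal free in $\Z\oplus\Z/4$, though distinct from $F$). To prove this, bypass the splitting altogether. If $N\subsetneq N'$ with $N'$ free, then both have rank $r$, so $[N':N]$ is finite and $>1$; pick a prime $p$ dividing it and an element $x\in N'\setminus N$ with $px\in N$. Then $px\in pN'\subset pA$, but $px\notin pN$ (else $p(x-n)=0$ in the torsion-free group $N'$ would force $x=n\in N$). Hence $px$ represents a nonzero element of the kernel of $N/pN\to A\otimes\F_p\hookrightarrow H_i(\confm{n};\F_p)$, contradicting (ii).
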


\begin{prop}\label{prop:hrazionale_minus_t}
Let $n$ be an even integer. Let $\F$ be a field of characteristic $0$.
%The Poincar\'e polynomial of 
%$H_*(\confm{n};\F) = H_*(\Gb{n}; \F[t]/(1+t))$ is $(1+q)(1+q+\ldots+q^{n-1})$
%and a base of the homology is given by the following generators
%$$ \frac{\DB (z_{2i+1}x_0^{j-1}x_1)}{1+t}, z_{2i+1}x_0^{j-1}x_1, \frac{\DB (z_{2i+2})}{1+t},  z_{2i+2}.$$
The Poincar\'e polynomial of 
$H_*(\Gb{n}; \F[t]/(1-t))$ is $(1+q)q^{n-1}
$
and a base of the homology is given by the following generators
$$\frac{\DB (z_{n})}{1-t},  z_{n}.$$
\end{prop}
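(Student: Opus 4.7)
The plan is to compare $H_*(\Gb{n};\F[t]/(1-t))$ with $H_*(\Gb{n};\md)$ -- determined in Proposition \ref{prop:hrazionale} -- via the short exact sequence of $\Gb{n}$-modules
\[
0 \to \md \xrightarrow{\,\cdot(1-t)\,} \md \to \F[t]/(1-t) \to 0.
\]
The resulting long exact sequence
\[
\cdots \to H_i(\Gb{n};\md) \xrightarrow{\,\cdot(1-t)\,} H_i(\Gb{n};\md) \to H_i(\Gb{n};\F[t]/(1-t)) \to H_{i-1}(\Gb{n};\md) \to \cdots
\]
reduces the computation to analysing multiplication by $(1-t)$ on the cyclic summands of $H_*(\Gb{n};\md)$.

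By Proposition \ref{prop:hrazionale}, for even $n$ and $\F$ of characteristic zero, $H_*(\Gb{n};\md)$ decomposes as a direct sum of $(1+t)$-torsion classes (the $\alpha$'s and $\beta$'s, spread across dimensions $\le n-1$) together with a single $(1-t^2)$-torsion class $\gamma := \DB(z_n)/(1-t^2)$ concentrated in degree $n-1$. On any $(1+t)$-torsion summand one has $t \equiv -1$, so $(1-t)\equiv 2$ acts invertibly; on the summand $\md/(1-t^2)$, multiplication by $(1-t)$ has one-dimensional kernel $\F\cdot(1+t)$ and one-dimensional cokernel. Together with $H_n(\Gb{n};\md)=0$ (no generator appears in top degree, equivalently $\DB z_n\neq 0$ in the domain-coefficient complex), the long exact sequence forces $H_i(\Gb{n};\F[t]/(1-t))=0$ for $i\notin\{n-1,n\}$ and
\[
H_{n}(\Gb{n};\F[t]/(1-t)) \cong \ker\bigl((1-t)|_{H_{n-1}(\Gb{n};\md)}\bigr) \cong \F, \qquad H_{n-1}(\Gb{n};\F[t]/(1-t)) \cong \coker\bigl((1-t)|_{H_{n-1}(\Gb{n};\md)}\bigr) \cong \F,
\]
giving the Poincar\'e polynomial $(1+q)q^{n-1}$.

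To identify the two basis elements, I would verify that $\DB z_n$ vanishes modulo $(1-t)$ for even $n$. Using the factorisation $\qbin{l}{h}'_{q,t} = \qbin{l}{h}_q \prod_{j=h}^{l-1}(1+tq^j)$ and specialising at $(q,t)=(-1,1)$, every factor with $j$ odd is zero, so the product survives only when the range $[h,l-1]$ is empty ($l=h$) or a single even integer (which forces $l$ odd). For $l=n$ even and $0\le h\le n-1$ neither case occurs, so every coefficient in $\DB z_n$ vanishes at $t=1$. Hence $z_n$ reduces to a top cycle in the one-dimensional $C_n(\Gb{n};\F[t]/(1-t))$ and must generate $H_n$. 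A diagram chase then gives $\partial[z_n]=[\DB(z_n)/(1-t)]=[(1+t)\gamma]$, matching the kernel generator located above. The reduction of $\DB(z_n)/(1-t)$ modulo $(1-t)$ is a nonzero scalar multiple of the image of $\gamma$ in $H_{n-1}(\Gb{n};\F[t]/(1-t))$, and hence a generator there.

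The main obstacle is the bookkeeping required for the first step: extracting cleanly from Proposition \ref{prop:hrazionale} that for even $n$ precisely one cyclic summand of $H_*(\Gb{n};\md)$ carries $(1-t^2)$-torsion, and that this summand lives in degree $n-1$ with generator $\gamma=\DB(z_n)/(1-t^2)$, while every other summand has torsion $(1+t)$ and is therefore wiped out by $(1-t)=2$. Once this structural input is secured, the remainder is a routine long-exact-sequence chase together with the parity-based vanishing of the $q,t$-binomial coefficients at $t=1$.
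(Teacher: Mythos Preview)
Your proof is correct and follows exactly the route indicated in the paper: deduce the result from Proposition~\ref{prop:hrazionale} via the long exact sequence associated to $0 \to \md \xrightarrow{(1-t)} \md \to \F[t]/(1-t) \to 0$, observing that $(1-t)$ is invertible on the $(1+t)$-torsion summands and has one-dimensional kernel and cokernel on the unique $(1-t^2)$-torsion summand $\DB(z_n)/(1-t^2)$ in degree $n-1$. The paper simply refers to \cite[\S 4.2]{calmar} for this computation, whereas you have spelled out the details (including the vanishing of $\DB z_n$ at $t=1$ via the factorisation of $\qbin{l}{h}_{-1,t}'$), but the argument is the same.
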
	
\begin{proof}
This follows from Proposition \ref{prop:hrazionale} and by studying the long exact sequence associated to $$
0 \to \F[t^\pmu] \stackrel{(1-t)}{\longrightarrow} \F[t^\pmu] \to \F[t]/(1-t) \to 0
$$ as in \cite[\S~4.2]{calmar}.
\end{proof}
%where for $e=1$ we have elements in $\cB'_2$, while for $e=2$ we have elements in $\cB''_2$.
%%
%% Sposto fino a qui
%%
%%	

\section{Exact sequences} \label{sec:exact_seq}

% \emph{[inizia Filippo coi teoremi principali]}

% \emph{-ricostruire la decomposizione data da Bianchi e arrivare alla successione di Mayer-Vietoris} \vskip 10pt

We recall that the double covering $\pi: \confmd{n} \to \conf_n$ has a continuous section $s$  (see also \cite[p. 30]{bianchi}) that we can define as follows.
\begin{df} \label{def:section}
Given a monic polynomial $p$ with $n$ distinct roots $x_1, \ldots, x_n$ such that $|x_i| <1$ for all $i$ we can map
$$s: p \mapsto \left(p, z:= \frac{(\max_i |x_i|) +1}{2}, \sqrt{p(z)}) \right)$$
where we choose $\sqrt{p(z)}$ as a continuous function as follows: since $p(z) = \prod_i(z-x_i)$ is a product of complex numbers with $\Re (z-x_i) >0$, we can choose $\sqrt{z-x_i}$ to be the unique square root with $\Re \sqrt{z-x_i} >0$ and hence we can define $\sqrt{p(z)} := \prod_i \sqrt{z-x_i}$.
\end{df}
Clearly the section $s:\conf_n \to \confmd{n}$ is a continuous lifting of the section  $\overline{s}:\conf_n \to \confm{n}$ that maps $p \mapsto (p, z)$.
The homology map $s_*$ is injective and the short exact sequence
$$
0 \to H_i(\conf_n) \stackrel{s_*}{\longrightarrow} H_i(\confmd{n}) \stackrel{J}{\to} H_i(\confmd{n}, \conf_n) \to 0
$$
is split. Hence $H_i(\confmd{n}) \simeq H_i(\conf_n) \oplus H_i(\confmd{n}, \conf_n)$.
Moreover the same argument applied to the section $\overline{s}$ for $\pi: \confm{n} \to \conf_n$ implies that we have the splitting $H_i(\confm{n}) \simeq H_i(\conf_n) \oplus H_i(\confm{n}, \conf_n)$.	

\begin{rem}\label{rem:Cn_sections}
Let $\sigma: \confmd{n} \to \confmd{n}$ the only nontrivial automorphism of the double covering $\confmd{n} \to \confm{n}$.
We can obviously define another section  $s': \conf_n \to \confmd{n}$ such that $\sigma s = s'$, $\sigma s' = s$.
Hence we can include $S:\conf_n \times \{1, -1\} \into \confmd{n}$ and the restriction of $\sigma$ exchanges the components $\conf_n \times \{1\}$ and $\conf_n \times \{-1\}$. Hence we can understand the inclusion $s:\conf_n \into \confmd{n}$ in homology via the following diagram:
$$
H_i(\conf_n) \stackrel{\Id \otimes 1}{\longrightarrow} H_i(\conf_n) \otimes \ring[t]/(1-t^2) \stackrel{S_*}{\to} H_i(\confmd{n}) 
$$
and the composition is injective.
\end{rem}

\begin{rem} \label{rem:s_sprime}
If $n$ is odd the two sections $s,s'$ are homotopic. In fact we can define in a unique way a continuous family of maps
$
s_t:\conf_n \to \confmd{n}
$
such that $s_0 = s$ and
$$
s_t: \mapsto \left(p, z_t:= e^{2\pi i t}\frac{(\max_i |x_i|) +1}{2}, \sqrt{p(z_t)} \right)
$$
Since $p(z_t)$ is a product of $n$ factors it is clear that  for $n$ odd we have $s_1 = s'$.
As a consequence $s_* = s'_*$.\end{rem}

According to Bianchi (\cite{bianchi}) we consider the following decomposition.
The space $\totsp_n$ is the union of the double covering $\confmd{n}$ and a subset diffeomorphic to $\confm{n-1}$. 
Let $N$ be a small tubolar neighborhood of $\confm{n-1}$ in $\totsp_n$ and let $M$ be the closure of its complement in $\totsp_n$. The complement $M$ is homotopy equivalent to $\confmd{n}$, while $\partial N = M \cap N$ is diffeomorphic to $\confm{n-1} \times S^1$. Moreover $M$ contains a subspace $\conf_n = s(\conf_n) \subset \totsp_n $.

Hence there is a relative Mayer-Vietoris long exact sequence 
$$
\cdots \to H_i(\partial N) \to H_i (N) \oplus H_i(M, \conf_n) \to H_i(\totsp_n, \conf_n) \to \cdots
$$
that is equivalent to the following long exact sequence:
$$
\cdots \to H_i(\confm{n-1}) \oplus H_{i-1}(\confm{n-1}) \otimes H_1(S^1)\stackrel{\iota}{\to} H_i (\confm{n-1}) \oplus H_i(\confmd{n}, \conf_n) \to H_i(\totsp_n, \conf_n) \to \cdots
$$
Notice that from Kunneth decomposition the restriction of the map $\iota$ induces an isomorphism between the terms (again, see also \cite[Lem.~58]{bianchi}):
$$
\iota: H_i(\confm{n-1}) \to H_i(\confm{n-1})
$$
and the restriction of $\iota$ to the second summand $H_{i-1}(\confm{n-1}) \otimes H_1(S^1)$ maps to zero if we project to the term $H_i(\confm{n-1})$.
Hence we can simplify our exact sequence as follows:
\begin{equation}\label{eq:bia1}
\cdots \to  H_{i-1}(\confm{n-1}) \otimes H_1(S^1)\stackrel{\iota}{\to} H_i(\confmd{n}, \conf_n) \to H_i(\totsp_n, \conf_n) \to \cdots
\end{equation}

% \emph{-descrivere le identificazioni, anche in termini di inclusioni e quozienti di successioni spettrali} \vskip 10pt

We already know that $\confm{n}$ is a classifying space for the Artin group of type $B_n$, and hence we have:
$$
H_i(\confm{n-1}) = H_i(\Gb{n-1}).
$$
Moreover the forgetful map $\confm{n-1} \to \conf_n$ is a covering with generic fibre given by the discrete set $P \in \conf_n$ and hence induces the isomorphism (see \cite[Lem.~8]{bianchi}):
$$
H_i(\Br_{n}; \st_n) = H_i(\conf_{n}; \st_n) \simeq H_i(\confm{n-1}).
$$
Recall that $\partial N = S^1 \times \confm{n-1}$. Hence with respect to the fibration $\partial N \to \conf_n$, with fiber $S^1 \times P$, the monodromy action of $\Br_n = \pi_1(\conf_n)$ on $H_1(S^1 \times P)$ is exactly the permutation action. It follows that we have isomorphic $\Br_n$-representations:
 $H_1(S^1 \times P) \simeq \st_n$.

As we already noticed in Remark \ref{rem:globalsection}:
$$
H_i (\totsp_n, \conf_n) \simeq H_{i-1}(\Br_n; H_1(\surf_n))
$$

Finally the term $ H_i(\confmd{n}, \conf_n)$ is isomorphic to $H_{i-1}(\Br_n; H_1(\ddiskP))$.

Then we can rewrite \eqref{eq:bia1} as follows:
\begin{equation}\label{eq:bia2}
\cdots \to H_{i-1}(\Br_n; H_1(S^1 \times P)) \stackrel{\iota}{\to} H_{i-1}(\Br_n; H_1(\ddiskP)) \to  H_{i-1}(\Br_n; H_1(\surf_n)) \to \cdots
\end{equation}

Actually we can see that \eqref{eq:bia2} is the long exact sequence for the homology of the group $\Br_n$ associated to the short exact sequence of coefficients
$$
0 \to H_1(S^1 \times P) \to H_1(\disk \times P) \oplus H_1(\ddiskP) \to H_1(\surf_n) \to 0
$$
that follows from the Mayer-Vietoris long exact sequence associated to the decomposition: $\surf_n = \disk \times P \cup \ddiskP$, with, up to homotopy, $\disk \times P \cap \ddiskP \simeq S^1 \times P$.

\begin{df} \label{df:mu}
We write $p= (p_1, \{p_2, \ldots, p_n\})$ for a point in $\confm{n-1}$ and   $\overline{p} := \{p_1, \ldots, p_n\}$. Let $$\delta(\overline{p}):= \frac{1}{2}\min \left( \{ |p_1 - p_i|,  2 \leq i  \leq n\} \cup \{ 1-|p_1|\}
\right).$$
We define the map $\mu$ is given  by 
$$
\confm{n-1} \times S^1 \ni (p,e^{it}) \mapsto (p_1 + \delta(\overline{p})e^{it}, \overline{p}) \in \confm{n}
$$
\end{df}

In order to understand $H_{i-1}(\Br_n; H_1(S^1 \times P)) \stackrel{\iota}{\to} H_{i-1}(\Br_n; H_1(\ddiskP))$ we can consider the commuting diagram
\begin{equation}\label{diag:trerighe0}
\begin{tabular}{c}
\xymatrix @R=2pc @C=2pc {
H_{i-1}(\Br_n; H_1(S^1 \times P)) \ar[r]^\iota \ar[d]^\simeq & H_{i-1}(\Br_n; H_1(\ddiskP))\ar[d]^\simeq  \\
H_{i-1}(\confm{n-1}) \otimes H_1(S^1) \ar[d]^{\mu_*} \ar[r] & H_i(\confmd{n}, \conf_n) \\
H_i(\confm{n}) 
%\ar[r]^{\tau} 
&H_i(\confmd{n})  \ar[u]_J
}
\end{tabular}
\end{equation}
where $\mu_*$ is induced by the map $\mu: \confm{n-1} \times S^1 \to \confm{n}$
and $J$ is induced by the inclusion $\confmd{n} \to (\confmd{n}, \conf_n)$.

%\emph{Leggendo Bianchi si capisce che $\mu$ \`e definita nel teorema 12 e poi nella sezione 3. Basterebbe quindi mostrare che la decomposizione che si trova dal teorema 12 corrisponde a quella della ss data dalla filtrazione del complesso.}

In \cite[Thm.~12]{bianchi} Bianchi shows that $H_*(\confm{n}) = H_*(\conf_n) \oplus H_{*-1}(\confm{n-1})$, where the first summand is the image of the map induced by the natural inclusion $\overline{s}: \conf_n \to \confm{n}$
and the projection to the first summand corresponds to the map induced by the forgetful map $r:\confm{n} \to \conf_n$. This argument is also implicit in \cite[Ch. 1,~\S 5]{vas} since Vassiliev shows that $H_i(\conf_n; H_1(\disk \setminus \P)) \simeq H_i(\confm{n-1})$ and from the spectral sequence associated to the projection $r:\confm{n} \to \conf_n$ (with section $\bar s$) we get $H_i(\confm{n}, \conf_n) =  H_{i-1}(\conf_n; H_1(\disk \setminus \P))
$. 

In order to provide an explicit description of the homology homomorphism induced by $\mu$, we give another proof of this splitting.
We have the short exact sequence:
$$
0 \to H_i(\conf_n) \stackrel{\overline{s}_*}{\longrightarrow} H_i(\confm{n}) \to H_i(\confm{n}, \conf_n) \to 0
$$
and since $\overline{s}$ is a section of $r$ we have that the exact sequence splits and 
$
H_i(\confm{n}, \conf_n) = \ker [r_*: H_i(\confm{n}) \to H_i(\conf_n)].
$

Let now define the decomposition 
$\conf_n \times \disk = \confm{n} \cup \confm{n-1} \times \disk$, where we naturally identify $\confm{n}$ with a subset of $\conf_n \times \disk$ mapping
$$
p= (p_1, \{p_2, \ldots, p_{n+1}\}) \mapsto (\{p_2, \ldots, p_{n+1}\}, p_1)
$$
and where we identify $\confm{n-1}\times \disk$ with a subset of $\conf_n \times \disk$ mapping
$$
(p',q)= ((p'_1, \{p'_2, \ldots, p'_{n}\}),q) \mapsto (\{p'_1, \ldots, p'_n\}, p_1+\delta(\overline{p}')q).
$$
Clearly $\confm{n} \cap \confm{n-1} \times \disk \simeq \confm{n-1} \times S^1$.
Hence we get the associated Mayer-Vietoris exact sequence:
$$
\cdots \to H_i(\confm{n-1} \times S^1) \to H_i(\confm{n}) \oplus H_i(\confm{n-1}) \to H_i(\conf_n) \to \cdots.
$$
Since $H_i(\confm{n})$ decomposes as $H_i(\confm{n}) = H_i(\confm{n},\conf_n) \oplus H_i(\conf_n)$ and the map $H_i(\confm{n}) \to H_i(\conf_n)$ is surjective, with kernel given by $H_i(\confm{n}, \conf_n)$, we have the isomorphism
$$
H_i(\confm{n-1} \times S^1) \simeq  H_i(\confm{n},\conf_n) \oplus H_i(\confm{n-1}).
$$
Finally notice that $H_i(\confm{n-1} \times S^1) = H_i(\confm{n-1}) \oplus  H_{i-1}(\confm{n-1}) \otimes H_1(S^1)$ and $H_{i-1}(\confm{n-1}) \otimes H_1(S^1)$ has trivial projection onto $H_i(\confm{n-1})$ since it factors through 
$$
H_{i-1}(\confm{n-1}) \otimes H_1(S^1) \to H_{i-1}(\confm{n-1}) \otimes H_1(\disk) \to H_i(\confm{n-1}).
$$
Recalling the definition of $\mu$ we obtain:
\begin{prop} \label{prop:isodellesemplicifazioni}The following groups are isomorphic
$$
H_{i-1}(\confm{n-1}) \simeq H_i(\confm{n},\conf_n)
$$
and the isomorphism is induced by the map $\mu$.
\end{prop}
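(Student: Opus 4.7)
The plan is to extract the conclusion directly from the Mayer--Vietoris argument developed in the paragraphs just preceding the statement, with Künneth providing the matching between summands.

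First I would write down explicitly the two decompositions at stake. From the Mayer--Vietoris sequence associated to $\conf_n \times \disk = \confm{n} \cup (\confm{n-1} \times \disk)$ one obtains (using that $\conf_n \times \disk \simeq \conf_n$, that the map $H_i(\confm{n}) \to H_i(\conf_n)$ is split surjective with kernel $H_i(\confm{n},\conf_n)$, and that $\confm{n-1} \times \disk \simeq \confm{n-1}$) the isomorphism
$$H_i(\confm{n-1} \times S^1) \simeq H_i(\confm{n},\conf_n) \oplus H_i(\confm{n-1}).$$
On the other hand, the Künneth formula gives the splitting
$$H_i(\confm{n-1} \times S^1) \simeq H_i(\confm{n-1}) \oplus \bigl(H_{i-1}(\confm{n-1}) \otimes H_1(S^1)\bigr).$$

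Next I would compare the two decompositions by analyzing how the Mayer--Vietoris boundary interacts with each Künneth summand. The map to the factor $H_i(\confm{n-1})$ on the right is induced by the inclusion $\confm{n-1} \times S^1 \hookrightarrow \confm{n-1} \times \disk$ followed by the homotopy equivalence $\confm{n-1} \times \disk \simeq \confm{n-1}$. This inclusion factors $H_1(S^1) \to H_1(\disk) = 0$, so the Künneth summand $H_{i-1}(\confm{n-1}) \otimes H_1(S^1)$ is killed, while $H_i(\confm{n-1})$ maps by the identity. Therefore the two splittings are compatible: $H_i(\confm{n-1})$ is identified on both sides, and what remains is the desired isomorphism
$$H_{i-1}(\confm{n-1}) \otimes H_1(S^1) \;\xrightarrow{\sim}\; H_i(\confm{n},\conf_n).$$
Choosing the standard generator of $H_1(S^1) \cong \Z$ yields the bare isomorphism $H_{i-1}(\confm{n-1}) \simeq H_i(\confm{n},\conf_n)$, and by construction it is induced by the inclusion $\confm{n-1} \times S^1 \hookrightarrow \confm{n}$, which is exactly $\mu$ (Definition \ref{df:mu}).

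The only point that demands any care is the last sentence: that the connecting map realized by the Mayer--Vietoris is literally induced by $\mu$, and not by some other geometric representative of the inclusion $\confm{n-1}\times S^1 \hookrightarrow \confm{n}$. This is a matter of checking that $\mu$ parametrizes exactly the intersection $\confm{n} \cap (\confm{n-1} \times \disk)$ used in the decomposition, via $(p,e^{it}) \mapsto (p_1 + \delta(\overline{p})e^{it}, \overline{p})$; this is immediate from Definition \ref{df:mu} and the explicit embedding of $\confm{n-1} \times \disk$ into $\conf_n \times \disk$ given in the paragraph above the statement. No further computation is needed.
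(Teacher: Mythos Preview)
Your proof is correct and follows essentially the same route as the paper: the Mayer--Vietoris decomposition of $\conf_n \times \disk$, the splitting of $H_i(\confm{n})$ via the section, and the K\"unneth identification, together with the observation that $H_{i-1}(\confm{n-1}) \otimes H_1(S^1)$ dies in $H_i(\confm{n-1})$ because it factors through $H_1(\disk)=0$. The final check that the inclusion $\confm{n-1}\times S^1 \hookrightarrow \confm{n}$ is exactly $\mu$ is also what the paper does.
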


The increasing filtration $$\Filt^i:= \langle  A \mid A \mbox{ is a string that contains at least one } 0 \mbox{ among the first } i \mbox{ entries.}\rangle$$ of the complex $C_*(\Gb{n}, \md)$ introduced in Definition \ref{def:complesso} 
induces a spectral sequence 
\begin{equation}\label{eq:ss_bn}
E_{ij}^2 = H_j(\Br(n-i); \md) \Rightarrow H_{i+j} (\Gb{n}; \md).
\end{equation}
\begin{rem}\label{rem:collapsss}
If the action of $\Gb{n}$ on the module $\md$ is trivial the spectral sequence collapses at $E^2$. This fact can be proved with a quite technical argument: in fact one can see that all the non-zero differentials of the spectral sequence are divided by a coefficient $(1+t)$, where $-t$ correspond to the action of the first standard generator of the group $\Gb{n}$ on the module $\md$. In particular, if the action is trivial, all the differentials of the spectral sequence are trivial (see \cite{calmar} for a detailed analysis of this spectral sequence). Another more elementary argument is the following: the splitting $H_*(\confm{n}) \simeq H_*(\conf_n) \oplus H_{*-1}(\confm{n-1})$ induces the decomposition (see also \cite{gorj}) $$
H_*(\confm{n}) = \oplus_{i \geq 0} H_{*-i} (\conf_{n-i})
$$ that is isomorphic to the $E^2$-term of the spectral sequence above
and the same argument proves the splitting for any system of coefficients where the group $\Gb{n}$ acts trivially.
\end{rem}
As a consequence of the previous remark the $E^\infty$ term of the spectral sequence given in \eqref{eq:ss_bn} is isomorphic to the homology $H_*(\Gb{n}; \md)$.
Moreover the maps $\overline{s}, r$ induce respectively the inclusion of the first column and the projection onto the first column of the spectral sequence.
%TODO 
%aggiungere dettagli o referenze

Hence from the previous description of the spectral sequence and from Proposition \ref{prop:isodellesemplicifazioni} we have the following result.
\begin{prop} \label{prop:image_mu}
The image of $\mu_*$ corresponds to the direct sum of all the columns but the first in the spectral sequence \eqref{eq:ss_bn} for $H_*(\confm{n})$.
\end{prop}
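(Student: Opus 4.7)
My plan is to combine Proposition~\ref{prop:isodellesemplicifazioni} with the column-wise decomposition of $H_*(\confm{n})$ provided by the collapsed spectral sequence~\eqref{eq:ss_bn}.

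First, I would use Remark~\ref{rem:collapsss} to identify $H_*(\confm{n})$ with the $E^\infty$-page of~\eqref{eq:ss_bn} when the coefficients are trivial. Under this identification the $i$-th column is $H_{*-i}(\conf_{n-i})$. Since $\overline{s}_*: H_*(\conf_n) \hookrightarrow H_*(\confm{n})$ is the inclusion of the first column and $r_*$ is the projection onto it, the complement of the first column equals $H_*(\confm{n}, \conf_n) = \ker r_*$, and is therefore the direct sum of all columns of index $i \geq 1$.

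Second, Proposition~\ref{prop:isodellesemplicifazioni} tells me that the map $\mu_*$ yields an isomorphism
$$
H_{i-1}(\confm{n-1}) \;\xrightarrow{\;\sim\;}\; H_i(\confm{n}, \conf_n).
$$
Combining this with the previous step shows that the image of $\mu_*$ in $H_i(\confm{n})$ is precisely the direct sum of columns of index $\geq 1$, which is the desired statement.

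The only point that requires genuine verification is that the iterated application of Proposition~\ref{prop:isodellesemplicifazioni} reproduces the column-by-column splitting of the $E^\infty$-page. I would handle this by induction on $n$: the splitting $H_*(\confm{n}) \simeq H_*(\conf_n) \oplus H_{*-1}(\confm{n-1})$ peels off the first column, and applying the inductive hypothesis to $\confm{n-1}$ identifies $H_{*-1}(\confm{n-1})$ with $\bigoplus_{i \geq 1} H_{*-i}(\conf_{n-i})$. The main (and only mild) obstacle is tracking the filtration indices while iterating the Mayer--Vietoris decomposition used in Proposition~\ref{prop:isodellesemplicifazioni}; since both splittings originate geometrically from peeling off the distinguished point of the configuration, they agree canonically.
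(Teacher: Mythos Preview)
Your proposal is correct and follows essentially the same route as the paper: the paper's proof is the single sentence ``from the previous description of the spectral sequence and from Proposition~\ref{prop:isodellesemplicifazioni}'', which is exactly your combination of Remark~\ref{rem:collapsss} (columns sum to $H_*(\confm{n})$, with $\overline{s}_*,r_*$ giving inclusion and projection onto the first column) together with Proposition~\ref{prop:isodellesemplicifazioni} (image of $\mu_*$ equals $H_*(\confm{n},\conf_n)=\ker r_*$). Your closing inductive remark is a harmless elaboration, not a different argument.
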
 
\begin{rem}\label{rem:Q_mu}
Let $\F$ be a field of characteristic $0$. Then for $i>1$ the map
$$
\mu_*: H_{i-1}(\confm{n-1}) \otimes H_1(S^1) \to H_i(\confm{n})
$$
is an isomorphism. 
\end{rem}
%and induces the splitting
%$$
%H_i(\confm{n}) = H_i(\conf_n) \oplus H_{i-1}(\confm{n-1}).
%$$
%
\begin{prop}\label{prop:tau1}
Let $\tau: H_i(\confm{n}) \to H_i(\confmd{n})$ be the transfer map induced by the double covering $\confmd{n} \to \confm{n}$.
The following diagram commutes:
\begin{equation}\label{diag:trerighe}
\begin{tabular}{c}
\xymatrix @R=2pc @C=2pc {
	H_{i-1}(\Br_n; H_1(S^1 \times P)) \ar[r]^\iota \ar[d]^\simeq & H_{i-1}(\Br_n; H_1(\ddiskP))\ar[d]^\simeq  \\
	H_{i-1}(\confm{n-1}) \otimes H_1(S^1) \ar[d]^{\mu_*} \ar[r] & H_i(\confmd{n}, \conf_n) \\
	H_i(\confm{n}) \ar[r]^{\tau} &H_i(\confmd{n})  \ar[u]_J
}
\end{tabular}
\end{equation}
\end{prop}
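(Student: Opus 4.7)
The top square of diagram \eqref{diag:trerighe} already appears as \eqref{diag:trerighe0}, so I only need to verify that the bottom square commutes. Writing $\tilde{\mu}:\confm{n-1}\times S^1\to\confmd{n}$ for the composition of the inclusion $\partial N\hookrightarrow M$ with the homotopy equivalence $M\simeq\confmd{n}$, I observe that by construction of the Mayer--Vietoris connecting homomorphism the middle horizontal arrow of \eqref{diag:trerighe} is exactly $J\circ\tilde{\mu}_*$ on the summand $H_{i-1}(\confm{n-1})\otimes H_1(S^1)\subset H_i(\partial N)$. Hence the goal reduces to proving
\[
J\circ\tau\circ\mu_*(a\otimes[S^1]) \;=\; J\circ\tilde{\mu}_*(a\otimes[S^1])
\]
for every $a\in H_{i-1}(\confm{n-1})$.

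My strategy is to set up a pullback square of double covers relating $\mu$, $\tilde{\mu}$, $\pi$, and the squaring map $\sigma\colon S^1\to S^1$, $\sigma(e^{it})=e^{2it}$, and then invoke naturality of the transfer under pullbacks of finite coverings. The relevant square is
\[
\begin{array}{ccc}
\confm{n-1}\times S^1 & \xrightarrow{\;\tilde{\mu}\;} & \confmd{n}\\
\id\times\sigma\;\downarrow & & \downarrow\;\pi\\
\confm{n-1}\times S^1 & \xrightarrow{\;\mu\;} & \confm{n}
\end{array}
\]
The key geometric input is the local form of $p\colon\totsp_n\to\conf_n\times\disk$ near a ramification point: from $y^2=\prod_j(z-x_j)$, the coordinate $y$ is transverse to $\confm{n-1}\subset\totsp_n$, and $y\mapsto y^2$ expresses the normal disk bundle of $\confm{n-1}\subset\totsp_n$ as a ramified double cover of the normal disk bundle of $\confm{n-1}\subset\conf_n\times\disk$. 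Restricting to the boundary circle bundles and choosing the radius of $N$ to be $\sqrt{\delta}$ gives the commutativity $\pi\circ\tilde{\mu}=\mu\circ(\id\times\sigma)$.

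Granted the pullback, naturality of the transfer yields $\tau\circ\mu_*=\tilde{\mu}_*\circ\tau_\sigma$, where $\tau_\sigma$ is the transfer of $\id\times\sigma$. Since $\sigma$ is a connected double cover of $S^1$, the identity $\sigma_*\circ\tau_\sigma=2\cdot\id$ on $H_1(S^1)$ forces $\tau_\sigma[S^1]=[S^1]$; hence $\tau_\sigma(a\otimes[S^1])=a\otimes[S^1]$ by the K\"unneth formula, and composing with $J$ gives the desired equality.

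The main obstacle I anticipate is verifying that the displayed square is genuinely a pullback of double coverings, not just a commuting square. This will boil down to a monodromy computation: along the loop $\mu(\{p\}\times S^1)$ the cover $\pi$ has monodromy $-1$, since as $z$ winds once around $p_1$ a single factor $(z-p_1)$ changes sign while the other factors remain in a simply connected region, and the monodromy is trivial along loops in the $\confm{n-1}$ direction. This matches precisely the monodromy pattern of $\id\times\sigma$, so the canonical comparison map from the upper-left corner to the fibre product is a homeomorphism. Once this identification is in place, the naturality of the transfer and the evaluation $\tau_\sigma[S^1]=[S^1]$ are formal.
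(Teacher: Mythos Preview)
Your approach via naturality of the transfer is a clean reformulation of the paper's chain-level argument, but the monodromy computation contains an error. You claim the monodromy of $\pi$ pulled back along $\mu$ is trivial along loops in the $\confm{n-1}$ direction. Consider the loop in $\confm{n-1}$ in which the marked point $p_1$ encircles an unmarked point $p_2$. Under $\mu$ at a fixed angle $e^{it_0}$, the new marked point $z=p_1+\delta(\bar p)e^{it_0}$ tracks $p_1$; since $\delta(\bar p)\le\tfrac12|p_1-p_2|$ throughout, the factor $z-p_2=(p_1-p_2)+\delta e^{it_0}$ has the same winding number as $p_1-p_2$, namely $1$, while $z-p_1=\delta e^{it_0}$ has constant argument. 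Hence $y$ changes sign and the monodromy is $-1$. Equivalently, in your local model $y^2=(z-p_1)c$ with $c=\prod_{j\ge2}(z-p_j)$, the phase of $c$ winds along such loops, so $\pi\circ\tilde\mu$ and $\mu\circ(\id\times\sigma)$ differ by a shear $(p,e^{i\psi})\mapsto(p,e^{i\psi}e^{-i\arg c})$ that cannot be undone globally (there is no continuous square root of $c$ on $\confm{n-1}$). Thus the left column of your square is \emph{not} $\id\times\sigma$, and the square is not the pullback you describe.

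The approach is easily repaired. The genuine pullback of $\pi$ along $\mu$ is a double cover $p\colon\partial N\to\partial N'$ of circle bundles over $\confm{n-1}$; the map $p$ commutes with the bundle projections and restricts to the connected double cover on each circle fibre (this part of your monodromy computation is correct). That alone suffices: for any cycle $A$ representing $a\in H_{i-1}(\confm{n-1})$ one has $p^{-1}(A\times S^1)=A\times S^1$ as a cycle in $\partial N$, whence $\tau_p(a\otimes[S^1])=a\otimes[S^1]$, and the rest of your argument goes through unchanged. The paper's proof sidesteps the global issue by trivialising the circle bundle only over each singular simplex $\sigma_k$, splitting the fibre circle $\gamma_p$ into halves $\gamma'_p,\gamma''_p$ interchanged by the deck involution, and checking directly that $\pi_x\circ\gamma'_p$ is the circle traced by $\mu$; this is exactly your squaring-map picture, carried out simplex by simplex rather than globally.
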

\begin{proof}

Let $\sigma_k$ be a $k$-simplex in $\confm{n-1}$. Then $p = \sigma_k(q)$ is a point in $\confm{n-1}$ and we write $p= (p_1, \{p_2, \ldots, p_n\})$. Let 
$$
\Sigma_p = \{(x,y) \in \C^2 \mid y^2 = (x-p_1)\cdots(x-p_n)\}
$$ be the double covering of $\C$ ramified around $\overline{p} = \{p_1, \ldots, p_n\}$. Then if we consider the projection $\pi: \totsp_n \to \conf_n$ we have $\Sigma_p = \pi^{-1} (\overline{p})$.

Let $\epsilon$ be the automorphism of $\totsp_n$ that maps $( \overline{p}, x,y) \mapsto (\overline{p},x,-y)$. It is clear that the restriction of $\epsilon$ to $\confmd{n}$ is the automorphism $\sigma: \confmd{n} \to \confmd{n}$.

Let $D_p$ be the intersection $N \cap \Sigma_p$. We can assume that $D_p$ is diffeomorphic to a closed disk and the restriction of the projection $\pi_x: (x, y) \mapsto x$ to $D_p$ is a double covering of a small disk around $p_1$ in $\C \setminus \{p_2, \ldots, p_n\}$ ramified in $p_1$.

Let $\overline{\sigma}_k$ be the projection of $\sigma_k$ to $\conf_n$. The restriction of the tubolar neighborhood $N$ to $\overline{\sigma}_k$ is a trivial bundle. So we can define a parametrization $\gamma_p:[0,1]\to \partial D_p$ that is continuous in $p$. Then $\gamma_p$ represents a generator of $H_1(\partial D_p)$.

Let $\zeta$ be the standard generator of $H_1(S^1)$.
Hence the map $H_{i-1}(\confm{n-1}) \otimes H_1(S^1) \to H_i(\confmd{n}, \conf_n) $ is induced by mapping 
$\sigma_k \otimes \zeta \mapsto  \sigma_k \times \gamma $ defined as  $(\sigma_k \times \gamma) (q, t) = ( \overline{\sigma}_k(q), \gamma_{\sigma_k(q)}(t))$.

We can replace $\gamma_p$ by $\gamma'_p + \gamma''_p$ where we define $\gamma'_p (t) := \gamma_p(t/2)$ and $\gamma''_p(t) := \gamma_p((1+t)/2)$, both for $t \in [0,1]$. 

Hence we have that the map $H_{i-1}(\confm{n-1}) \otimes H_1(S^1) \to H_i(\confmd{n}, \conf_n) $ is induced by mapping 
$\sigma_k\otimes \zeta \mapsto  \sigma_k \times (\gamma' + \gamma'')$ defined as above. Moreover $\epsilon(\sigma_k \times \gamma') = \sigma_k \times \gamma''$ and $\epsilon(\sigma_k \times \gamma'') = \sigma_k \times \gamma'$.

Recall that $\mu(p,e^{it}) =  (p_1 + \delta(\overline{p})e^{it}, \overline{p})$. Hence, up to a suitable choice of  the tubular neighborhood $N$ and of the parametrization $\gamma$, we can assume that
$$
p_1 + \delta(\overline{p})e^{it} = \pi_x(\gamma'_{p}(t)).
$$
This implies that $\mu_*(\sigma_k\otimes \zeta) = \sigma_k \times \pi_x(\gamma')= \sigma_k \times \pi_x(\gamma'')$, where we define $ (\sigma_k \times \pi_x(\gamma'))(q,t) = (p_1 + \delta(\overline{p})e^{it},\overline{p})$.

It is now clear that $\sigma_k \times \gamma'$ and $\sigma_k \times \gamma''$ are both liftings of $\mu_*(\sigma_k\otimes \zeta)$ and since $\epsilon$ exchanges the two liftings, we have that 
the map $\tau: H_i(\confm{n}) \to H_i(\confmd{n})$ is the transfer map induced by the double covering $\confmd{n} \to \confm{n}$.
\end{proof}
In the case of $n$ odd a different proof of Proposition \ref{prop:tau1} can be found in \cite[Lem.~58]{bianchi}.

\begin{prop}\label{prop:commut_tau}
The following diagram commutes:
$$
\begin{tabular}{c}
\xymatrix @R=2pc @C=2pc {
H_i(\confm{n})\ar[d]^\simeq \ar[r]^{\tau} &H_i(\confmd{n}) \ar[d]^\simeq \\
H_i(\Gb{n}; \ring[t]/(1+t)) \ar[r]^{1-t} & H_i(\Gb{n}; \ring[t]/(1-t^2))	.
}
\end{tabular}
$$
where in the bottom row we are considering the map induced by the $(1-t)$-multiplication map $C_*(\Gb{n}, \ring[t]/(1+t)) \to C_*(\Gb{n}, \ring[t]/(1-t^2))$ 
\end{prop}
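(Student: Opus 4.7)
The plan is to realise both vertical isomorphisms in the diagram as instances of Shapiro's lemma and then invoke the standard algebraic description of the transfer.

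Let $H \subset \Gb{n}$ denote the index-two subgroup corresponding to the connected double cover $\confmd{n} \to \confm{n}$; concretely, $H$ is the kernel of the homomorphism $\Gb{n} \to \Z/2$ sending the first standard generator $g_1$ to the nontrivial element and every other standard generator to $0$. The assignment
$$
\mathrm{Ind}_H^{\Gb{n}}\ring \,=\, \ring[\Gb{n}/H] \,\longrightarrow\, \ring[t]/(1-t^2), \qquad H \mapsto 1,\ \ g_1 H \mapsto -t,
$$
is a $\Gb{n}$-equivariant isomorphism of modules intertwining the induced action with the prescribed action on $\md$. Shapiro's lemma then yields the right-hand vertical isomorphism
$$
H_i(\confmd{n}) \,=\, H_i(H;\ring) \,\xrightarrow{\ \sim\ }\, H_i(\Gb{n};\ring[t]/(1-t^2)).
$$
For the left-hand side, $\ring[t]/(1+t) \cong \ring$ as $\Gb{n}$-modules (since $-t$ becomes $1$ modulo $(1+t)$), giving the identification $H_i(\confm{n}) = H_i(\Gb{n}; \ring[t]/(1+t))$.

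Next I would recall the standard algebraic description of the transfer associated to a finite cover with subgroup $H \le G$: it is the map in group homology induced by the $G$-equivariant coefficient inclusion
$$
\ring \,\hookrightarrow\, \mathrm{Ind}_H^{G}\ring, \qquad 1 \,\mapsto\, \sum_{\bar g\in G/H} \bar g.
$$
Transported through the identifications above, this inclusion becomes
$$
\ring[t]/(1+t) \,\longrightarrow\, \ring[t]/(1-t^2), \qquad 1 \,\mapsto\, 1 + (-t) \,=\, 1-t,
$$
which is precisely the $(1-t)$-multiplication chain map between the Salvetti complexes appearing in the bottom row of the diagram.

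It remains to check that the geometrically defined $\tau$ of Proposition \ref{prop:tau1} coincides with this algebraic transfer. This is immediate from the construction in the proof of that proposition: a representative chain is lifted explicitly to the sum of its two preimages $\sigma_k \times \gamma' + \sigma_k \times \gamma''$ in the double cover, swapped by the deck involution $\epsilon$, which is exactly the sum-of-preimages formula for the transfer. The only delicate point, and the main obstacle to making the argument fully airtight, is the sign bookkeeping: the identification $g_1 H \mapsto -t$ (rather than $+t$) is forced by the fact that $g_1$ acts on $\md$ by multiplication by $-t$, and it is precisely this minus sign that produces the factor $(1-t)$ rather than $(1+t)$.
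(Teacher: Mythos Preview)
Your proposal is correct and follows essentially the same approach as the paper. The paper's proof is more terse: it observes directly that the chain complex $C_*(\Gb{n},\ring[t]/(1-t^2))$ computes the homology of the double cover, and that the transfer of a chain is the sum of its two lifts, which at the chain level is multiplication by $1+(-t)=1-t$ since the deck involution acts as $-t$. You have packaged the same argument in the language of Shapiro's lemma and the coinduced module, which is a legitimate (and perhaps more transparent) way to say the same thing; your final paragraph invoking the explicit lifts $\sigma_k\times\gamma'+\sigma_k\times\gamma''$ from the proof of Proposition~\ref{prop:tau1} is unnecessary, since $\tau$ is already \emph{defined} there as the transfer of the double covering, but it does no harm.
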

\begin{proof}
%TODO inserire la dimostrazione

The complex $C_*(\Gb{n}, \ring[\Z]) = C_*(\Gb{n}, \ring[t^\pmu])$ computes the homology of the infinite cyclic cover $\confmd{n}^\Z$ associated to the homomorphism $\Gb{n} \to \Z$ that maps the first standard generator to multiplication by $(-t)$ and all other generators to multiplication by $1$.

Hence the complex $C_*(\Gb{n}, \ring[\Z_2]) = C_*(\Gb{n}, \ring[t]/(1-t^2))$ computes the homology of the double cover $\confmd{n}$ and $C_*(\Gb{n}, R) = C_*(\Gb{n}, \ring[t]/(1+t))$ computes the homology of $\confm{n}$.

Since the non-trivial monodromy associated to the double cover $\confmd{n} \to \confm{n}$ is induced by the first generator of $\Gb{n}$, the transfer of a cycle  in $C_*(\Gb{n}, \ring[t]/(1+t))$ to a cycle in $C_*(\Gb{n}, \ring[t]/(1-t^2))$ is given by the multiplication by $(1 -t)$.
\end{proof}

%\section{title}
%\vskip 10pt \emph{-interpretazioni in termini di calcoli di Callegaro-Marin}\vskip 10pt

\begin{rem} \label{rem:J}
Recall the isomorphism $H_i(\confmd{n}) \simeq H_i(\Gb{n}; \ring[t]/(1-t^2))$. Let $\ring$ be a field of characteristic $p$. For $p \neq 2$ the second term decomposes as $H_i(\Gb{n}; \ring[t]/(1+t)) \oplus H_i(\Gb{n}; \ring[t]/(1-t)) $ and moreover for $n$ odd  the term $H_i(\Gb{n}; \ring[t]/(1-t))$ is trivial. Again, this follows from the fact that the module $H_i(\Gb{n}; \ring[t^\pmu])$ has $(1+t)$-torsion and from the homology long exact sequence associated to
	$$
	0 \to \ring[t^\pmu] \stackrel{1-t}{\longrightarrow} \ring[t^\pmu ] \to \ring[t]/(1-t) \to 0.
	$$
	%to\cite[Sec.~4.5]{calmar}). 
	In particular the homology groups $H_i(\confmd{n}) \simeq H_i(\Gb{n}; \ring[t]/(1-t^2))$ and $H_i(\confm{n}) \simeq H_i(\Gb{n}; \ring[t]/(1+t))$ are isomorphic and the isomorphis is induced by the quotient map $\ring[t]/(1-t^2) \to \ring[t]/(1+t)$.
	Hence 
	we can consider the commuting diagram
	$$
	\begin{tabular}{c}
	\xymatrix @R=2pc @C=2pc {
		0 \ar[r]& H_i(\conf_n) \ar[d]^\simeq \ar[r] & H_i(\confmd{n})	\ar[d]^\simeq \ar[r]^J & H_i(\confmd{n}, \conf_n) \ar[d]^\simeq  \ar[r] & 0\\
		0 \ar[r] & H_i(\conf_n) \ar[r] & H_i(\confm{n}) \ar[r]^{\overline{J}} & H_i(\confm{n}, \conf_n) \ar[r] & 0.
	}
	% H_i(\confm{n})
	% H_i(\confmd{n})
	% H_i(\Gb{n}; \ring[t]/(1+t))
	% H_i(\Gb{n}; \ring[t]/(1-t^2))
	\end{tabular}
	$$
	where the last vertical map is an isomorphism from the five lemma. 
	From the right square we have that the map $J$ corresponds to the homomorphism
	$$
\overline{J}:	H_i(\Gb{n}; \ring[t]/(1+t)) \to H_i((\Gb{n}, \Ga{n-1}); \ring[t]/(1+t))
	$$
	associated to the inclusion $\Ga{n-1} \into \Gb{n}$ induced by $\conf_n \into \confm{n}$.  From the short exaxt sequence in the second row of the diagram above we have that the homomorphism $$\overline{J}: H_i(\confm{n}) = H_i(\confm{n}, \conf_n) \oplus H_i(\conf_n) \to H_i(\confm{n}, \conf_n)$$ is the projection on the first term of the direct sum.
\end{rem}

\begin{lem}\label{lem:tau_invertible}
If $p$ is an odd prime or $p=0$ and $\ring$ is a field of characteristic $p$, then the homomorphism
$$
\overline{\tau}:H_i(\Gb{n}; \ring[t]/(1+t)) \stackrel{1-t}{\longrightarrow} H_i(\Gb{n}; \ring[t]/(1-t^2))
$$
is invertible for $n$ odd.
\end{lem}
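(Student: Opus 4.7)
The plan is to reduce the statement to a trivial coefficient calculation, exploiting the splitting already made available in Remark \ref{rem:J}. Since $\F$ has characteristic $\neq 2$, the polynomials $(1+t)$ and $(1-t)$ are coprime in $\F[t]$, so the Chinese Remainder Theorem provides a decomposition of $\F[t]$-modules
$$
\F[t]/(1-t^2) \;\simeq\; \F[t]/(1+t) \;\oplus\; \F[t]/(1-t).
$$
Applying $H_i(\Gb{n};-)$ and invoking Remark \ref{rem:J}, which asserts (via Proposition \ref{prop:homol_pdisp}) that $H_i(\Gb{n}; \F[t]/(1-t)) = 0$ for odd $n$, I obtain that the natural quotient $q: \F[t]/(1-t^2) \to \F[t]/(1+t)$ induces an isomorphism
$$
q_*: H_i(\Gb{n}; \F[t]/(1-t^2)) \xrightarrow{\;\sim\;} H_i(\Gb{n}; \F[t]/(1+t)).
$$

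The second step is to compose $\overline{\tau}$ with $q_*$. At the coefficient level the composite $\F[t]/(1+t) \xrightarrow{(1-t)\cdot} \F[t]/(1-t^2) \xrightarrow{q} \F[t]/(1+t)$ sends $a \mapsto a(1-t) \bmod (1+t) = 2a$, i.e.~it is multiplication by $2$. Since $2$ is invertible in $\F$, this is an automorphism of the coefficient module, hence induces an isomorphism on the homology of $\Gb{n}$ with those coefficients. Combining this with the fact that $q_*$ is already an isomorphism forces $\overline{\tau}$ itself to be an isomorphism.

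No real obstacle arises; the entire argument turns on the coprimality $\gcd(1+t,1-t) = 1$ in characteristic $\neq 2$ and on the vanishing of $H_i(\Gb{n}; \F[t]/(1-t))$ for $n$ odd, both of which are granted by the preceding material. The only point that warrants care is the identification of $q_* \circ \overline{\tau}$ with multiplication by $2$ on coefficients, which is a routine computation in $\F[t]/(1+t)$.
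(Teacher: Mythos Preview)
Your proof is correct and follows essentially the same route as the paper: both use the Chinese Remainder splitting together with the vanishing of $H_i(\Gb{n};\F[t]/(1-t))$ for odd $n$ (Remark~\ref{rem:J}) to identify the target with $H_i(\Gb{n};\F[t]/(1+t))$, and then observe that under this identification $\overline{\tau}$ becomes multiplication by $(1-t)\equiv 2$, which is invertible in characteristic $\neq 2$. Your write-up is in fact slightly more explicit about the composition with the quotient map $q_*$, but the argument is the same.
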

\begin{proof}
This follows since, for odd $n$, the homology group $H_i(\Gb{n}; \ring[t^\pmu])$ has $(1+t)$-torsion (see Proposition \ref{prop:homol_pdisp}
%, \cite[Sec.~4.5]{calmar}
). 
For $p \neq 2$ we have that  $H_i(\Gb{n}; \ring[t]/(1-t)) = 0$ (see Remark \ref{rem:J}) and hence 
$$
H_i(\Gb{n}; \ring[t]/(1-t^2)) \simeq H_i(\Gb{n}; \ring[t]/(1+t))
$$
and the map $\overline{\tau}$ is equivalent to the multiplication map
$$
H_i(\Gb{n}; \ring[t]/(1+t)) \stackrel{1-t}{\longrightarrow} H_i(\Gb{n}; \ring[t]/(1+t))
$$
and $(1-t)$ is invertible.
\end{proof}
\begin{rem} \label{rem:Q_tau_J}
The decompositions $H_i(\confmd{n}) \simeq H_i(\Gb{n}; \ring[t]/(1-t^2)) = H_i(\Gb{n}; \ring[t]/(1+t)) \oplus H_i(\Gb{n}; \ring[t]/(1-t))$ and $H_i(\confm{n}) = H_i(\confm{n}, \conf_n) \oplus H_i(\conf_n)$ give the following consequences for $n$ odd and $\F$ a field of characteristic $0$. Since $H_i(\Gb{n}; \ring[t]/(1-t))$ has Poincar\'e polynomial $(1+q)q^{n-1}$ (Proposition \ref{prop:hrazionale_minus_t}) and since $H_*(\conf_n)$ has Poincar\`e polynomial $(1+q)$ (see \S~\ref{ss:braid_homol}), we have that the map
$$
J: H_i(\confmd{n})  \to H_i(\confmd{n}, \conf_n) 
$$
is an isomorphism for $i>1$.
Moreover the argument of Lemma \ref{lem:tau_invertible} implies that
the map
$$\tau: H_*(\confm{n}) \to H_*(\confmd{n})$$ is injective and its cokernel has Poincar\'e polynomial $(1+q)q^{n-1}$.
\end{rem}

\begin{prop} \label{prop:coker_n_pari}
Let $n$ be even and $\F$ a field of characteristic $0$. Then for $i>1$ the map 
$$
\iota: H_{i-1}(\Br_n; H_1(S^1 \times P))  \to H_{i-1}(\Br_n; H_1(\ddiskP))
$$
is injective and its cokernel has rank $1$ for $i = n-1, n-2$ and $0$ otherwise.
\end{prop}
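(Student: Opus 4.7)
The plan is to exploit the commutative diagram of Proposition~\ref{prop:tau1} in order to reduce $\iota$, after passing through the vertical isomorphisms, to the composition
$$
H_{i-1}(\confm{n-1})\otimes H_1(S^1) \stackrel{\mu_*}{\longrightarrow} H_i(\confm{n}) \stackrel{\tau}{\longrightarrow} H_i(\confmd{n}) \stackrel{J}{\longrightarrow} H_i(\confmd{n},\conf_n).
$$
Both $\mu_*$ and $J$ are isomorphisms in the range $i>1$: for $\mu_*$ this is exactly Remark~\ref{rem:Q_mu}, while for $J$ it follows from the split short exact sequence of Remark~\ref{rem:Cn_sections} together with the fact (see \S\ref{ss:braid_homol}) that $H_i(\conf_n;\F)=0$ for $i\geq 2$ in characteristic zero. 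Hence the injectivity and the cokernel of $\iota$ are identified, for $i>1$, with those of the transfer map $\tau$.

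Next I would invoke Proposition~\ref{prop:commut_tau}, which rewrites $\tau$ as the multiplication-by-$(1-t)$ homomorphism
$$H_i(\Gb{n};\F[t]/(1+t)) \stackrel{1-t}{\longrightarrow} H_i(\Gb{n};\F[t]/(1-t^2)).$$
Because $\F$ has characteristic zero, the Chinese Remainder decomposition $\F[t]/(1-t^2)\simeq \F[t]/(1-t)\oplus \F[t]/(1+t)$ promotes the natural short exact sequence of $\F[t]$-modules
$$0 \to \F[t]/(1+t) \stackrel{1-t}{\longrightarrow} \F[t]/(1-t^2) \longrightarrow \F[t]/(1-t) \to 0$$
to a split one. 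Applying group homology of $\Gb{n}$ therefore splits degreewise, so $\tau$ is a split injection with
$\coker \tau \simeq H_i(\Gb{n};\F[t]/(1-t))$.

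Finally, I would appeal to Proposition~\ref{prop:hrazionale_minus_t}, which for $n$ even gives a complete description of $H_*(\Gb{n};\F[t]/(1-t))$: it is one-dimensional in exactly two consecutive top degrees and vanishes elsewhere. Transporting this back through the degree-shifting isomorphism $H_i(\confmd{n},\conf_n) \simeq H_{i-1}(\Br_n; H_1(\ddiskP))$ recovers the precise values of $i$ for which $\coker \iota$ has rank one.

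The main obstacle here is essentially bookkeeping: carefully tracking the degree shift between the bottom row of the diagram (indexed by $i$, since $\mu_*$ raises the degree by one) and the top row (indexed by $i-1$), and making sure the characteristic zero hypothesis is used decisively both to split the short exact sequence of coefficients (invertibility of $2$) and to kill $H_*(\conf_n)$ above degree one, so that neither $\mu_*$ nor $J$ introduces spurious kernels or cokernels into the identification.
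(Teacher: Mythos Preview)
Your proposal is correct and follows essentially the same route as the paper: the paper's proof simply cites Remark~\ref{rem:Q_mu} (for $\mu_*$) and Remark~\ref{rem:Q_tau_J} (for $J$ and $\tau$), which package precisely the three ingredients you spell out---$\mu_*$ and $J$ are isomorphisms in characteristic zero for $i>1$, and $\tau$ is a split injection with cokernel $H_*(\Gb{n};\F[t]/(1-t))$ computed by Proposition~\ref{prop:hrazionale_minus_t}. Your derivation of the splitting via the Chinese Remainder decomposition is the same argument as the one behind Lemma~\ref{lem:tau_invertible} and Remark~\ref{rem:Q_tau_J}; the only caution is that no further degree shift is needed after identifying $\coker\iota$ with $\coker\tau$, since the index $i$ in the statement already labels the bottom row of the diagram.
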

\begin{proof}
The result follows from Remark \ref{rem:Q_mu} and Remark \ref{rem:Q_tau_J}.
\end{proof}
\begin{thm} \label{teo:tau_restricted}
%\todo{controllare (ripetizione di Lemma \ref{lem:comp_no_4}?)}
Consider the decomposition $H_i(\confm{n}) = H_i(\confm{n}, \conf_n) \oplus H_i(\conf_n)$ associated to the inclusion $\overline{s}:\conf_n \into \confm{n}$ and $H_i(\confmd{n}) = H_i(\confmd{n}, \conf_n) \oplus H_i(\conf_n)$ associated to the inclusion $s:\conf_n \into \confmd{n}$. If $n$ is odd the following inclusion holds: $$\tau(H_*(\conf_n)) \subset H_*(\conf_n)$$
and for $x \in H_*(\conf_n)$ we have that $\tau(x) = 2x$.
\end{thm}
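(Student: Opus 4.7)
The plan is to use the base-change (or ``restriction'') property of the transfer map, combined with the fact stated in Remark \ref{rem:s_sprime} that, for odd $n$, the two sections $s, s'\colon \conf_n \to \confmd{n}$ lifting $\overline{s}$ are homotopic.

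First, I would observe that the preimage of $\overline{s}(\conf_n)$ under the double covering $\pi\colon \confmd{n} \to \confm{n}$ is precisely $s(\conf_n) \sqcup s'(\conf_n)$. Indeed, a point of $\confmd{n}$ lying over $\overline{s}(p) = (p, z(p))$ is forced to have $z$-coordinate $(\max_i|x_i|+1)/2$, and the last coordinate must satisfy $y^2 = p(z(p))$, giving exactly the two choices $y = \pm\sqrt{p(z(p))}$ selected by $s$ and $s'$ (cf.\ Remark \ref{rem:Cn_sections}). Consequently, the pullback of the double covering along $\overline{s}\colon \conf_n \into \confm{n}$ is the trivialized double cover $\conf_n \sqcup \conf_n \to \conf_n$, whose two sheets are the images of $s$ and $s'$.

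Next, I would invoke the naturality of the transfer with respect to pullbacks: given a finite covering $Y \to X$ with transfer $\tau$, a map $f\colon Z \to X$, pullback covering $Z\times_X Y \to Z$ with transfer $\tau_Z$, and induced map $\tilde f\colon Z\times_X Y \to Y$, one has $\tau \circ f_* = \tilde f_* \circ \tau_Z$. In our situation, $\tau_Z$ is the transfer of a trivial double cover, which is the diagonal map $x \mapsto (x,x)$ at the level of chains. Composing with $\tilde f_* = s_* + s'_*$ yields the identity
$$
\tau \circ \overline{s}_* \;=\; s_* + s'_*
$$
of homomorphisms $H_*(\conf_n) \to H_*(\confmd{n})$.

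Finally, for odd $n$, Remark \ref{rem:s_sprime} gives $s_* = s'_*$ on homology, so $\tau \circ \overline{s}_* = 2\,s_*$. Since, by definition, the summand $H_*(\conf_n) \subset H_*(\confm{n})$ is $\overline{s}_*(H_*(\conf_n))$ and the summand $H_*(\conf_n) \subset H_*(\confmd{n})$ is $s_*(H_*(\conf_n))$, this simultaneously yields the inclusion $\tau(H_*(\conf_n)) \subset H_*(\conf_n)$ and the formula $\tau(x) = 2x$ for $x \in H_*(\conf_n)$. The only delicate point is the base-change identity for the transfer, which I would verify using the singular chain-level description of $\tau$ as the sum over local sheets; once the preimage of $\overline{s}(\conf_n)$ is identified as the trivial cover $s(\conf_n) \sqcup s'(\conf_n)$, this is straightforward.
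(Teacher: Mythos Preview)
Your proof is correct and follows essentially the same approach as the paper: identify the preimage of $\overline{s}(\conf_n)$ in $\confmd{n}$ as the trivial double cover $s(\conf_n)\sqcup s'(\conf_n)$, use naturality of the transfer to obtain $\tau\circ\overline{s}_* = s_* + s'_*$, and then apply Remark~\ref{rem:s_sprime} to conclude $\tau\circ\overline{s}_* = 2s_*$. The paper's presentation is slightly more diagrammatic but the argument is the same.
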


\begin{proof}
Since $\tau$ is the transfer map, we can consider the following diagram:
	$$
\begin{tabular}{c}
\xymatrix @R=2pc @C=2pc {
s\conf_n  \sqcup s' \conf_n \ar[d] \ar[r] & \conf_n \ar[d]^{\overline{s}} \\
\confmd{n} \ar[r]& \confm{n}
}
\end{tabular}
$$
where the left vertical map is the natural inclusion and the horizontal maps are $2:1$ projections that induces the transfer map $\tau$ and its restriction to $H_*(\conf_n)$. 
Then we have the following commuting diagram in homology:
$$
\xymatrix{ 
H_*(\conf_n) 
\ar[d]^{\overline{s}_*} \ar[drr]^(0.65){s_*+s'_*} \ar[rr]_(0.44){\tau_{|\overline{s}_*H_*(\conf_n)}} 
&& 
H_*(s(\conf_{n})  \sqcup s'(\conf_{n}))
\ar[d]^{i_*} 
\\
H_*(\confm{n}) 
\ar[rr]^\tau 
&& 
H_*(\confmd{n}) 
 }
$$
Hence given a cycle $x \in H_*(\conf_n)$ we have that %$\tau_{11}(\alpha) = \pi_1(s_*(\alpha)+s'_*(\alpha)) = 2 s_* \alpha$ 
$\tau \overline{s}_* x = s_* x + s'_* x = 2 s_* x \in H_*(\conf_n)$
where the last equality follows because for $n$ odd we have that $s_*=s'_*$ (see Remark \ref{rem:s_sprime}).
%
%Since for $n$ odd the maps $s$ and $s'$ are homotopic, it follows that for $x \in H_*(\conf_n)$ we have $\tau x = s_* x + s'_* x = 2 s_* x \in H_*(\conf_n)$.
\end{proof}

As a consequence of the previous result and Lemma \ref{lem:tau_invertible} we obtain the following.
\begin{cor} \label{cor:tau_restricted}
	If $p$ is an odd prime or $p=0$ and $\ring$ is a field of characteristic $p$ and $n$ is odd, then the projection on $H_i(\confmd{n},\conf_n)$ of the restriction of the homomorphism $\tau$ to $H_i(\confm{n},\conf_n)$
	$$
	\tau_|:H_i(\confm{n},\conf_n) \to H_i(\confmd{n},\conf_n)
	$$
	is an isomorphism.
\end{cor}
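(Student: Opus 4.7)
The plan is to apply the five lemma to a commutative diagram of split short exact sequences whose vertical maps are variants of $\tau$. The two rows are the sequences associated to the sections $\overline{s}: \conf_n \to \confm{n}$ and $s:\conf_n \to \confmd{n}$, namely
$$
0 \to H_i(\conf_n) \xrightarrow{\overline{s}_*} H_i(\confm{n}) \xrightarrow{\overline{J}} H_i(\confm{n},\conf_n) \to 0
$$
and
$$
0 \to H_i(\conf_n) \xrightarrow{s_*} H_i(\confmd{n}) \xrightarrow{J} H_i(\confmd{n},\conf_n) \to 0.
$$
The middle vertical map is the transfer $\tau$ from Proposition \ref{prop:tau1}, while the right vertical map is the candidate isomorphism $\tau_|$, which by definition is the composite of the inclusion $H_i(\confm{n},\conf_n) \hookrightarrow H_i(\confm{n})$ given by the splitting, the map $\tau$, and the projection $H_i(\confmd{n}) \to H_i(\confmd{n},\conf_n)$ given by the splitting.

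Next I would check that the diagram commutes. Theorem \ref{teo:tau_restricted} shows that $\tau \circ \overline{s}_* = 2\, s_*$, so the left square commutes with the left vertical map being multiplication by $2$ on $H_i(\conf_n)$; in particular, $\tau$ respects the direct-sum decompositions, so the right square commutes as well (the right vertical is precisely the induced map on the complementary summands).

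With this in place, I would argue as follows. The left vertical map, multiplication by $2$, is an isomorphism because $\ring$ has characteristic $p \neq 2$. The middle vertical map $\tau$ is an isomorphism: indeed, by Proposition \ref{prop:commut_tau} it corresponds, under the identifications $H_i(\confm{n}) \simeq H_i(\Gb{n};\ring[t]/(1+t))$ and $H_i(\confmd{n}) \simeq H_i(\Gb{n};\ring[t]/(1-t^2))$, to the map $\overline{\tau}$, and Lemma \ref{lem:tau_invertible} says that $\overline{\tau}$ is invertible for odd $n$ in characteristic $\neq 2$. Applying the five lemma to the diagram then forces the right vertical map $\tau_|$ to be an isomorphism.

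I do not anticipate a substantial obstacle: the statement is a clean diagram chase once the ingredients (the splittings of Remark at Definition \ref{def:section}, Theorem \ref{teo:tau_restricted}, and Lemma \ref{lem:tau_invertible}) are combined. The only point requiring care is the bookkeeping of identifications between the topological transfer $\tau$ of Proposition \ref{prop:tau1} and the algebraic multiplication-by-$(1-t)$ map $\overline{\tau}$ of Lemma \ref{lem:tau_invertible}, which is handled by Proposition \ref{prop:commut_tau}.
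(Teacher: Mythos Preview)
Your proposal is correct and essentially the same as the paper's proof. The paper states it more tersely: since $\tau$ is an isomorphism (Lemma \ref{lem:tau_invertible}) and sends the summand $H_i(\conf_n)$ into $H_i(\conf_n)$ (Theorem \ref{teo:tau_restricted}), the induced map on the complementary summands is an isomorphism. Your five-lemma formulation is just a repackaging of this block-upper-triangular argument.

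One small imprecision: you write that ``$\tau$ respects the direct-sum decompositions,'' but Theorem \ref{teo:tau_restricted} only gives $\tau(H_i(\conf_n))\subset H_i(\conf_n)$, not that $\tau$ preserves the second summand. This weaker fact is nonetheless exactly what is needed for the right square to commute (the induced map on quotients is well defined once the subobject is preserved), so the argument goes through unchanged.
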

\begin{proof}
The corollary follows since under the stated conditions the homomorphism $\tau$ is an isomorphism (Lemma \ref{lem:tau_invertible}) and its restriction to $H_i(\conf_n)$ maps to $H_i(\conf_n)$.
\end{proof}

In order to prove the result concerning the odd torsion of the homology $H_i(\Br_n; \sym{g})$ we need to understand the maps 
%$\mu_*$ and 
$J$ in the diagram \eqref{diag:trerighe}.

We have the following result.
	
\begin{lem}\label{lem:ss_incl}
%nella ss la mappa di inclusione è l'iso con la prima colonna
Let $E^2_{ij} =  H_j(\Br(n-i);\ring[t]/(1+t)) \Rightarrow H_i(\Gb{n}; \ring[t]/(1+t))$ be the spectral sequence induced by the
filtration $\Filt$ described above. The inclusion $\Ga{n-1} \into \Gb{n}$ induces the isomorphism of the term $H_j(\Ga{n-1}; \ring[t]/(1+t)) = H_j(\Br(n);\ring[t]/(1+t)) $ with the submodule $E^2_{0j}$ for all $j$.
\end{lem}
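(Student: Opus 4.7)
The plan is to realize the inclusion $\Ga{n-1}\hookrightarrow\Gb{n}$ at the chain level and match its image with the bottom layer of the filtration $\Filt$ that defines the spectral sequence~\eqref{eq:ss_bn}.

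Set $\md := \ring[t]/(1+t)$. The assignment $A\mapsto\overline{0}A$ defines a chain inclusion
$$\iota:C_*(\Ga{n-1},\md)\hookrightarrow C_*(\Gb{n},\md),$$
as witnessed by the boundary relation $\DB(\overline{0}A)=\overline{0}\DA A$ of Definition~\ref{def:complesso}. By the standard functoriality of the Salvetti complex with respect to parabolic subgroup inclusions, $\iota_*$ is the homology map induced by the group inclusion $\Ga{n-1}\hookrightarrow\Gb{n}$. Moreover $\iota$ is a chain isomorphism onto the subcomplex $\Filt^1\subset C_*(\Gb{n},\md)$ consisting of strings with a $0$ in the first entry, so
$$H_j(\Ga{n-1};\md)\cong H_j(\Filt^1),$$
and the composite $H_j(\Ga{n-1};\md)\to H_j(\Gb{n};\md)$ factors through this canonical map.

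On $\md=\ring[t]/(1+t)$ the first standard generator of $\Gb{n}$ acts by $-t=1$, hence trivially. By Remark~\ref{rem:collapsss}, all differentials of the spectral sequence~\eqref{eq:ss_bn} vanish identically; in particular $E^\infty_{pq}=E^1_{pq}=H_q(\Br(n-p);\md)$ for every $p,q$. Consequently the bottom piece $F_0H_j(\Gb{n};\md)$ of the induced filtration equals $E^\infty_{0j}=H_j(\Filt^1)=H_j(\Br(n);\md)=H_j(\Ga{n-1};\md)$, and the canonical map $H_j(\Filt^1)\to H_j(\Gb{n};\md)$ is injective with image this submodule. Identifying $F_0H_j(\Gb{n};\md)$ with $E^2_{0j}$ via the collapsed spectral sequence then gives the claim.

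The only step requiring care is the identification of $\iota$ with the inclusion onto $\Filt^1$; this is routine once one unwinds how a parabolic subgroup inclusion acts on the generators of the Salvetti complex and checks compatibility with the explicit boundary formulas, but it is the only nontrivial ingredient beyond the general spectral-sequence collapse provided by Remark~\ref{rem:collapsss}.
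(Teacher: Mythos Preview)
Your proof is correct and follows essentially the same approach as the paper: both identify the chain-level image of the inclusion $\Ga{n-1}\hookrightarrow\Gb{n}$ with the bottom filtration piece $\Filt^1$ (strings beginning with $\overline{0}$), which corresponds to the column $E^2_{0*}$. The paper's argument is terser---it simply notes that the inclusion sends the $i$-th standard generator to the $(i+1)$-st, hence the image of the $\Ga{n-1}$-complex is exactly $\Filt^1$---while you spell out the chain map $A\mapsto\overline{0}A$ and invoke the collapse from Remark~\ref{rem:collapsss} explicitly; one small caution is that Remark~\ref{rem:collapsss} asserts collapse at $E^2$ rather than $E^1$, but this is harmless here since $E^1_{0j}=H_j(\Filt^1)=H_j(\Br(n))=E^2_{0j}$ on the bottom column.
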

\begin{proof}
The Lemma follows since the inclusion $\conf_n \into \confm{n}$ maps the $i$-th standard generator of $\Ga{n-1}$ to the $(i+1)$-st standard generator of $\Gb{n}$. Hence the image of the complex that computes the homology of $\Ga{n-1}$ is the first term of the filtration $\Filt$.
\end{proof}
\begin{cor}\label{cor:image_mu}
According to the decomposition $H_*(\confm{n})= H_*(\conf_n) \oplus H_*(\confm{n}, \conf_n)$ induced by the section $\conf_n \into \confm{n}$  and the projection $\pi: \confm{n} \to \conf_n$ we have that the image of $\mu_*$ corresponds to $H_*(\confm{n}, \conf_n)$.
\end{cor}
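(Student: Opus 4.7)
The plan is to combine Proposition \ref{prop:image_mu} with Lemma \ref{lem:ss_incl}, using the collapsing of the spectral sequence \eqref{eq:ss_bn} (Remark \ref{rem:collapsss}) to identify the two decompositions of $H_*(\confm{n})$ that are in play. One decomposition comes from the filtration $\Filt$ of the complex $C_*(\Gb{n},\md)$ (splitting $E^\infty$ as a direct sum of columns $E^\infty_{i,*}$, $i \geq 0$); the other comes from the section $\overline{s}:\conf_n \to \confm{n}$ (splitting $H_*(\confm{n}) = H_*(\conf_n) \oplus H_*(\confm{n},\conf_n)$). The goal is simply to match these two decompositions.

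First I would record that, since the action of $\Gb{n}$ on $\ring[t]/(1+t)$ is determined by multiplication by $(-t)$ on $\ring/(1+t) = \ring$, that is, by the trivial action, the spectral sequence \eqref{eq:ss_bn} with this coefficient module collapses at $E^2$ by Remark \ref{rem:collapsss}. Hence $H_*(\confm{n})$ splits as $\bigoplus_{i \geq 0} E^\infty_{i,*} = \bigoplus_{i \geq 0} H_{*-i}(\Br(n-i))$.

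Next, by Lemma \ref{lem:ss_incl} the inclusion $\overline{s}_*:H_*(\conf_n) \into H_*(\confm{n})$ has image exactly the column $i=0$ of this spectral sequence. Therefore the complementary summand $H_*(\confm{n},\conf_n)$, i.e.\ the kernel of the projection $r_*:H_*(\confm{n}) \to H_*(\conf_n)$, coincides with $\bigoplus_{i \geq 1} E^\infty_{i,*}$, the sum of all columns except the first. Finally, Proposition \ref{prop:image_mu} identifies the image of $\mu_*$ with precisely this direct sum of all columns but the first. Putting the two identifications together yields $\mathrm{Im}(\mu_*) = H_*(\confm{n},\conf_n)$, which is the claim.

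There is no real obstacle here; the only thing one has to be mindful of is that the two a priori different splittings (the one induced by the filtration $\Filt$ and the one induced by the section $\overline{s}$) agree, which is exactly what Lemma \ref{lem:ss_incl} provides by pinning down the image of the section as the first column of $E^\infty$.
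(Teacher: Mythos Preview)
Your proposal is correct and follows essentially the same route as the paper's proof: both invoke Proposition \ref{prop:image_mu}, Lemma \ref{lem:ss_incl}, and the collapse of the spectral sequence from Remark \ref{rem:collapsss}. One small remark: in your final paragraph you say that Lemma \ref{lem:ss_incl} alone pins down the compatibility of the two splittings, but knowing only that $\overline{s}_*$ hits the first column does not by itself force $\ker r_*$ to equal the sum of the remaining columns; for that you need the companion statement in Remark \ref{rem:collapsss} that $r_*$ is the projection onto the first column, which you do cite earlier.
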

\begin{proof}
This follows from Lemma \ref{lem:ss_incl} and Proposition \ref{prop:image_mu}, since, as seen in Remark \ref{rem:collapsss}, the spectral sequence of Lemma \ref{lem:ss_incl} collapses at the page $E^2$.
\end{proof}

\begin{thm}
Let $n$ be an odd integer and let $g = (n-1)/2$ and let $\sym{g}= H_1(\surf_n;\Z)$ be the integral symplectic representation of the braid group $\Br_n$.
Then the homology  $H_i(\Br_n; \sym{g})$ is a torsion $\Z$-module
with only $2^j$-torsion.
% and the torsion of its elements is a power of $2$.
%and for any odd prime $p$ 
%the homology $H_i(\Br_n; \sym{g})$
%has no $p$-torsion.	
\end{thm}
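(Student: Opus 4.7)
My plan is to reduce the claim, via a universal coefficient argument, to the vanishing of $H_i(\Br_n; H_1(\surf_n;\F))$ for every field $\F$ of characteristic $p\neq 2$, including $p=0$. Since $\sym{g}$ is free abelian of finite rank and $\Br_n$ has a finite-type classifying space, $H_i(\Br_n;\sym{g})$ is finitely generated, and the universal coefficient sequence
$$
0 \to H_i(\Br_n;\sym{g}) \otimes \F \to H_i(\Br_n;\sym{g}\otimes\F) \to \Tor_\Z(H_{i-1}(\Br_n;\sym{g}),\F) \to 0
$$
shows that vanishing of the middle term for $\F=\Q$ kills the free part, while vanishing for $\F=\F_p$ with $p$ odd rules out $p$-torsion. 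Together these force only $2$-power torsion.

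To prove the vanishing over such an $\F$, I insert $\F$-coefficients into the long exact sequence \eqref{eq:bia2}, which stays exact because $H_1(S^1\times\P)$, $H_1(\ddiskP)$ and $H_1(\surf_n)$ are $\Z$-free, and argue that the map $\iota$ is an isomorphism in every degree. By Proposition \ref{prop:tau1}, $\iota$ is identified with the composition
$$
H_{i-1}(\confm{n-1})\otimes H_1(S^1) \xrightarrow{\mu_*} H_i(\confm{n}) \xrightarrow{\tau} H_i(\confmd{n}) \xrightarrow{J} H_i(\confmd{n},\conf_n).
$$
By Proposition \ref{prop:isodellesemplicifazioni}, $\mu_*$ is an isomorphism onto the summand $H_i(\confm{n},\conf_n)\subset H_i(\confm{n})$, and by Corollary \ref{cor:tau_restricted} the restriction of $J\circ\tau$ to that summand is an isomorphism onto $H_i(\confmd{n},\conf_n)$, precisely because $n$ is odd and $\mathrm{char}\,\F\neq 2$. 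Composing yields $\iota$ an isomorphism of $\F$-vector spaces, so the long exact sequence collapses and $H_i(\Br_n; H_1(\surf_n;\F))=0$ for every $i$.

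The main obstacle is really packaged into Corollary \ref{cor:tau_restricted}, whose proof relies on Lemma \ref{lem:tau_invertible} and ultimately on Proposition \ref{prop:homol_pdisp}: for $n$ odd and $\mathrm{char}\,\F\neq 2$, the module $H_*(\Gb{n};\F[t^\pmu])$ has only $(1+t)$-torsion, so multiplication by $(1-t)$ is invertible on it. Without this, the transfer $\tau$ would fail to give an isomorphism on the relative summand and the $\iota$-factorization would cease to cover $H_i(\confmd{n},\conf_n)$. With this input in hand, the argument becomes essentially a diagram chase through \eqref{diag:trerighe} combined with the universal-coefficient bookkeeping described above; the only subtlety to check is that the splittings from the sections $s$ and $\overline{s}$, the identification $H_k(\Br_n;H_1(\ddiskP))\simeq H_{k+1}(\confmd{n},\conf_n)$, and the factorization $\iota=J\circ\tau\circ\mu_*$ all remain valid after base change to $\F$, which they do because the coefficient modules involved are $\Z$-free.
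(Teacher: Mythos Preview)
Your proposal is correct and follows essentially the same route as the paper: reduce to field coefficients of characteristic $p\neq 2$, identify $\iota$ with $J\circ\tau\circ\mu_*$ via Proposition~\ref{prop:tau1}, and conclude that $\iota$ is an isomorphism by combining the description of $\mu_*$ (Proposition~\ref{prop:isodellesemplicifazioni} together with Corollary~\ref{cor:image_mu}, which you should also cite to pin down that the image of $\mu_*$ is precisely the summand $H_i(\confm{n},\conf_n)$) with Corollary~\ref{cor:tau_restricted}. Your explicit universal-coefficient bookkeeping and the remark that the relevant coefficient modules are $\Z$-free make the passage from field to integer coefficients clearer than the paper's terse ``the result follows from the exact sequence.''
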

\begin{proof}
From the description of the map $\mu_*$ (Proposition \ref{prop:isodellesemplicifazioni} and Corollary \ref{cor:image_mu}), the results about the map $\tau$ (Proposition \ref{prop:tau1}, Lemma \ref{lem:tau_invertible}, Corollary \ref{cor:tau_restricted}) and 
%Corollary \ref{cor:J} 
Remark \ref{rem:J} concerning the map $J$
we have that the map $\iota$ in diagram \eqref{diag:trerighe} is an isomorphism for $n$ odd and $p \neq 2$.
The result follows from the exact sequence of diagram \eqref{eq:bia2}.
\end{proof}

\section{A first bound for torsion order} \label{s:4tor}
In this section we focus on the torsion of order $2^j$ in the integral homology $H_{i}(\Br_n; H_1(\surf_n))$. 
We prove that torsion appears with order at most $4$. 

\begin{lem} \label{lem:no2tor1}
Let $\ring = \Z$. The homology $H_i(\confm{n}) \simeq H_i(\Gb{n}; \ring[t]/(1+t))$ has no $4$-torsion.
\end{lem}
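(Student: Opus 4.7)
My plan is as follows.

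First, I would reinterpret the coefficient module. Modulo $(1+t)$ we have $t=-1$, so the first standard generator of $\Gb{n}$ acts on $\ring[t]/(1+t)\cong\Z$ by multiplication by $-t=1$, i.e.\ trivially, while the remaining generators act trivially by definition. Thus $\ring[t]/(1+t)$ is the trivial $\Gb{n}$-module $\Z$ and we have
$$
H_i(\Gb{n};\ring[t]/(1+t))\;\cong\;H_i(\Gb{n};\Z)\;\cong\;H_i(\confm{n};\Z).
$$

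Second, I would reduce to the classical braid group. Iterating the splitting $H_*(\confm{n})\cong H_*(\conf_n)\oplus H_{*-1}(\confm{n-1})$ recalled in Remark \ref{rem:collapsss} (which applies since the coefficients are trivial) yields
$$
H_i(\confm{n};\Z)\;\cong\;\bigoplus_{k=0}^{n-1}H_{i-k}(\conf_{n-k};\Z),
$$
so it suffices to show that $H_j(\Br_m;\Z)=H_j(\conf_m;\Z)$ has no $4$-torsion for every $m,j\ge 0$.

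Third, this last statement is a classical result of Arnol'd, Fuks, Cohen and Vainshtein: all torsion in the integral homology of the classical braid groups is $2$-primary of order exactly $2$. One way to deduce it is to run the Bockstein spectral sequence at $p=2$ starting from the mod-$2$ description $H_*(\Br_m;\F_2)=\F_2[x_0,x_1,x_2,\ldots]_{\deg=m}$ recalled in Subsection \ref{ss:braid_homol}: the Bockstein $\beta$ is a derivation vanishing on $x_0$ and sending $x_i$ to $x_{i-1}^2$ for $i\ge 1$, which pairs factors off inside each monomial and forces the collapse $E_\infty=E_2$ at the prime $2$. Collapse at $E_2$ is equivalent to every $2$-primary element of $H_j(\Br_m;\Z)$ being annihilated by $2$, hence to the vanishing of $4$-torsion.

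The main obstacle is precisely this last step, namely pinning down the action of the mod-$2$ Bockstein on the polynomial generators $x_i$; this is standard but the only nonformal ingredient in the plan, and it is available in \cite{cohen, fuks, vain}. Once it is granted, the lemma follows by concatenating the first two reductions with the classical computation.
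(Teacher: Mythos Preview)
Your proposal is correct and follows essentially the same route as the paper: reduce via the splitting of Remark~\ref{rem:collapsss} to the classical braid groups, and then invoke the absence of $4$-torsion in $H_*(\Br_m;\Z)$. The only difference is cosmetic---the paper simply cites Vainshtein's theorem (no $p^2$-torsion in integral braid cohomology, hence in homology by universal coefficients), whereas you sketch the underlying Bockstein argument; your extra first paragraph, making explicit that $\ring[t]/(1+t)$ is the trivial module, is a helpful clarification the paper leaves implicit.
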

\begin{proof}
%In \cite[Thm.~C]{gorj} Gorjunov proves the splitting
As seen in Remark \ref{rem:collapsss}, we have a splitting
$$
H_q(\confm{n}; \Z) = \bigoplus_{i=0}^\infty H_{q-i} (\conf_{n-i}; \Z).
$$ 
Moreover we have that the integer cohomology of braid group has no $p^2$ torsion for any prime $p$ (\cite[Thm.~3]{vain}). Hence the result follows from the universal coefficients formula.
\end{proof}

\begin{lem} \label{lem:no2tor2}
Let $R = \Z$. For $n$ odd, the homology $H_i(\confmd{n}) \simeq H_i(\Gb{n}; \ring[t]/(1-t^2))$ has no $4$-torsion. 
\end{lem}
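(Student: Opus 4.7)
The plan is to mirror the proof of Lemma~\ref{lem:no2tor1} by comparing $H_{*}(\confmd{n})$ with $H_{*}(\confm{n})$. The main tool is the short exact sequence of $\Gb{n}$-modules
\[
0 \to \Z[t]/(1+t) \xrightarrow{\,1-t\,} \Z[t]/(1-t^{2}) \to \Z[t]/(1-t) \to 0,
\]
whose associated long exact sequence reads
\[
\cdots \to C_{i+1} \xrightarrow{\delta_{i+1}} A_{i} \xrightarrow{\tau} B_{i} \to C_{i} \xrightarrow{\delta_{i}} A_{i-1} \to \cdots ,
\]
with $A_{i}:=H_{i}(\confm{n};\Z)$, $B_{i}:=H_{i}(\confmd{n};\Z)$ and $C_{i}:=H_{i}(\Gb{n};\Z[t]/(1-t))$; by Proposition~\ref{prop:commut_tau} the middle map identifies with the transfer of the covering $\confmd{n}\to\confm{n}$.

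By Remark~\ref{rem:J} applied to a field $\F$ of characteristic $\neq 2$, $H_{i}(\Gb{n};\F[t]/(1-t))=0$ for $n$ odd. Universal coefficients then forces $C_{i}$ to be a finite $2$-group with trivial rank and no odd-primary torsion. The main intermediate claim is that $C_{i}$ has exponent $2$, i.e.\ is an $\F_{2}$-vector space. I would attack this via the spectral sequence~\eqref{eq:ss_bn} with $\md=\Z[t]/(1-t)$: because the subgroups $\Br(n-i)\subset\Gb{n}$ coming from the filtration~$\Filt$ act trivially on $\md$, one has $E^{2}_{ij}=H_{j}(\Br(n-i);\Z)$, which has no $p^{2}$-torsion by Vainshtein, and the differentials pick up factors of~$2$ coming from the non-trivial action of the first generator of $\Gb{n}$, which should force the surviving torsion on the $E^{\infty}$-page to have exponent~$2$.

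Granted this claim, the long exact sequence splits into the short exact sequence
\[
0 \to A_{i}/\mathrm{image}(\delta_{i+1}) \to B_{i} \to \ker(\delta_{i}) \to 0 .
\]
Since $C_{i+1}$ has exponent~$2$, $\mathrm{image}(\delta_{i+1})\subseteq A_{i}[2]$: any $c$ with $2c=0$ gives $2\delta(c)=\delta(2c)=0$. An order argument using Lemma~\ref{lem:no2tor1} then shows $A_{i}/\mathrm{image}(\delta_{i+1})$ has no $4$-torsion---if $a+B$ had order~$4$ with $B\subseteq A_{i}[2]$, then $4a\in B$ gives $8a=0$, forcing $2a=0$ by Lemma~\ref{lem:no2tor1}, a contradiction. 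Similarly $\ker(\delta_{i})\subseteq C_{i}$ has exponent~$2$. To rule out $4$-torsion arising from the extension itself I would appeal to the $\Z[\Z/2]$-module structure of $B_{i}$ coming from the deck transformation, together with Theorem~\ref{teo:tau_restricted} and Corollary~\ref{cor:tau_restricted}, which furnish enough splittings across the decomposition $H_{*}(\confmd{n})=H_{*}(\conf_{n})\oplus H_{*}(\confmd{n},\conf_{n})$ to ensure the extension does not produce new $4$-torsion.

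The main obstacle is the intermediate claim that $C_{i}=H_{i}(\Gb{n};\Z[t]/(1-t))$ has exponent~$2$ for $n$ odd. If the spectral sequence analysis turns out to be too delicate, an alternative approach is to work directly with the splitting $H_{*}(\confmd{n})=H_{*}(\conf_{n})\oplus H_{*}(\confmd{n},\conf_{n})$: the first summand has no $p^{2}$-torsion by Vainshtein, and the second fits into the long exact sequence~\eqref{eq:bia2}, which relates it to $H_{*}(\confm{n-1})$ (no $4$-torsion by Lemma~\ref{lem:no2tor1}). Care must be taken to avoid circularity with the main theorem being proved, for example by combining the required bounds via induction on $n$.
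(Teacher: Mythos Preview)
Your approach is genuinely different from the paper's and, as written, has two real gaps.

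\textbf{The paper's method.} The paper does not compare $\confmd{n}$ to $\confm{n}$ at all for this lemma. Instead it applies the Bockstein criterion directly: a finitely generated abelian group has no $4$-torsion iff the rank of the $\beta_2$-homology on its $\F_2$-reduction equals its rational rank. So the proof (i) records the rational Poincar\'e polynomial of $H_*(\Gb{n};\Q[t]/(1-t^2))$, (ii) writes down $\beta_2$ explicitly on the $\F_2$-basis of Proposition~\ref{prop:generators}, and (iii) decomposes $(H_*(\Gb{n};\F_2[t]/(1-t^2)),\beta_2)$ into small subcomplexes isomorphic to boolean-lattice chain complexes, almost all of which are acyclic. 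The surviving Poincar\'e polynomial matches the rational one, and the lemma follows. This is a hands-on computation, but it is self-contained and uses only the explicit generators already available from \cite{calmar}.

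\textbf{Gaps in your argument.} First, the claim that $C_i=H_i(\Gb{n};\Z[t]/(1-t))$ has exponent $2$ is the crux, and your spectral-sequence sketch does not establish it. The $E^2$-page $H_j(\Br(n-i);\Z)$ has free summands in many bidegrees, and ``differentials pick up a factor of $2$'' does not by itself force the abutment to be $2$-torsion; you would need to control both the differentials and the filtration extensions, which is exactly the kind of work the paper's Bockstein computation replaces. Second, even granting that claim, your extension
\[
0 \to A_i/\mathrm{im}(\delta_{i+1}) \to B_i \to \ker(\delta_i) \to 0
\]
can produce $4$-torsion: $0\to\Z/2\to\Z/4\to\Z/2\to 0$ is the basic obstruction, and nothing you cite (the $\Z[\Z/2]$-structure, Theorem~\ref{teo:tau_restricted}, Corollary~\ref{cor:tau_restricted}) rules it out without further argument. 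Your fallback via \eqref{eq:bia2} feeds $H_*(\Br_n;H_1(\surf_n))$ back into the bound you are trying to establish for it, and ``induction on $n$'' does not break the loop because Lemma~\ref{lem:no2tor2} is used for the \emph{same} $n$ in the proof of Theorem~\ref{thm:4tor}.

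If you want to salvage the comparison idea, the cleanest route is still the Bockstein test: compute $\beta_2$ on the explicit $\F_2$-generators and match dimensions with $\Q$, which is exactly what the paper does.
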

\begin{proof} 
It will suffice to show that the dimension over $\F_2$ of the homology of the complex $(H_*(\Gb{n}; \F_2[t]/(1-t^2)), \beta_2)$, where $\beta_2$ is  the Bockstein homomorphism, is the same as the dimension over $\Q$ of $H_*(\Gb{n}; \Q[t]/(1-t^2))$ (see \cite[Thm.~3E.4]{hatcher_at}).

According to \cite[Thm.~6.1, case $r=2$ and $n$ odd]{lehr04}, the Poincar\'e polynomial of the homology groups
$H_*(\Gb{n}; \Q[t]/(1-t^2)) 
%= H_*(B(4,2,n); \Q)
$
is
$$
P(\Gb{n},t)= 
%\left\{
%\begin{array}{cl}
(1+t)(1+t+t^2+ \cdots + t^{n-1}). 
%& 
%\mbox{ if } n \mbox{ is odd,} \\
%(1+t)(1+t+t^2+ \cdots + t^{n-1}) + (t^{n-1} +t^n) & \mbox{ otherwise.}
%\end{array}
%\right. 
$$

The explicit computation of the Bockstein homomorphism $\beta_2$ of the homology group $H_*(\Gb{n}; \F_2[t]/(1-t^2))$ is the following. Let $$0 \to \Z_2[t]/(1-t^2) \stackrel{i_2}{\longrightarrow} \Z_4[t]/(1-t^2) \stackrel{\pi_2}{\longrightarrow} \Z_2[t]/(1-t^2) \to 0$$ be the short exact sequence of coefficients. Then (see Prop.~\ref{prop:generators})
$$(1-t) z_{c+1} x_{i_1} \cdots x_{i_k} = \pi_2 ((1-t) z_{c+1} x_{i_1} \cdots x_{i_k} )$$
and
$$
\DB (1-t) z_{c+1} x_{i_1} \cdots x_{i_k} = \sum_{
	\scsc{j=1, \ldots, k}{i_j>1}
}
2 (1-t) z_{c+1} x_{i_1} \cdots x_{i_j-1}^2 \cdots x_{i_k} =
$$
$$
= i_2 \left(\sum_{
	\scsc{j=1, \ldots, k}{i_j>1}
}
(1-t) z_{c+1} x_{i_1} \cdots x_{i_j-1}^2 \cdots x_{i_k} \right)
$$ and hence, using the notation introduced in Proposition \ref{prop:generators} we have
\begin{equation}\label{bockstein1}
\beta_2 \gp (z_c, x_0x_{i_1} \cdots x_{i_k})   =\sum_{
	\scsc{j=1, \ldots, k}{i_j>1}
} \gp (z_c, x_0 x_{i_1} \cdots x_{i_j-1}^2 \cdots x_{i_k}) 
\end{equation}
for all generators of the form given in \eqref{generators1}.
Moreover 
$$\frac{\DB(z_{c+1}x_{i_1} \cdots x_{i_k})}{(1+t)} = \pi_2 \left( \frac{1}{(1+t)}\left( \DB(z_{c+1}x_{i_1} \cdots x_{i_k}) - 2\sum_{
	\scsc{j=1, \ldots, k}{i_j>1}
}
z_{c+1} x_{i_1} \cdots x_{i_j-1}^2 \cdots x_{i_k} ) \right) \right) $$
and
$$
\DB \left( \frac{1}{(1+t)}\left( \DB(z_{c+1}x_{i_1} \cdots x_{i_k}) - 2\sum_{
		\scsc{j=1, \ldots, k}{i_j>1}
	}
	z_{c+1} x_{i_1} \cdots x_{i_j-1}^2 \cdots x_{i_k} ) \right) \right) =
$$
$$
=\DB \left( \frac{1}{(1+t)}\left(- 2\sum_{
	\scsc{j=1, \ldots, k}{i_j>1}
}
z_{c+1} x_{i_1} \cdots x_{i_j-1}^2 \cdots x_{i_k} ) \right) \right) =
$$
$$
= -2 \sum_{
	\scsc{j=1, \ldots, k}{i_j>1}
} \frac{\DB(z_{c+1} x_{i_1} \cdots x_{i_j-1}^2 \cdots x_{i_k} ) }{1+t}=
i_2 \left( \sum_{
	\scsc{j=1, \ldots, k}{i_j>1}
} \frac{\DB(z_{c+1} x_{i_1} \cdots x_{i_j-1}^2 \cdots x_{i_k} ) }{1+t} \right)
$$
hence we have
\begin{equation}\label{bockstein2}
\beta_2 \gs (z_c, x_0x_{i_1} \cdots x_{i_k}) =\sum_{
	\scsc{j=1, \ldots, k}{i_j>1}
}  \gs (z_c, x_{0} x_{i_1}  \cdots x_{i_j-1}^2 \cdots  x_{i_k})
\end{equation}
for all generators of the form given in \eqref{generators2}.

\begin{df} \label{def:modules}
Let $a,b$ be two non-negative integers, with  $a \in \N_{>0}$, $b \in \{0,1\}.$ 
Let $I = (i_1, \ldots, i_k)$, $J = (j_1, \ldots, j_h)$, where we assume that:
\begin{enumerate}[label=(\roman*)]
	\item $j_1 < \cdots <j_h$,
%	\item[ii)] $\min I \geq 1$,
	\item $\min J \geq 2$,
	\item for all $s \in 1, \ldots, k$ there exists an integer $t \in 1, \ldots, h$ such that $i_s+1 = j_t$ 
\end{enumerate}  
We define the following sub-modules of $H_*(\Gb{n}; \F_2[t]/(1-t^2))$:
%\begin{enumerate}
%\item[i)] 
$$\MM{c,a,b,I,J} := \langle
\gs (z_c, x_0^a x_1^b x_{i_1}^2 \cdots x_{i_k}^2\epsilon(x_{j_1})\cdots \epsilon(x_{j_h})) | \mbox{ where }\epsilon(x_{j_t}) = x_{j_t} \mbox{ or }  x_{j_t-1}^2 \rangle.$$
and
$$\MMp{c,a,b,I,J} := \langle
\gp (z_c, x_0^a x_1^b x_{i_1}^2 \cdots x_{i_k}^2\epsilon(x_{j_1})\cdots \epsilon(x_{j_h})) | \mbox{ where }\epsilon(x_{j_t}) = x_{j_t} \mbox{ or }  x_{j_t-1}^2 \rangle.$$
%\end{enumerate}
\end{df}
\begin{rem} \label{rem:Jvuoto}
Notice that the condition (iii) above implies that if $J= \emptyset$ then also $I = \emptyset$ and hence $\MM{c,a,b,I,J}$ and $\MMp{c,a,b,I,J}$ have rank $1$ concentrated in degree $c+b$ and $c+b+1$ respectively. 
\end{rem}

The module $\MM{c,a,b,I,J}$ and $\MMp{c,a,b,I,J}$ are  free $\Z_2[t]/(1+t)$-modules, closed for $\beta_2$, as follows from formulas \eqref{bockstein1} and \eqref{bockstein2}.

If $J \neq \emptyset$, the complexes $(\MM{c,a,b,I,J}, \beta_2)$ and $(\MMp{c,a,b,I,J}, \beta_2)$ are acyclic. This fact can be proven by the same argument used in \cite[Lem.~4.4]{cal06}.
The argument can be expressed with the following statement:
\begin{lem} \label{lem:boolean}
Let $\bool{J}$ be the chain complex with $\F_2$ coefficients associated to the boolean lattice of the subsets of $J$. The complexes $(\MM{c,a,b,I,J}, \beta_2)$ and $(\MMp{c,a,b,I,J}, \beta_2)$ are isomorphic to $\bool{J}$. In particular if $J \neq \emptyset$ the complexes $(\MM{c,a,b,I,J}, \beta_2)$ and $(\MMp{c,a,b,I,J}, \beta_2)$ are acyclic.
\end{lem}
\begin{proof}[Proof of the Lemma \ref{lem:boolean}.]
Let first construct an isomorphism $\theta$ between the boolean complex $\bool{J}$ and the complex $\MM{c,a,b,I,J}$. We can map the generator $e_K$ of $\bool{J}$ associated to a subset $K$ of $J$ to the element $$ \theta(e_k):= \gs (z_c, x_0^a x_1^b x_{i_1}^2 \cdots x_{i_k}^2\epsilon(x_{j_1})\cdots \epsilon(x_{j_h})),$$ where $\epsilon(x_{j_t}) = x_{j_t}$ if $j_t \notin K$ and $\epsilon(x_{j_t}) =  x_{j_t-1}^2 $ if $j_t \in K$. It is easy to check that $\theta \circ d = \beta_2 \circ \theta$. Similarly we can construct an isomorphism $\theta': \bool{J} \to \MMp{c,a,b,I,J}$ with
$$
\theta'(e_J):= \gp (z_c, x_0^a x_1^b x_{i_1}^2 \cdots x_{i_k}^2\epsilon(x_{j_1})\cdots \epsilon(x_{j_h}))
$$
and check that $\theta' \circ d = \beta_2 \circ \theta'$.
\end{proof}

We recall from Proposition \ref{prop:generators} that the elements of the form $\gp (z_c, x_0 x_{i_1} \cdots x_{i_k})$ and $\gs (z_c, x_0 x_{i_1} \cdots x_{i_k})$ are a free set of generators of $H_*(\Gb{n}; \F_2[t]/(1-t^2))$.

We claim that for any pair of distinct modules $\MM{c,a,b,I,J}$ or $\MMp{c,a,b,I,J}$ we have disjoint set of generators.
%\todo{dimostrare} 
The case of modules of the form $\MM{c,a,b,I,J}$ can be proved as follows (the case of modules of the form $\MMp{c,a,b,I,J}$ is analogous).  
Let
$$
\gs_0 = \gs (z_c, x_0^a x_1^b x_{i_1}^2 \cdots x_{i_k}^2\epsilon(x_{j_1})\cdots \epsilon(x_{j_h}))
$$
with $\epsilon(x_{j_t}) = x_{j_t} \mbox{ or }  x_{j_t-1}^2$ be a generator of $\MM{c,a,b,I,J}$. We can choose an element $\gs_1$ of the form 
$$
\gs_1 = \gs (z_c, x_0^a x_1^b x_{i_1}^2 \cdots x_{i_k}^2\epsilon'(x_{j_1})\cdots \epsilon'(x_{j_h}))
$$
such that $\gs_0$ appears as a summand in $\beta_2(\gs_1).$ The constrains on the multi-indexes $I$ and $J$ and formula \eqref{bockstein2} imply that such an element $\gs_1$ exists if and only for at least one index $l$ we have that $\epsilon(x_{j_l}) = x_{j_l-1}^2$. If such an element $\gs_1$ exists we have that $\gs_1 \in \MM{c,a,b,I,J}$ and we say that $\gs_0$ \emph{lifts} to $\gs_1$. Hence in a finite number of steps we have that $\gs_0$ lifts to $$\gs (z_c, x_0^a x_1^b x_{i_1}^2 \cdots x_{i_k}^2x_{j_1}\cdots x_{j_h}).
$$  This implies that an element $\gs_0$ does not belong at the same time to two different modules
$\MM{c,a,b,I,J}$ and $\MM{c',a',b',I',J'}.$ 

Therefore the modules $\MM{c,a,b,I,J}$ for all admissible $c,a,b,I,J$ are in direct sum and the modules $\MMp{c,a,b,I,J}$ for all admissible $c,a,b,I,J$ are in direct sum.
Moreover every generator 
$\gp (z_c, x_0 x_{l_1} \cdots x_{l_k})$ or $\gs (z_c, x_0 x_{l_1} \cdots x_{l_k})$ appears in a suitable 
complex  $\MM{c,a,b,I,J}$ or $\MMp{c,a,b,I,J}$. 
%\todo{dimostrare}
Let us prove this in the case of a generator of the form $\gp (z_c, x_0 x_{l_1} \cdots x_{l_k})$, the other case being analogous. We can write the monomial $x_0x_{l_1} \cdots x_{l_k}$ as
$$
x_{q_1}^{p_{q_1}} \cdots x_{q_r}^{p_{q_r}} 
$$
with $q_1 < q_2 < \cdots < q_r$. %, where we assume $p_{q_s} > 0$ for all $s$ and we set $p_s = 0 $ if $s \neq q_t$ for all $t$.
Then we define the strictly ordered multi-index $J$ as follows: $j \in J$ if and only if $j>1$ and one of the following conditions is satisfied:
\begin{enumerate}
	\item  $p_j$ is odd;
	\item $p_j=0$ and $p_{j-1}$ is even and non-zero.
\end{enumerate}
Moreover if $p_j$ is odd we set $\epsilon(x_j) = x_j$, otherwise we set $\epsilon(x_j) = x_{j-1}^2$.
Next we define the multi-index $I$ suitably in the unique way such that 
$$
x_{q_1}^{p_{q_1}} \cdots x_{q_r}^{p_{q_r}} = x_0^{p_0} x_1^{p_1} x_{i_1}^2 \cdots x_{i_k}^2\epsilon(x_{j_1})\cdots \epsilon(x_{j_h}).
$$
It is straightforward to check that 
$$
\gs (z_c, x_0^{p_0} x_1^{p_1} x_{i_1}^2 \cdots x_{i_k}^2\epsilon(x_{j_1})\cdots \epsilon(x_{j_h})) = \gs (z_c,  x_{q_1}^{p_{q_1}} \cdots x_{q_r}^{p_{q_r}} ) = 
$$
$$
=\gs (z_c, x_0 x_{l_1} \cdots x_{l_k})
$$
and that the multi-indexes $I$ and $J$ satisfies the condition of Definition \ref{def:modules}.

Hence 
%let $\MMM$ be 
$h_{i}(n,2)$ is the direct sum of all admissible modules $\MM{c,a,b,I,J}$ and 
%let $\MMMp$ be 
$h_{i}'(n,2)$ is
the direct sum of all admissible modules $\MMp{c,a,b,I,J}$ and we recall (equation \eqref{eq:decomposizione}) that
$$
H_*(\Gb{n}; \F_2[t]/(1-t^2)) = h_{i}(n,2) \oplus h_{i}'(n,2).
$$
This direct sum decomposition  implies that the homology $H_{\beta_2}$ of the complex $(H_*(\Gb{n};$ $\F_2[t]/(1-t^2)), \beta_2)$ is given as follows:
$$
H_{\beta_2} = \bigoplus \MM{c,a,b,\emptyset, \emptyset}
\oplus
\bigoplus \MMp{c,a,b,\emptyset, \emptyset}
$$
for $c$ even, $a \in \N_{>0}$, $b \in \{0,1\}$. In fact if $J = \emptyset$ then also $I = \emptyset$ and for all non-empty $J$ we have from Lemma \ref{lem:boolean} that the complexes $(\MM{c,a,b,I,J},\beta_2)$ and $(\MMp{c,a,b,I,J},\beta_2)$ are acyclic.

Using Remark \ref{rem:Jvuoto}  it is easy to check that the complex $H_{\beta_2}$ has Poincar\'e polynomial $(1+t)(1+t+t^2+ \cdots + t^{n-1})$, hence the lemma follows.
\end{proof}

\begin{lem} \label{lem:tau_torsion}
Let $\ring = \Z$. For $n$ odd the cokernel of the homomorphism
$$\tau:H_i(\confm{n}) \to H_i(\confmd{n})
$$
has no $4$-torsion.
\end{lem}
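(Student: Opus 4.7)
The plan is to use Proposition~\ref{prop:commut_tau} to realise~$\tau$ as multiplication by~$(1-t)$ at the chain level, and then to compute it explicitly on the bases~$\cB'$ and~$\cB''$ of Definition~\ref{def:BB}. The key identities are: $\tau(\widetilde{\omega}^{(1)}_{2i,j,b})=(1-t)z_{2i+1}x_0^{j-1}x_1^b=\widetilde{\omega}^{(2)}_{2i,j,b}$; and, using $(1-t)=2-(1+t)$ together with the vanishing of $(1+t)\omega^{(2)}_{2i,j,b}=\DB(z_{2i+1}x_0^{j-1}x_1^b)$ in $H_i(\confmd{n})$ (it is a boundary in $C_*(\Gb{n},\Z[t]/(1-t^2))$), one obtains $\tau(\omega^{(1)}_{2i,j,b})=(1-t)\,\omega^{(2)}_{2i,j,b}=2\,\omega^{(2)}_{2i,j,b}$.

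Consequently, $\tau$ sends the free submodule $\langle\cB'\rangle\subset H_i(\confm{n})$ into $\langle\cB''\rangle\subset H_i(\confmd{n})$, and in these bases the restricted map is represented by a diagonal matrix with entries in $\{1,2\}$; its cokernel is therefore a direct sum of copies of~$\Z/2$ and has no $4$-torsion. By Lemmas~\ref{lem:no2tor1} and~\ref{lem:no2tor2}, the torsion subgroups $T^{(1)}\subset H_i(\confm{n})$ and $T^{(2)}\subset H_i(\confmd{n})$ are $\Z/2$-vector spaces; since $2\tau(x)=\tau(2x)=0$ for $x\in T^{(1)}$, the map $\tau$ induces a homomorphism $T^{(1)}\to T^{(2)}$, whose cokernel is a quotient of $T^{(2)}$ and hence again $\Z/2$-annihilated.

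The main obstacle is the assembly step: to conclude that the \emph{full} cokernel of~$\tau$ has no $4$-torsion, one needs that $\langle\cB'\rangle$ and $\langle\cB''\rangle$ are direct summands of their ambient groups, so that $\tau$ becomes block-diagonal with respect to the decomposition (free)\,$\oplus$\,(torsion) and the full cokernel is the direct sum of the two $\Z/2$-annihilated pieces above. This strengthens condition~(ii) of Proposition~\ref{prop:BB}, which only asserts linear independence of the reductions of $\cB',\cB''$ modulo an arbitrary prime; the additional input required is that these reductions project to bases of the respective free quotients, which is delicate at $p=2$. I expect this to follow from the cellular nature of the elements~$\widetilde{\omega}$ and~$\omega$ combined with the splitting $H_*(\confm{n})=\bigoplus_{i\geq 0}H_{*-i}(\conf_{n-i})$ of Remark~\ref{rem:collapsss} and its analogue for~$\confmd{n}$. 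Once this direct-summand property is verified, the conclusion follows.
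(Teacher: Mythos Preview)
Your computation of $\tau$ on the bases $\cB',\cB''$ is exactly the paper's argument: the paper derives the same identities $\tau(\omega^{(1)}_{2i,j,b})=2\,\omega^{(2)}_{2i,j,b}$ and $\tau(\widetilde\omega^{(1)}_{2i,j,b})=\widetilde\omega^{(2)}_{2i,j,b}$, and then finishes in a single sentence, appealing only to the facts that source and target have no $4$-torsion (Lemmas~\ref{lem:no2tor1}--\ref{lem:no2tor2}) and that $\tau$ is an isomorphism modulo every odd prime (Lemma~\ref{lem:tau_invertible}). The paper does not spell out the assembly step beyond this.

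You are right that the assembly step is the subtle one, and your caution is warranted: abstractly, a free subgroup of maximal rank whose reductions are linearly independent modulo every prime (conditions (i)--(ii) of Proposition~\ref{prop:BB}) need \emph{not} be a direct summand --- take $\langle(2,1)\rangle\subset\Z\oplus\Z/2$ --- and in that situation the diagonal form of $\tau$ on $\cB'\to\cB''$ with entries in $\{1,2\}$ does not by itself preclude $4$-torsion in the cokernel (in that toy example, with $A=\Z$, $\tau(1)=2\cdot(2,1)=(4,0)$, one gets $\coker\tau\cong\Z/4\oplus\Z/2$). The paper is terse at precisely this point, and your proposal, while more explicit about the difficulty, leaves it unresolved (``I expect this to follow\dots''); as written the argument is therefore incomplete. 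One route that bypasses the splitting question is the short exact sequence of $\Gb{n}$-modules
\[
0\longrightarrow \Z[t]/(1+t)\xrightarrow{\ 1-t\ }\Z[t]/(1-t^2)\longrightarrow\Z[t]/(1-t)\longrightarrow 0,
\]
whose long exact sequence in group homology embeds $\coker\tau_i$ into $H_i\bigl(\Gb{n};\Z[t]/(1-t)\bigr)$; one can then attack the latter directly with a Bockstein analysis of the same kind as in Lemma~\ref{lem:no2tor2}.
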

\begin{proof}
We recall that in Definition \ref{def:BB} we introduced the sets $\cB' \subset H_i(\confm{n};\Z)$  and $\cB'' \subset H_i(\confmd{n};\Z)$. As stated in Proposition \ref{prop:BB}, these are free sets of generators of a maximal free $\Z$-submodule of $H_i(\confm{n};\Z)$  and $H_i(\confmd{n};\Z)$ respectively.

To describe the cokernel we see that the map 
$$\tau:H_i(\confm{n}; \Z) \to H_i(\confmd{n}; \Z)
$$ 
acts on the elements of $\cB'$ as follows:
\begin{eqnarray}
\tau: \omega_{2i,j,0}^{(1)} & \mapsto  (1 - t ) &\omega_{2i,j,0}^{(2)} = 2 \omega_{2i,j,0}^{(2)}, \label{rat_gen1}\\
\tau: \widetilde{\omega}_{2i,j,0}^{(1)} & \mapsto \phantom{(1-t)}& \widetilde{\omega}_{2i,j,0}^{(2)},\label{rat_gen2}\\
\tau: \omega_{2i,j,1}^{(1)}  & \mapsto (1 - t )& \omega_{2i,j,1}^{(2)} = 2 \omega_{2i,j,1}^{(2)},\label{rat_gen3}\\
\tau: \widetilde{\omega}_{2i,j,1}^{(1)} & \mapsto \phantom{(1-t)}& \widetilde{\omega}_{2i,j,1}^{(2)}.\label{rat_gen4}
\end{eqnarray}
Hence the homomorphism $\tau$ acts diagonally and maps each element of $\cB'$ to $1$ or $2$ times the corresponding element of $\cB''$.

Since the $\Z$-modules $H_i(\confm{n};\Z)$ and $H_i(\confmd{n};\Z)$ have no $4$-torsion and  $\tau$ is an isomorphism mod $p$ for any odd prime, it follows that, with integer coefficients, the cokernel of $\tau$ has no $4$-torsion.
\end{proof}

Let us consider homology with coefficient in the ring $\ring = \Z$. As stated in Corollary \ref{cor:image_mu}, the image of  $\mu_*$ is the submodule $H_i(\confm{n}, \conf_n)$ in $H_i(\confm{n})$ and we consider the composition $\iota = J \circ \tau \circ \mu_*: H_{i-1}(\confm{n-1}) \otimes H_1(S^1) \to H_i(\confmd{n}, \conf_n)$.

\begin{lem} \label{lem:comp_no_4}
For odd $n$ the cokernel of the composition $J \circ \tau \circ \mu_*$ has no $4$-torsion.
\end{lem}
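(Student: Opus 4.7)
The plan is to reduce the statement to the $4$-torsion freeness of $\coker(\tau)$ established in Lemma~\ref{lem:tau_torsion}, by means of a snake lemma argument. First, combining Proposition~\ref{prop:isodellesemplicifazioni} with Corollary~\ref{cor:image_mu}, I would identify $\mu_*$ as an isomorphism from $H_{i-1}(\confm{n-1}) \otimes H_1(S^1)$ onto the direct summand $H_i(\confm{n}, \conf_n) \subset H_i(\confm{n})$. Under this identification, the composition $J \circ \tau \circ \mu_*$ becomes precisely the map $\bar{\tau}\colon H_i(\confm{n}, \conf_n) \to H_i(\confmd{n}, \conf_n)$ induced by $\tau$ on the relative summands of the splittings of Remark~\ref{rem:globalsection}. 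So it suffices to prove that $\coker(\bar{\tau})$ has no $4$-torsion.

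Next I would assemble the commutative diagram of split short exact sequences
$$
\xymatrix @R=2pc @C=2pc{
0 \ar[r] & H_i(\conf_n) \ar[d]^{\cdot 2} \ar[r] & H_i(\confm{n}) \ar[d]^{\tau} \ar[r] & H_i(\confm{n}, \conf_n) \ar[d]^{\bar{\tau}} \ar[r] & 0 \\
0 \ar[r] & H_i(\conf_n) \ar[r] & H_i(\confmd{n}) \ar[r] & H_i(\confmd{n}, \conf_n) \ar[r] & 0
}
$$
whose commutativity, for $n$ odd, is exactly Theorem~\ref{teo:tau_restricted}: the restriction of $\tau$ to $H_*(\conf_n)$ lies in $H_*(\conf_n)$ and acts as multiplication by $2$. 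Applying the snake lemma yields an exact tail
$$
H_i(\conf_n)/2 H_i(\conf_n) \longrightarrow \coker(\tau) \longrightarrow \coker(\bar{\tau}) \longrightarrow 0,
$$
so $\coker(\bar{\tau})$ is the quotient of $\coker(\tau)$ by a subgroup $A$ which is the image of the $\F_2$-vector space $H_i(\conf_n)/2H_i(\conf_n)$; in particular $A$ has exponent $2$.

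The last step is the purely algebraic observation: if $B$ is a finitely generated abelian group with no $4$-torsion and $A \subseteq B$ has exponent $2$, then $B/A$ has no $4$-torsion either. Indeed, the $2$-primary part of such a $B$ is automatically an $\F_2$-vector space (any $\Z/4$ summand would contribute $4$-torsion), $A$ must lie inside that subspace, and the quotient of an $\F_2$-vector space by a subspace is again an $\F_2$-vector space; the free part and the odd primary parts are unaffected. Combined with Lemma~\ref{lem:tau_torsion}, which furnishes the $4$-torsion freeness of $\coker(\tau)$, this yields the conclusion.

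I do not expect any serious obstacle: the argument is formal once Theorem~\ref{teo:tau_restricted} is invoked to secure the commutativity of the leftmost square, and the crucial $4$-torsion information has already been extracted in Lemma~\ref{lem:tau_torsion} via the explicit basis analysis with $\cB'$ and $\cB''$. The only point worth double checking is that the identification of $J \circ \tau \circ \mu_*$ with $\bar{\tau}$ really holds at the level of direct summands — but this is immediate from the fact that $\mu_*$ lands in $H_i(\confm{n},\conf_n)$ and $J$ is the projection onto the corresponding relative summand.
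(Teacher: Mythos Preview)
Your proof is correct and uses the same two key inputs as the paper---Theorem~\ref{teo:tau_restricted} (that $\tau$ restricted to $H_*(\conf_n)$ is multiplication by $2$) and Lemma~\ref{lem:tau_torsion} (that $\coker\tau$ has no $4$-torsion)---to control $\coker\tau_{22}$. The only difference is cosmetic: the paper carries out the element chase by hand (given $4x_2\in\operatorname{im}\tau_{22}$, it manufactures a preimage of $8x_2$ under $\tau$ and invokes Lemma~\ref{lem:tau_torsion}), whereas you package the same chase via the snake lemma and the observation that a quotient of a $4$-torsion-free group by an exponent-$2$ subgroup is again $4$-torsion-free.
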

\begin{proof}
First we can consider the homomorphism $\overline{s}_*:H_*(\conf_n) \to H_*(\confm{n})$ induced by the inclusion $\overline{s}:\conf_n \into \confm{n}$, the decomposition $H_*(\confmd{n}) = H_*(\conf_n) \oplus H_*(\confmd{n}, \conf_n)$, with $\pi_1$ and $\pi_2= J$ respectively the projections on the first and the second summand,  and hence the map  $\tau_{11}:H_*(\conf_n) \to H_*(\conf_n)$ defined by the composition
$$
H_*(\conf_n) \stackrel{\overline{s}_*}{\longrightarrow} H_*(\confm{n}) \stackrel{\tau}{\longrightarrow} H_*(\confmd{n}) \stackrel{\pi_1}{\longrightarrow} H_*(\conf_n).
$$
We can consider the following diagram:
$$
\xymatrix{ H_*(\conf_n) \ar[d]^{\overline{s}_*} \ar[drr]^(0.65){s_*+s'_*} \ar[rr]_(0.44){\tau_{|\overline{s}_*H_*(\conf_n)}} \ar@/^1pc/[rrr]^{\tau_{11}}
	&& H_*(s(\conf_{n})  \sqcup s'(\conf_{n}))\ar[d]^{i_*} \ar[r] & H_*(\conf_n) \\ H_*(\confm{n}) \ar[rr]^\tau && H_*(\confmd{n}) \ar[ur]^{\pi_1}  }
$$
From Theorem \ref{teo:tau_restricted} we have that that $\tau_{11} = 2 \Id_{H_*(\conf_n)}$ and
%Moreover we claim that 
$\pi_2 \tau\overline{s}_*(H_*(\conf_n))=0$. %This follows since $s_*$ is the left inverse of $\pi_1$ and hence for $x \in H_*(\conf_n)$ we have that $\pi_2(\tau(\overline{s}_*(x))) = \pi_2(s_*(x) + s'_*(x) ) = 2 \pi_2(s_*(x))=0$.
Now, let $x_2 \in H_*(\confmd{n}, \conf_n)$ and let $\tau_{22}: H_*(\confm{n}, \conf_n) \to H_*(\confmd{n}, \conf_n)$ be the map induced by $\tau$ by restricting to $H_*(\confm{n}, \conf_n)$ and projecting to $H_*(\confmd{n}, \conf_n)$. If there exists $y \in H_*(\confm{n})$ such that $\pi_2 \tau (y) = 4 x_2$, then let $x_1:= \pi_1(\tau(y))$. We can consider $-x_1 + 2 y \in H_*(\confm{n})$ and we have that $\tau(-x_1 + 2 y) = -2 x_1 + 2 (x_1 +4x_2) = 8 x_2$. Since the cokernel of $\tau$ has only $2$-torsion it follows that $2x_2 = 0$ in $\coker \tau$ and finally, since $\pi_2 \tau \overline{s}_*(H_*(\conf_n))= 0$, $2x_2 = 0$ in $\coker \tau_{22}$.
\end{proof}

%hence the image is included in the kernel of the projection to $H_i(\conf_n)$. The homomorphism $\tau$ maps  $H_i(\confm{n}, \conf_n)$ to the submodule $H_i(\confmd{n}, \conf_n)$ of  $H_i(\confmd{n})$, because this is the kernel of the projection to $H_i(\conf_n)$.
%Hence, since from Lemma \ref{lem:tau_torsion}  the cokernel of the homomorphism $\tau$ has no $4$-torsion, the same holds for the cokernel of the composition $J \circ \tau \circ \mu_*: H_{i-1}(\confm{n-1}) \otimes H_1(S^1) \to H_i(\confmd{n}, \conf_n)$.

From Lemma \ref{lem:no2tor1} and \ref{lem:comp_no_4} we have that, with integer coefficients, the kernel and the cokernel of the map $$\iota:H_{i-1}(\Br_n; H_1(S^1 \times P)) \to H_{i-1}(\Br_n; H_1(\ddiskP))$$ in diagram \eqref{eq:bia2} have no $4$-torsion. 
Hence we have (see also \cite{bianchi}):
\begin{thm}\label{thm:4tor}
For $n$ odd the homology $H_{i}(\Br_n; H_1(\surf_n))$ computed with coefficients in the ring $\ring = \Z$ has torsion of order at most $4$. \qed
\end{thm}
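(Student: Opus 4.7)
The plan is to feed the long exact sequence \eqref{eq:bia2} into a standard group-theoretic lemma. From \eqref{eq:bia2} I extract, for each $i$, a short exact sequence
\begin{equation*}
0 \to \coker(\iota_i) \to H_{i-1}(\Br_n; H_1(\surf_n)) \to \ker(\iota_{i-1}) \to 0,
\end{equation*}
where $\iota_j: H_{j-1}(\Br_n; H_1(S^1 \times P)) \to H_{j-1}(\Br_n; H_1(\ddiskP))$ denotes the map $\iota$ at each level. The task is then reduced to controlling the $2$-primary torsion of $\ker(\iota_{i-1})$ and $\coker(\iota_i)$: since the previous main theorem already shows that for $n$ odd the group $H_{i}(\Br_n; \sym{g})$ is a torsion $\Z$-module with only $2^j$-torsion, odd-primary torsion does not enter the discussion.

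For the kernel, I would observe that via the identifications of Section \ref{sec:exact_seq} the source $H_{j-1}(\Br_n; H_1(S^1 \times P))$ is isomorphic to $H_{j-1}(\confm{n-1})$; hence $\ker(\iota_{i-1})$ embeds into a group that, by Lemma \ref{lem:no2tor1}, has no $4$-torsion, so the kernel has no $4$-torsion either. For the cokernel, the factorization $\iota = J \circ \tau \circ \mu_*$ (together with Proposition \ref{prop:isodellesemplicifazioni} and Corollary \ref{cor:image_mu} identifying the image of $\mu_*$ with $H_i(\confm{n}, \conf_n)$) puts us exactly in the situation of Lemma \ref{lem:comp_no_4}, giving that $\coker(\iota_i)$ has no $4$-torsion.

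To finish, I invoke the purely algebraic fact: if $0 \to A \to B \to C \to 0$ is a short exact sequence of abelian groups in which the $2$-primary parts of $A$ and $C$ are elementary abelian (no $4$-torsion), then the $2$-primary part of $B$ has exponent dividing $4$. Indeed, given $b \in B$ with $8b=0$, its image in $C$ satisfies $8\overline{b}=0$, hence $2\overline{b}=0$ since $C$ has no $4$-torsion, so $2b$ lifts an element of $A$; then $4(2b) = 8b = 0$ inside $A$, which by hypothesis forces $2(2b) = 4b = 0$. Applying this to the short exact sequence above yields the theorem.

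The easy part is the algebraic short-exact-sequence argument; the real content has already been deposited in Lemmas \ref{lem:no2tor1}, \ref{lem:no2tor2}, \ref{lem:tau_torsion} and \ref{lem:comp_no_4}. The main conceptual obstacle of the whole section is therefore the proof of Lemma \ref{lem:comp_no_4}, which relies on combining the transfer identity $\tau \circ \overline{s}_* = 2 s_*$ (from Theorem \ref{teo:tau_restricted}) with the bound on $\coker \tau$ from Lemma \ref{lem:tau_torsion}; once that is in hand, the final theorem is essentially a bookkeeping step.
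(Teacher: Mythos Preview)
Your proposal is correct and matches the paper's own argument essentially line for line: extract the short exact sequence from \eqref{eq:bia2}, bound the kernel using Lemma~\ref{lem:no2tor1} (since it sits inside $H_{*}(\confm{n-1})$) and the cokernel using Lemma~\ref{lem:comp_no_4}, then conclude via the elementary extension lemma. The paper leaves the final algebraic step implicit (hence the \qed), whereas you spell it out; also note that Lemma~\ref{lem:no2tor2} is not actually needed for this theorem---only Lemmas~\ref{lem:no2tor1} and~\ref{lem:comp_no_4} (the latter resting on Lemma~\ref{lem:tau_torsion} and Theorem~\ref{teo:tau_restricted}) are invoked.
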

%The map $\tau$, restricted to the image of $\mu_*$, maps suitable generators of the $\Z$-module $H_i(\confm{n})$ to $1$ or $2$ times suitable generators of the $\Z$-module $H_i(\confmd{n})$.

\section{No $4$-torsion}\label{sec:no4tor}
In this section we will show that the homology $H_{i}(\Br_n; H_1(\surf_n))$ for $n$ odd actually has only $2$-torsion.

In order to prove this we will consider the following short exact sequence associated to \eqref{eq:bia2}, with coefficients in $\Z_2$ and in $\Z$.
$$
0 \to \coker \iota \to H_{i}(\Br_n; H_1(\surf_n)) \to \ker \iota \to 0
$$

Let us fix the number $n$. From Theorem \ref{thm:4tor} we can assume that $H_{i}(\Br_n; H_1(\surf_n; \Z)) = \Z_2^{a_i} \oplus \Z_4^{b_i}$.
Since the modules $\ker \iota$ and $\coker \iota$ have no $4$-torsion, we can assume $$\coker (\iota_i :H_{i}(\Br_n; H_1(S^1 \times P;\Z)) \to H_{i}(\Br_n; H_1(\ddiskP;\Z) ) = \Z_2^{u_i}$$ and $$\ker (\iota_i :H_{i}(\Br_n; H_1(S^1 \times P;\Z)) \to H_{i}(\Br_n; H_1(\ddiskP;\Z) ) = \Z_2^{v_i}$$
and clearly we have 
$$
u_i + v_{i-1} = a_i + 2 b_i.
$$
Moreover, with coefficients in $\Z_2$, we have
$$
H_{i}(\Br_n; H_1(\surf_n; \Z_2)) = \Z_2^{a_i+a_{i-1} + b_i + b_{i-1}}.
$$
Let $$\overline{u}_i := \rk \coker (\iota_i :H_{i}(\Br_n; H_1(S^1 \times P ;\Z_2)) \to H_{i}(\Br_n; H_1(\ddiskP ;\Z_2) ) $$
and 
$$\overline{v}_i := \rk \ker (\iota_i :H_{i}(\Br_n; H_1(S^1 \times P ;\Z_2)) \to H_{i}(\Br_n; H_1(\ddiskP;\Z_2) ). $$
It follows that $H_*(\Br_n; H_1(\surf_n; \Z))$ has no $4$-torsion if and only if 
$$2 \sum_i (u_i + v_i) = \sum_i (\overline{u}_i  + \overline{v}_i ). $$

Hence we can compute the rank of the modules above.

A basis of the homology $H_{i}(\Br_n; H_1(S^1 \times P;\Z_2))$ is given as follows. 
Following \cite{calmar}, the homology $H_*(\Gb{n}; \F_2[t]/(1+t))$ for $n$ odd is generated, as an $\F_2[t]$-module, by the classes of the form
\begin{equation}\label{generators1Z2}
\gpu (z_c, x_0 x_{i_1} \cdots x_{i_k}) :=z_{c+1} x_{i_1} \cdots x_{i_k}
\end{equation}
 and
\begin{equation}\label{generators2Z2}
\gsu(z_c, x_0 x_{i_1} \cdots x_{i_k}):=\frac{\DB(z_{c+1}x_{i_1} \cdots x_{i_k})}{(1+t)}
\end{equation}
where we assume $0 \leq i_1 \leq \cdots i_k$ and $c$ even.

In particular the image of $\mu_*$ if generated by all elements $\gpu (z_c, x_0 x_{i_1} \cdots x_{i_k})$ and all elements $\gsu(z_c, x_0 x_{i_1} \cdots x_{i_k})$ with $c>0$.

As seen in Section \ref{s:4tor} the homology $H_*(\Gb{n}; \F_2[t]/(1-t^2))$ for $n$ odd is generated, as an $\F_2[t]$-module, by the following classes already introduced in \eqref{generators1} and \eqref{generators2}:
\begin{equation*}
\gp (z_c, x_0 x_{i_1} \cdots x_{i_k}) :=(1-t)z_{c+1} x_{i_1} \cdots x_{i_k}
\end{equation*}
and
\begin{equation*}
\gs(z_c, x_0 x_{i_1} \cdots x_{i_k}):=\frac{\DB(z_{c+1}x_{i_1} \cdots x_{i_k})}{(1+t)}
\end{equation*}
where we assume $0 \leq i_1 \leq \cdots i_k$ and $c$ even.
The kernel of $J$ is generated by the classes 
$\gs(z_c, x_0 x_{i_1} \cdots x_{i_k})$ for $c=0$ as one can see that these classes generate the image of $s_*$.

The map $\tau$ acts as follows:
\begin{eqnarray*}
\tau: \gpu (z_c, x_0 x_{i_1} \cdots x_{i_k}) & \mapsto & \gp (z_c, x_0 x_{i_1} \cdots x_{i_k});\\
\tau: \gsu (z_c, x_0 x_{i_1} \cdots x_{i_k}) &\mapsto & 0.
\end{eqnarray*}

\begin{rem}\label{rem:basi_mod_2}
A basis of $\coker (\iota_i :H_{i}(\Br_n; H_1(S^1 \times P;\Z_2)) \to H_{i}(\Br_n; H_1(\ddiskP;\Z_2) )$ is
given by the elements $\gs (z_c, x_0 x_{i_1} \cdots x_{i_k})$ with $c>0$, of degree $n$ and homological dimension $i+1$, while a basis of $ \ker (\iota_i :H_{i}(\Br_n; H_1(S^1 \times P;\Z_2)) \to H_{i}(\Br_n; H_1(\ddiskP;\Z_2) )$ is given by the elements $\gsu (z_c, x_0 x_{i_1} \cdots x_{i_k})$ with $c>0$, of degree $n$ and homological dimension $i+1$.
In particular, in order to have odd degree $c$ must be even.
Clearly this two basis are in bijection and we have $\overline{u}_i = \overline{v}_i$.
\end{rem}
 
%$\rk \coker (\iota_i :H_{i}(\Br_n; H_1(S^1 \times P);\Z_2) \to H_{i}(\Br_n; H_1(\ddiskP);\Z_2) ) $
%$\rk \ker (\iota_i :H_{i}(\Br_n; H_1(S^1 \times P);\Z_2) \to H_{i}(\Br_n; H_1(\ddiskP);\Z_2) ). $

In order to describe the corresponding map with integer coefficient we recall that in Section \ref{s:4tor} we described basis
 $\cB'$, $\cB''$ generating the homology of $H_i(\confm{n};\Q) $ and $H_i(\confmd{n};\Q)$ and spanning a maximal free $\Z$-submodule of $H_i(\confm{n};\Z) $ and $H_i(\confmd{n};\Z)$. The action of $\tau$ with respect to these basis is given in equations (\ref{rat_gen1}--\ref{rat_gen4}). The elements of $\cB'$ (resp.~$\cB''$) of the form $\omega_{2i,j,\epsilon}^{(1)}$ (resp.~$\omega_{2i,j,\epsilon}^{(2)}$) map, modulo $2$, to elements of the form $\gsu$ (resp.~$\gs$) and in particular the elements the form $\omega_{2i,j,\epsilon}^{(1)}$ (resp.~$\omega_{2i,j,\epsilon}^{(2)}$) with $i=0$ map to elements of the form $\gsu(z_c, \ldots)$ (resp.~$\gs(z_c, \ldots)$) with $c=0$.
The elements of $\cB'$ (resp.~$\cB''$) of the form $\widetilde{\omega}_{2i,j,\epsilon}^{(1)}$ (resp.~$\widetilde{\omega}_{2i,j,\epsilon}^{(2)}$) map, modulo $2$, to elements of the form $\gpu$ (resp.~$\gp$).
Let $$w_i = | \{ \omega_{0,j,\epsilon}^{(1)} \in H_i(\confm{n};\Q)\} | = 
| \{ \omega_{0,j,\epsilon}^{(2)} \in H_i(\confmd{n};\Q)\} |.
$$
From the Universal Coefficients Theorem and from the description of $\tau$ given in equations (\ref{rat_gen1}--\ref{rat_gen4}) we have that
$$
\sum_i u_i = \sum_i  \frac{ \overline{u}_i - w_i}{2} +\sum_i  w_i
$$
and
$$
\sum_i v_i =  \sum_i  \frac{\overline{v}_i - w_i}{2}.
$$
Then it is straightforward to see that 
$$2 \sum_i (u_i + v_i) = \sum_i (\overline{u}_i  + \overline{v}_i ) $$
and hence we have proved the following result.
\begin{thm} \label{th:no4tor}
For odd $n$ the homology $H_{i}(\Br_n; H_1(\surf_n;\Z))$ has only $2$-torsion. \qed
\end{thm}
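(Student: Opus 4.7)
The plan is to upgrade the $4$-torsion bound of Theorem \ref{thm:4tor} to a $2$-torsion bound by a rank-counting argument comparing integer and $\F_2$ homology. I would exploit the short exact sequence
$$
0 \to \coker \iota_i \to H_{i}(\Br_n; H_1(\surf_n)) \to \ker \iota_{i-1} \to 0
$$
extracted from \eqref{eq:bia2}, valid with both $\Z$ and $\Z_2$ coefficients. Write $H_{i}(\Br_n; H_1(\surf_n; \Z)) = \Z_2^{a_i} \oplus \Z_4^{b_i}$, which is allowed by Theorem \ref{thm:4tor}. Since, by Lemmas \ref{lem:no2tor1} and \ref{lem:comp_no_4}, both $\ker \iota$ and $\coker \iota$ have no $4$-torsion, they are elementary $2$-groups, so I can set $\coker \iota_i = \Z_2^{u_i}$ and $\ker \iota_i = \Z_2^{v_i}$ to obtain the identity $u_i + v_{i-1} = a_i + 2b_i$, and correspondingly $\overline{u}_i, \overline{v}_i$ for the $\F_2$-coefficient versions. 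The absence of $4$-torsion, i.e.\ $b_i = 0$ for all $i$, will be equivalent to showing
$$
2\sum_i (u_i + v_i) \;=\; \sum_i (\overline{u}_i + \overline{v}_i).
$$

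Next I would pin down the right-hand side by using the explicit $\F_2[t]$-bases of $H_*(\Gb{n}; \F_2[t]/(1+t))$ and $H_*(\Gb{n}; \F_2[t]/(1-t^2))$ recalled in Proposition \ref{prop:generators}, together with the fact (Corollary \ref{cor:image_mu}) that the image of $\mu_*$ is spanned by the generators $\gpu(z_c,\cdots),\gsu(z_c,\cdots)$ with $c>0$ and the fact (Remark \ref{rem:J}) that $J$ kills exactly the generators with $c=0$. From Proposition \ref{prop:commut_tau}, $\tau$ sends $\gpu \mapsto \gp$ and $\gsu \mapsto 0$, so passing through the commuting square the map $\iota$ on the distinguished bases is diagonal: it sends $\gpu$-classes isomorphically to $\gp$-classes and annihilates $\gsu$-classes. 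This gives a clean bijection between the basis of $\coker \iota_i$ and the basis of $\ker \iota_i$ in $\F_2$ coefficients, yielding $\overline{u}_i = \overline{v}_i$, both counted by the number of $\gs$-generators (equivalently $\gsu$-generators) with $c>0$ in degree $n$ and homological dimension $i+1$.

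For the left-hand side I would use the free bases $\cB', \cB''$ from Definition \ref{def:BB} and Proposition \ref{prop:BB}, on which $\tau$ acts diagonally via the formulas (\ref{rat_gen1}--\ref{rat_gen4}): it sends $\omega^{(1)}_{2i,j,\epsilon}\mapsto 2\omega^{(2)}_{2i,j,\epsilon}$ and $\widetilde\omega^{(1)}_{2i,j,\epsilon}\mapsto \widetilde\omega^{(2)}_{2i,j,\epsilon}$. The only contribution to the integral cokernel is then from the $\omega$-generators (giving a factor of $2$), and a comparison with the $\F_2$-reduction shows that the $\omega^{(1)}_{0,j,\epsilon}$ (those with $i=0$, counted by some $w_i$) map mod $2$ into $\ker J$, hence do \emph{not} contribute to the relevant cokernel of $\iota$ with $\Z_2$-coefficients. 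Combining this with the Universal Coefficients Theorem one obtains
$$
u_i = \tfrac{\overline u_i - w_i}{2} + w_i, \qquad v_i = \tfrac{\overline v_i - w_i}{2},
$$
and summing yields exactly $2\sum(u_i+v_i)=\sum(\overline u_i+\overline v_i)$, forcing all $b_i=0$.

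The main obstacle will be the bookkeeping step: carefully matching the integral basis elements $\omega^{(1)}, \widetilde\omega^{(1)}$ and their images under $\tau$ with the $\F_2$-basis elements $\gsu, \gpu$, and tracking which integral generators reduce into $\ker J$ versus $\coker \iota$ after passing to mod $2$ coefficients. In particular one must verify that the ``diagonal'' interpretation of $\iota$ on the integral side is faithful enough that the UCT applied componentwise produces precisely the counts $u_i$ and $v_i$; the contribution of the $i=0$ pieces (the $w_i$ correction) is what makes the two sides of the final identity balance, and I would double-check this by running the argument on low-dimensional examples from Table \ref{tab:conti}.
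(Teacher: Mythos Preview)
Your proposal is correct and follows essentially the same approach as the paper's proof: the same short exact sequence from \eqref{eq:bia2}, the same rank-counting identity $2\sum(u_i+v_i)=\sum(\overline u_i+\overline v_i)$, the same $\F_2$-basis computation yielding $\overline u_i=\overline v_i$, and the same use of the integral bases $\cB',\cB''$ with the $w_i$ correction coming from the $\omega_{0,j,\epsilon}$ elements. One small imprecision to fix in your bookkeeping: the image of $\mu_*$ contains \emph{all} the $\gpu$-generators (for every even $c\geq 0$) together with the $\gsu$-generators for $c>0$, and the kernel of $J$ is spanned only by the $\gs$-classes with $c=0$ (not the $\gp$-classes); this does not affect the argument but should be stated correctly.
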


%$\coker (\iota_i :H_{i}(\Br_n; H_1(S^1 \times P);\Z) \to H_{i}(\Br_n; H_1(\ddiskP);\Z) ) = \Z_2^{u_i}$

%$\ker (\iota_i :H_{i}(\Br_n; H_1(S^1 \times P);\Z) \to H_{i}(\Br_n; H_1(\ddiskP);\Z) ) = \Z_2^{v_i}$

\section{Stabilization and computations}\label{sec:stab}

\subsection{Stablization results}
Applying the results of Wahl and Randal-Willians (\cite{W-RW}) for the stability of family of groups with twisted coefficients  it is possible to prove  that the groups $H_i(\Br_n; H_1(\surf_n))$ stabilize for all $i$. 
In particular (see see \cite[Thm.~52]{bianchi}) the map $H_i(\Br_n; H_1(\surf_n)) \to H_i(\Br_{n+1}; H_1(\surf_{n+1}))$ is an epimorphism for $i \leq \frac{n}{2}-1$ and an isomorphism for $i \leq \frac{n}{2}-2$.
In this section  we will prove a slightly sharper result using the explicit description of the homology.

\begin{df}
	The stabilization map $\stab: \confm{n} \to \confm{n+1}$ is given by 
	$$
	(p_1, \{p_2, \ldots, p_{n+1}\}) \mapsto (p_1, \{p_2, \ldots, p_{n+1}, \frac{1+\max_{1 \leq i \leq n+1} (|p_i|)}{2} \}) 
	$$
\end{df}

%In the notation of \cite{calmar} we have $H_i(\Gb{n}; \F[t]/(1+t)))$ = $H_i(\mathrm{B}(2,1,n); \F))$ and $H_i(\Gb{n}; \F[t]/(1-t^2))) = H_i(\mathrm{B}(4,2,n); \F))$.
We recall that $\confm{n}$ is a classifying space for $\Gb{n}$, that is the Artin group of type $\mathrm{B}$. Moreover we recall from \cite[Cor.~4.17, 4.18,4.19]{calmar} (notations $H_i(\mathrm{B}(2,1,n); \F))$ and $H_i(\mathrm{B}(4,2,n); \F))$ were there used respectively for $H_i(\Gb{n}; \F[t]/(1+t))$ and $H_i(\Gb{n}; \F[t]/(1-t^2))$):
\begin{prop}\label{prop:stab_rank}
Let $p$ be a prime or $0$. Let $\F$ be a field of characteristic $p$. Let us consider the stabilization homomorphisms
$$
\stab_*:H_i(\Gb{n}; \F[t]/(1+t)) \to H_i(\Gb{n+1}; \F[t]/(1+t))
$$
and
$$
\stab_*:H_i(\Gb{n}; \F[t]/(1-t^2)) \to H_i(\Gb{n+1}; \F[t]/(1-t^2)).
$$
\begin{enumerate}[label=\alph*)]
\item If $p=2$ $\stab_*$ is epimorphism for $2i \leq n$ and isomorphisms for $2i < n$.
\item If $p>2$ $\stab_*$ is epimorphism for $\frac{p(i-1)}{p-1}+2 \leq n$ and isomorphisms for $\frac{p(i-1)}{p-1}+2 < n$.
\item If $p=0$ $\stab_*$ is epimorphism for $i+1 \leq n$ and isomorphisms for $i+1 < n$.
\end{enumerate}
\end{prop}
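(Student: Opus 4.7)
The plan is to derive Proposition \ref{prop:stab_rank} from the explicit description of $H_*(\Gb{n}; \F[t^\pm])$ recalled in Propositions \ref{prop:hrazionale}, \ref{prop:homol_p2} and \ref{prop:homol_pdisp}, essentially following the line of \cite[Cor.~4.17, 4.18, 4.19]{calmar}. The starting point is to identify the geometric stabilization $\stab: \confm{n} \to \confm{n+1}$ with the chain map on the complex $C_*(\Gb{n}, \md)$ of Definition \ref{def:complesso} that appends a trailing $0$ to every string. By the last formula in that definition, this operation commutes with $\DB$ (since the inserted $0$ sits in the final position), and it increases the length $n$ by one while preserving the homological dimension.

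Next I would inspect the explicit generators of $H_i(\Gb{n}; \F[t^\pm])$ case by case according to the characteristic $p$ of $\F$, and for each pair $(i,n)$ verify that every generator in degree $n+1$ and dimension $i$ is the image under $\stab_*$ of one in degree $n$. For $p=2$ the generators from Proposition \ref{prop:homol_p2} have the shape $\DB(z_c\, x_{i_1}\cdots x_{i_k})/\alpha(t)$ with subscripts forcing the minimal length in dimension $i$ to grow like $2i$, giving the range $2i \leq n$ for surjectivity and $2i<n$ for injectivity. For $p>2$, the description of \cite[Thm.~4.12]{calmar} inherits the degree/dimension ratio $\tfrac{p}{p-1}$ coming from the Cohen--Fuks--Vainshtein generator $y_1$ of $H_*(\Br_n;\F_p)$, which produces the range $\tfrac{p(i-1)}{p-1}+2 \leq n$. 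For $p=0$ only the exterior-algebra part contributes (Proposition \ref{prop:hrazionale}), giving the cheap bound $i+1\leq n$.

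Finally, to pass from $\F[t^\pm]$-coefficients to the quotient modules $\F[t]/(1+t)$ and $\F[t]/(1-t^2)$, I would invoke the short exact sequences of coefficients
\begin{equation*}
0 \to \F[t^\pm] \xrightarrow{1+t} \F[t^\pm] \to \F[t]/(1+t) \to 0
\end{equation*}
and the analogous one with $1-t^2$. Naturality of the resulting long exact sequences with respect to $\stab_*$, together with the five lemma applied in the stability range already established over $\F[t^\pm]$, transfers the same range to $\F[t]/(1+t)$ and to $\F[t]/(1-t^2)$, yielding statements (a), (b) and (c).

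The main obstacle is the combinatorial bookkeeping in the second step: one must check, for each characteristic, that the list of generators of $H_i(\Gb{n+1};\F[t^\pm])$ in dimension $i$ is exactly obtained from that of $H_i(\Gb{n};\F[t^\pm])$ by appending a $0$, once the stated inequality between $i$ and $n$ holds. This amounts to a case-by-case inspection of the index sets parametrizing the families in Propositions \ref{prop:homol_p2} and \ref{prop:homol_pdisp}, where some care is needed at the boundary of the stability range (in particular to pin down surjectivity vs.\ bijectivity by a single unit).
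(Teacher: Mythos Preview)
The paper does not actually prove this proposition: it is stated as a direct recollection of \cite[Cor.~4.17, 4.18, 4.19]{calmar}, with no argument given beyond the citation and the translation of notation. So there is no ``paper's own proof'' to compare against; your sketch is essentially an expansion of what that citation is meant to supply.

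Your outline is sound and matches the spirit of \cite{calmar}. Two small comments. First, the route you propose --- establish stability for $\F[t^{\pm 1}]$-coefficients and then transfer to the quotients via the long exact sequence and the five lemma --- is slightly indirect compared to what \cite{calmar} does: there the homology with coefficients in $\F[t]/(1+t)$ and $\F[t]/(1-t^2)$ is computed directly (these are the groups denoted $H_*(\mathrm{B}(2,1,n);\F)$ and $H_*(\mathrm{B}(4,2,n);\F)$), with explicit bases indexed by monomials, and stability is read off from those bases just as you describe for $\F[t^{\pm 1}]$. The five-lemma transfer works, but you should be careful that it does not cost you one unit in the range (to conclude that $\stab_*$ is an isomorphism on $H_i$ of the quotient you need it to be an isomorphism on $H_i$ and $H_{i-1}$ and an epimorphism on $H_{i+1}$ of $\F[t^{\pm 1}]$); working directly with the quotient-coefficient bases avoids this bookkeeping. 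Second, your identification of $\stab$ with ``append a trailing $0$'' on the complex of Definition~\ref{def:complesso} is exactly right and is the mechanism behind the whole argument.
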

%\subsection{Stabilization an the map $\mu_*$}

The map $\mu$ commutes, up to homotopy, with the stabilization map $\stab: \confm{n} \to \confm{n+1}$:
$$
\xymatrix{
\confm{n-1} \times S^1 \ar[d]^{\stab \times \Id} \ar[r]^\mu& \confm{n} \ar[d]^\stab\\
\confm{n} \times S^1 \ar[r]^\mu & \confm{n+1}
}
$$

%\subsection{Stabilization and the map $\tau$}

The map $\tau$ naturally commutes with the stabilization homomorphism $\stab_*$ in homology, since $\tau$ is given by the multiplication by $(1-t)$.

%\subsection{Stabilization and the map $J$}

We can also define a geometric stabilization map $\stab: \confmd{n} \to \confmd{n+1}$ as follows:
$$
\gstab:(P, z, y) \mapsto (P \cup \{p_{\infty}\}, z, y \sqrt{z-p_{\infty}})
$$
where we set $p_\infty:= \frac{\max ( \{|p_i|, p_i \in P \} \cup \{|z|\})+1}{2}$ and since $\Re(z-p_{\infty}) < 0$ we choose
$\sqrt{z-p_{\infty}}$ to be the unique square root with $\Im(\sqrt{z-p_{\infty}}) >0$

The following diagram is homotopy commutative:
$$
\xymatrix{
	\conf_n \ar[d]^{\stab} \ar[r]^s& \confmd{n} \ar[d]^\gstab\\
	\conf_{n+1}  \ar[r]^s & \confmd{n+1}
}
$$
and this imply that $J$ commutes with the stabilization homomorphism $\gstab_*$.

We also need to prove that the following diagram commutes:
$$
\xymatrix{
	H_*(\Gb{n};\ring[t]/(1-t^2)) \ar[d]^{\stab_*} \ar[r]^(0.63){\simeq}& H_*(\confmd{n}) \ar[d]^{\gstab_*}\\
	H_*(\Gb{n+1};\ring[t]/(1-t^2))  \ar[r]^(0.63){\simeq} & H_*(\confmd{n+1})
}
$$
This is true since the homomorphism $$\stab_*:H_*(\Gb{n};\ring[t]/(1-t^2)) \to H_*(\Gb{n+1};\ring[t]/(1-t^2))$$ is induced by the map $\stab: \confm{n} \to \confm{n+1}$ previously defined and it is obtained applying the Shapiro lemma to $\confm{n} = k(\Gb{n},1)$, with $\ring[t]/(1-t^2)) = \ring[\Z_2] = \ring[\pi_1(\confm{n})/\pi_1(\confmd{n})]$.
It is straightforward to check that the diagram
$$
\xymatrix{
 \confmd{n} \ar[d]^\gstab \ar[r]& 	\confm{n}\ar[d]^{\stab} \\
 \confmd{n+1} \ar[r] & 	\confm{n+1}
}
$$
commutes, where the horizontal maps are the usual double coverings. Finally we have proved the following result.
\begin{lem}\label{lem:commutative_stab}
The following diagram is commutative
\begin{equation}\label{diag:stabilization}
\begin{tabular}{c}
\xymatrix @R=2pc @C=2pc {
%H_{i-1}(\Br_n; H_1(S^1 \times P)) \ar[r]^\iota \ar[d]^\simeq & H_{i-1}(\Br_n; H_1(\ddiskP))\ar[d]^\simeq  \\
H_{i-1}(\confm{n-1}) \otimes H_1(S^1) \ar[d]^{\stab_* \otimes \Id} \ar[r]^(0.6){\mu_*}  &	H_i(\confm{n}) \ar[r]^{\tau} \ar[d]^{\stab_*} &H_i(\confmd{n})  \ar[r]^(0.45)J \ar[d]^{\gstab_*}& H_i(\confmd{n}, \conf_n) \ar[d]^{\gstab_*} \\
H_{i-1}(\confm{n}) \otimes H_1(S^1) \ar[r]^(0.6){\mu_*}  &	H_i(\confm{n+1}) \ar[r]^{\tau} &H_i(\confmd{n+1})  \ar[r]^(0.45)J & H_i(\confmd{n+1}, \conf_{n+1}) 
}
\end{tabular}
\end{equation}
\end{lem}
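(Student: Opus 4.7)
The diagram splits into three squares, and the plan is to verify each one separately, using the preparatory material already assembled immediately above the statement; once all three commute, the full diagram does too.

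First I would dispatch the leftmost square. The map $\mu: \confm{n-1}\times S^1 \to \confm{n}$ is defined geometrically (Definition \ref{df:mu}) by inserting a small circle around the distinguished point $p_1$, while $\stab$ inserts a new point far away near the boundary of $\disk$. These two geometric operations are performed on disjoint pieces of the disk (for $\overline p$ bounded away from $1$, the stabilization point and the small circle around $p_1$ do not interact), so the square $\stab\circ\mu \simeq \mu\circ(\stab\times\Id)$ commutes up to homotopy on the nose; passing to homology gives the desired commutativity of $\mu_*$ with stabilization.

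Second, for the middle square involving $\tau$, I would invoke Proposition \ref{prop:commut_tau}, which identifies $\tau: H_i(\confm{n}) \to H_i(\confmd{n})$ with the multiplication-by-$(1-t)$ map $H_i(\Gb{n}; \ring[t]/(1+t)) \to H_i(\Gb{n}; \ring[t]/(1-t^2))$. Multiplication by $(1-t)$ is a chain map defined purely at the level of the coefficient module $\md$, hence it is automatically natural with respect to any map of groups, in particular with respect to the stabilization inclusion $\Gb{n}\hookrightarrow \Gb{n+1}$ (which is the one induced by $\stab$ on fundamental groups). So it remains to match the algebraic stabilization $\stab_*$ on $H_*(\Gb{n}; \ring[t]/(1-t^2))$ with the geometric stabilization $\gstab_*$ on $H_*(\confmd{n})$: this is the Shapiro-lemma identification already written out in the paragraph before the lemma, using the fact that $\ring[t]/(1-t^2) = \ring[\pi_1(\confm{n})/\pi_1(\confmd{n})]$ and that the covering square
\[
\xymatrix{
\confmd{n}\ar[d]^{\gstab}\ar[r] & \confm{n}\ar[d]^{\stab}\\
\confmd{n+1}\ar[r] & \confm{n+1}
}
\]
strictly commutes from the explicit formula for $\gstab$.

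Third, for the rightmost square, $J$ is induced by the inclusion of pairs $(\confmd{n},\emptyset)\hookrightarrow(\confmd{n},\conf_n)$, so its naturality with respect to $\gstab_*$ is automatic once one checks that $\gstab$ restricts on the section $s(\conf_n)$ to the stabilization map $\stab$ on $\conf_n$; this is precisely the homotopy-commutative square $\gstab\circ s \simeq s\circ\stab$ exhibited above the lemma, which follows by verifying that the chosen square root in the definition of $\gstab$ is compatible with the continuous choice of $\sqrt{p(z)}$ used in the definition of $s$ (both use the half-plane $\Re>0$ for the relevant factors).

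The only slightly delicate point, and the one I would spend most care on, is the identification of the algebraic and geometric stabilizations in the middle square: concretely, that the chain-level inclusion of $\Gb{n}$-resolutions induced by $\Gb{n}\hookrightarrow\Gb{n+1}$ is carried by the covering-space isomorphism to the map induced by $\gstab$. Everything else is formal naturality of the constructions entering the diagram.
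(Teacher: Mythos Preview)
Your proposal is correct and follows exactly the paper's approach: the paper assembles the commutativity of each of the three squares in the paragraphs immediately preceding the lemma (homotopy-commutativity of $\mu$ with $\stab$, naturality of $\tau$ as multiplication by $(1-t)$, the $s$/$\stab$ square yielding naturality of $J$, and the Shapiro-lemma identification of algebraic with geometric stabilization via the covering square), and then simply states the lemma as a consequence. You have reproduced precisely this decomposition, including correctly flagging the algebraic-versus-geometric stabilization identification as the one nontrivial verification.
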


\begin{thm}\label{thm:stabilization}
Consider homology with integer coefficients. The homomorphism 
$$
H_i(\Br_n; H_1(\surf_n)) \to H_i(\Br_{n+1}; H_1(\surf_{n+1}))
$$
is an epimorphism for $i \leq \frac{n}{2}-1 $
and an isomorphism for $i < \frac{n}{2}-1$.

For $n$ even $H_i(\Br_n; H_1(\surf_n))$ has no $p$ torsion (for $p > 2$) when $\frac{pi}{p-1}+3 \leq n$ and no free part for $i+3 \leq n$. In particular for $n$ even,  when $\frac{3i}{2}+3 \leq n$ the group $H_i(\Br_n; H_1(\surf_n))$ has only $2$-torsion.
\end{thm}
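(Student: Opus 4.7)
My plan is to apply the Five Lemma to the ladder obtained from the long exact sequence \eqref{eq:bia2} by comparing braid indices $n$ and $n+1$, with vertical arrows given by the stabilization maps; commutativity of this ladder is exactly Lemma \ref{lem:commutative_stab}. Apart from $H_{i-1}(\Br_n;H_1(\surf_n))$ itself, the terms in \eqref{eq:bia2} are of the form $H_j(\confm{n-1})$ and $H_j(\confmd{n},\conf_n)$; the latter is a direct summand of $H_j(\confmd{n})\simeq H_j(\Gb{n};\ring[t]/(1-t^2))$ complementary to the $H_j(\conf_n)$ coming from the section, which stabilizes in a strictly wider range. Hence Proposition \ref{prop:stab_rank} (combined with Arnold's integer stability on the $\conf_n$-summand) controls every stabilization range that enters the Five Lemma.

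For part (1) I work integrally, using the $p=2$ case: the stabilization on $H_i(\confm{n})$ and on $H_i(\confmd{n},\conf_n)$ is iso for $2i<n$ and epi for $2i\leq n$. At the Five Lemma applied to $H_i(\Br_n;H_1(\surf_n))$, a direct index check identifies the binding iso constraint as iso of the stabilization on $H_{i+1}(\confmd{n},\conf_n)$, i.e.\ $2(i+1)<n$, equivalently $i<n/2-1$; similarly the binding epi constraint is $2(i+1)\leq n$, equivalently $i\leq n/2-1$.

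For part (2) I run the same argument with $\F_p$ coefficients for $p>2$ and separately with $\Q$ coefficients. Proposition \ref{prop:stab_rank} gives iso of the stabilization on $H_{i+1}(\Gb{n};\F_p[t]/(1-t^2))$ for $\tfrac{pi}{p-1}+3\leq n$ and on $H_{i+1}(\Gb{n};\Q[t]/(1-t^2))$ for $i+3\leq n$; these remain the binding constraints, so the stabilizations $H_i(\Br_n;H_1(\surf_n);\F_p)\to H_i(\Br_{n+1};H_1(\surf_{n+1});\F_p)$ and $H_i(\Br_n;H_1(\surf_n);\Q)\to H_i(\Br_{n+1};H_1(\surf_{n+1});\Q)$ are iso in those ranges. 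For $n$ even, $n+1$ is odd and Theorem \ref{th:no4tor} makes $H_i(\Br_{n+1};H_1(\surf_{n+1});\Z)$ pure $2$-torsion; hence its tensor with $\F_p$ ($p>2$) and with $\Q$ vanish, and the isomorphism forces the corresponding groups at level $n$ to vanish as well. This yields the absence of odd $p$-torsion and of free part in the stated ranges. The final "only $2$-torsion" claim is immediate from $\tfrac{p}{p-1}\leq\tfrac{3}{2}$ for $p\geq 3$ and $1\leq\tfrac{3}{2}$, so that $\tfrac{3i}{2}+3\leq n$ simultaneously implies $\tfrac{pi}{p-1}+3\leq n$ for every prime $p\geq 3$ and $i+3\leq n$.

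The main obstacle is the bookkeeping of the Five Lemma indices: among the four terms adjacent to $H_i(\Br_n;H_1(\surf_n))$, one must check that the binding stabilization constraint is always the iso condition on $H_{i+1}(\confmd{n},\conf_n)$ (the term on the source side of the connecting homomorphism), rather than any of the three $H_j(\confm{n-1})$-terms or the $H_i(\confmd{n},\conf_n)$-injectivity. Once this is verified, which reduces to a uniform comparison of the ranges in Proposition \ref{prop:stab_rank} at consecutive degrees, the translation to the claimed bounds on $n$ is routine.
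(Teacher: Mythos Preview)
Your proposal is correct and follows essentially the same approach as the paper: both feed Proposition~\ref{prop:stab_rank} into the Five Lemma applied to the ladder of long exact sequences \eqref{eq:bia1}/\eqref{eq:bia2}, with commutativity supplied by Lemma~\ref{lem:commutative_stab}, and then read off part~(2) by comparing $n$ even with $n+1$ odd via Theorem~\ref{th:no4tor}. The only difference is in the order of operations for part~(1): the paper runs the Five Lemma separately with $\Z_p$ coefficients for each prime $p$ (and over $\Q$) and only afterwards invokes the Universal Coefficients Theorem to assemble the integral statement, whereas you first upgrade the field-coefficient stability of the flanking terms to integral stability (observing that $p=2$ is the binding case among all primes in Proposition~\ref{prop:stab_rank}) and then run the Five Lemma once, integrally. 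Both routes are valid and yield the same bounds; your identification of $H_{i+1}(\confmd{n},\conf_n)$ as the binding term is exactly the computation the paper carries out.
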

\begin{proof}
The maps in the diagram  \eqref{diag:stabilization} with $\Z_p$ coefficients fits in the map of long exact sequences
$$
%\begin{tabular}{c}
\xymatrix @R=1.5pc @C=0.8pc {
\cdots \ar[r] &	H_{i-1}(\confm{n-1};\Z_p) \otimes H_1(S^1) \ar[d]^{\stab_* \otimes \Id} \ar[r]^(0.6){\iota}  &	H_i(\confmd{n}, \conf_n;\Z_p) \ar[d]^{\gstab_*} \ar[r] & H_{i-1} (\Br_n; H_1(\surf_n;\Z_p)) \ar[d]^{\stab_*} \ar[r] & \cdots\\
\cdots \ar[r] &	H_{i-1}(\confm{n};\Z_p) \otimes H_1(S^1) \ar[r]^(0.55){\iota}  & H_i(\confmd{n+1}, \conf_{n+1};\Z_p) \ar[r]  & H_{i-1} (\Br_{n+1}; H_1(\surf_n;\Z_p)) \ar[r] & \cdots
}
%\end{tabular}
$$
that near $H_{i-1} (\Br_n; H_1(\surf_n;\Z_p))$
looks as follows:
\begin{equation}\label{diag:stabilization2}
\begin{tabular}{l}
\xymatrix @R=1.5pc @C=0.8pc {
	\!\!\cdots\ar[r]\! &	\!H_i(\confmd{n}, \conf_n;\Z_p)\! \ar[d]^{\gstab_*} \ar[r] &\!H_{i-1} (\Br_n; H_1(\surf_n;\Z_p))\! \ar[d]^{\stab_*} \ar[r] &\!H_{i-2}(\confm{n-1};\Z_p)\! \otimes\!H_1(S^1)\! \ar[d]^{\stab_* \otimes \Id} \ar[r] &\!\cdots\!\!\\
	\!\!\cdots\!\ar[r]\!& \!H_i(\confmd{n+1}, \conf_{n+1};\Z_p)\! \ar[r]  & \!H_{i-1} (\Br_{n+1}; \!H_1(\surf_n;\Z_p))\!\ar[r] &	\!H_{i-2}(\confm{n};\Z_p) \! \otimes\!H_1(S^1)\!\ar[r]  & \!\cdots\!\!
}
\end{tabular}
\end{equation}
For $p=2$, from Proposition \ref{prop:stab_rank} and Lemma \ref{lem:commutative_stab} we have that the vertical map $\gstab_*$ on the left of diagram \eqref{diag:stabilization2} is an epimorphism for for $i \leq \frac{n}{2}$ and isomorphisms for $i < \frac{n}{2}$. The vertical map $\stab_* \otimes \Id$ on the right of diagram \eqref{diag:stabilization2} is an isomorphism for $i \leq \frac{n}{2}$.

This implies that $$
\stab_*:H_i(\Br_n; H_1(\surf_n; \Z_2)) \to H_i(\Br_{n+1}; H_1(\surf_{n+1}; \Z_2))
$$
is epimorphism for $i \leq \frac{n}{2} -1$
and an isomorphism for $i < \frac{n}{2}-1$. 

For $p>2$, from Proposition \ref{prop:stab_rank} and Lemma \ref{lem:commutative_stab} we have that the vertical map $\gstab_*$ on the left of diagram \eqref{diag:stabilization2} is an epimorphism for $\frac{p(i-1)}{p-1}+2 \leq n$ and isomorphisms for $\frac{p(i-1)}{p-1}+2 < n$. The vertical map $\stab_* \otimes \Id$ on the right of diagram \eqref{diag:stabilization2} is an isomorphism  for $\frac{p(i-1)}{p-1}+2 \leq n$. We notice that actually these conditions are weaker that the ones for $p=2$.

This implies that for $p>2$ $$
\stab_*:H_i(\Br_n; H_1(\surf_n; \Z_p)) \to H_i(\Br_{n+1}; H_1(\surf_{n+1}; \Z_p))
$$
is epimorphism for $\frac{pi}{p-1}+2 \leq n$
and an isomorphism for $\frac{pi}{p-1}+2 < n$. 

The same argument for $p=0$ shows that
$$
\stab_*:H_i(\Br_n; H_1(\surf_n; \Q)) \to H_i(\Br_{n+1}; H_1(\surf_{n+1}; \Q))
$$
is an epimorphism for $i+2 \leq n$
and an isomorphism for $i+2< n$. 

From the Universal Coefficients Theorem for homology we get that the homomorphism 
$$
H_i(\Br_n; H_1(\surf_n)) \to H_i(\Br_{n+1}; H_1(\surf_{n+1}))
$$
is an epimorphism for $i \leq \frac{n}{2}-1 $
and an isomorphism for $i < \frac{n}{2}-1$.

Since the integer  homology $H_i(\Br_n; H_1(\surf_n))$ has only $2$-torsion for $n$ odd, 
the stabilization implies that for $n$ even $H_i(\Br_n; H_1(\surf_n))$ has no $p$ torsion (for $p > 2$) for $\frac{pi}{p-1}+3 \leq n$ and no free part for $i+3 \leq n$.

In particular, for $n$ odd,  $\frac{3i}{2}+3 \leq n$ we have that $H_i(\Br_n; H_1(\surf_n))$ has only $2$-torsion.
\end{proof}

\begin{thm}\label{thm:unstable}
For $n$ even the groups $H_i(\Br_n;H_1(\surf_n;\Z))$ are torsion, except for $i=n-1, n-2$ where $H_i(\Br_n;H_1(\surf_n;\Q)) = \Q.$
\end{thm}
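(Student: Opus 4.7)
The plan is to reduce the claim to a computation of the rational homology $H_i(\Br_n; H_1(\surf_n; \Q))$ and extract the integral statement from the Universal Coefficient Theorem. The rational computation is handled by the long exact sequence \eqref{eq:bia2} combined with Proposition \ref{prop:coker_n_pari}.

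First, I would tensor the long exact sequence \eqref{eq:bia2} with $\Q$. By Proposition \ref{prop:coker_n_pari}, for $n$ even and over a field of characteristic $0$, the connecting map $\iota$ is injective in the range $i>1$, and its cokernel has rank $1$ in exactly two degrees and vanishes elsewhere. Because two consecutive copies of $\iota$ are injective, the long exact sequence splits into short exact pieces, identifying $H_i(\Br_n; H_1(\surf_n; \Q))$ with $\coker(\iota)$ in the appropriate degree. This yields rank $1$ precisely at $i = n-1, n-2$ and vanishing for all other $i$ within the range given by the proposition.

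For the very low degrees $i = 0, 1$ which fall outside the range covered by Proposition \ref{prop:coker_n_pari}, I would invoke Theorem \ref{thm:stabilization} together with Theorem \ref{th:no4tor}. In this range, the stabilization homomorphism $H_i(\Br_n; H_1(\surf_n)) \to H_i(\Br_{n+1}; H_1(\surf_{n+1}))$ is an isomorphism (or epimorphism), and for $n+1$ odd the target is pure $2$-torsion, hence zero rationally. This forces $H_i(\Br_n; H_1(\surf_n; \Q)) = 0$ in these low degrees as well.

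Combining the above, $H_i(\Br_n; H_1(\surf_n; \Q)) = \Q$ precisely when $i \in \{n-1, n-2\}$, and is $0$ otherwise. By the Universal Coefficient Theorem, the integral homology $H_i(\Br_n; H_1(\surf_n; \Z))$ is therefore pure torsion for $i \notin \{n-1, n-2\}$ and has free rank $1$ (possibly with a torsion summand) for $i \in \{n-1, n-2\}$.

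The main obstacle is the bookkeeping across various index shifts: the Shapiro-type identifications such as $H_i(\confmd{n}, \conf_n) \cong H_{i-1}(\Br_n; H_1(\ddiskP))$ introduce a degree shift that must be reconciled with the shift in the Mayer--Vietoris connecting homomorphism and with the indexing conventions of Proposition \ref{prop:coker_n_pari}. Once this bookkeeping is straightened out, the argument is essentially a direct application of results already established in the paper.
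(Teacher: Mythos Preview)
Your approach is essentially the paper's own: Proposition~\ref{prop:coker_n_pari} for the generic range, stabilization to odd $n$ (where Theorem~\ref{th:no4tor} gives pure $2$-torsion) for the low degrees $i=0,1$, and Universal Coefficients to pass from $\Q$ to $\Z$. The structure is correct.

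There is one genuine gap. In your low-degree step you write that the stabilization map ``is an isomorphism (or epimorphism)'' and conclude that the source vanishes rationally. An epimorphism onto a torsion group does \emph{not} force the source to be torsion: the kernel could carry a free summand. Now, Theorem~\ref{thm:stabilization} gives an \emph{isomorphism} for $i<\frac{n}{2}-1$, and for even $n\geq 6$ this already covers $i=0,1$. But for $n=4$ you only get an isomorphism for $i<1$, i.e.\ $i=0$; at $i=1$ the map is merely an epimorphism, so your argument does not rule out a free part in $H_1(\Br_4;H_1(\surf_4;\Z))$. The paper closes this gap by a direct computation for $n=4$ (cf.\ Table~\ref{tab:conti}, where $H_1(\Br_4;H_1(\surf_4))=\Z_2^2$). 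You should either invoke that computation or give an independent argument for this single case.

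Your closing remark about index bookkeeping is apt: the shift between the Proposition's index $i$ (which labels $\iota$ on $H_{i-1}$) and the homological degree of $H_*(\Br_n;H_1(\surf_n))$ really does need to be tracked to land on $i=n-1,n-2$ rather than a shifted pair, but you correctly flag this as routine.
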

\begin{proof}
The result follows from Proposition \ref{prop:coker_n_pari} and, for $i<2$, from the stabilization Theorem \ref{thm:stabilization}. For $n=4$ the result follows from a direct computation.
\end{proof}

In Table \ref{tab:conti} we present some computations of the groups $H_i(\Br_n;H_1(\surf_n))$ (with integer coefficients). The computations are obtained using an Axiom implementation of the complex introduced in \cite{salvetti}.
\begin{table*}[htb] 
\renewcommand{\arraystretch}{1.2}
\setlength\extrarowheight{3pt}
\begin{center}
\begin{tabular}{|c|c|c|c|c|c|c|c|c|c|c|c|}
\hline
\backslashbox{$n$}{$i$}
& 1 & 2 & 3 & 4 & 5 & 6 & 7 & 8 & 9 & 10 & 11\\
\hline
$3$ & $\Z_2$ &&&&&&&&&&\\
\hline
$4$ & $\Z_2^2$ & $\Z$&$\Z$&&&&&&&&\\
\hline
$5$
 & 
\cellcolor{lightgray!30}
$\Z_2$ & 
$\Z_2$
&$\Z_2$&&&&&&&&\\ 
\hline
$6$ & $\Z_2$ & $\Z_2^2$&$\Z_2^2  \Z_3$&$\Z$&$\Z$&&&&&&\\
\hline
$7$ & $\Z_2$ & \chl $\Z_2$&$\Z_2^2 $&$\Z_2^2$&$\Z_2$&&&&&&\\
\hline
$8$ & $\Z_2$ & $\Z_2$&$\Z_2^3 $&$\Z_2^3\Z_3$&$\Z_2^3\Z_3$&$\Z$&$\Z$&&&&\\
\hline
$9$ & $\Z_2$ & $\Z_2$&\chl $\Z_2^2 $&$\Z_2^3$&$\Z_2^3$&$\Z_2^2$&$\Z_2$&&&&\\
\hline
${10}$ & $\Z_2$ & $\Z_2$&$\Z_2^2 $&$\Z_2^4$&$\Z_2^4$&$\Z_2^4\Z_3$&$\Z_2^3 \Z_3\Z_5$&$\Z$&$\Z$&&\\
\hline
${11}$ & $\Z_2$ & $\Z_2$&$\Z_2^2 $&\chl $\Z_2^3$&$\Z_2^4$&$\Z_2^4$&$\Z_2^4 $&$\Z_2^3$&$\Z_2$&&\\
\hline
${12}$ & $\Z_2$ & $\Z_2$&$\Z_2^2 $&$\Z_2^3$&
% $\Z_2^5$&$\Z_2^5$ 
$\Z_2^5$ & $\Z_2^5$
&$\Z_2^6\Z_3$&$\Z_2^6\Z_3\Z_5$&$\Z_2^3\Z_3\Z_5$&$\Z$&$\Z$\\
\hline
${13}$ & $\Z_2$ & $\Z_2$&$\Z_2^2 $&$\Z_2^3$&\chl $\Z_2^4$&$\Z_2^5$&$\Z_2^6$&$\Z_2^6$&$\Z_2^5$&$\Z_2^3$&$\Z_2$\\
\hline
\end{tabular}
\end{center}
\caption{Computations of $H_i(\Br_n;H_1(\surf_n))$. For each column the first stable group is highlighted.}\label{tab:conti}
\end{table*}

\subsection{Poincar\'e polynomials}
We use the previous results to compute explicitly the Poincar\'e polynomials for odd $n$.

%-ogni generatore da luogo ad una classe stabile. infatti la stabilizzazione ($n \mapsto n+2$ ) viene data dalla moltiplicazione per $x_0^2$ che commuta con $\tau$.

\begin{thm}\label{thm:poincare}
For odd $n$, 
the rank of $H_i(\Br_n;H_1(\surf_n;\Z))$ as a $\Z_2$-module is the coefficient of $q^it^n$ in the series
 $$
\widetilde{P}_2(q,t)=\frac{qt^3}{(1-t^2q^2)} \prod_{i \geq 0} \frac{1}{1-q^{2^i-1}t^{2^i}}
$$
In particular the series $\widetilde{P}_2(q,t)$ is 
the Poincar\'e series of the homology group  %$$\oplus_{n}H_*(\Br_{2n+1};H_1(\surf_{2n+1};\Z))$$ 
$$\bigoplus_{n \mbox{\scriptsize odd}}H_*(\Br_{n};H_1(\surf_n;\Z))$$
as a $\Z_2$-module. 
%is the coefficient of $t^n$ of the series:	
\end{thm}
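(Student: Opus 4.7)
My plan is to compute the $\Z_2$-rank $a_i$ of $H_i(\Br_n; H_1(\surf_n; \Z))$ for odd $n$ by passing to $\F_2$ coefficients and applying the exact sequence \eqref{eq:bia2}. By Theorem \ref{th:no4tor} the integral group has only $2$-torsion, and by Theorem \ref{thm:4tor} no $4$-torsion, so $H_i(\Br_n; H_1(\surf_n; \Z)) \cong \Z_2^{a_i}$. The Universal Coefficient Theorem then gives
$$
\dim_{\F_2} H_i(\Br_n; H_1(\surf_n; \F_2)) = a_i + a_{i-1}.
$$

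Meanwhile, the $\F_2$-long exact sequence \eqref{eq:bia2} identifies this same dimension with $\overline{u}_i + \overline{v}_{i-1}$, where $\overline{u}_i$ and $\overline{v}_i$ are the $\F_2$-ranks of $\coker(\iota_i^{\F_2})$ and $\ker(\iota_i^{\F_2})$ respectively. By Remark \ref{rem:basi_mod_2}, these two spaces carry natural bases in bijection---both indexed by the same data $(c, x_0 x_{i_1} \cdots x_{i_k})$ with $c > 0$ even---so $\overline{u}_i = \overline{v}_i$. Combining these two identities with the initial condition $a_{-1} = \overline{u}_{-1} = 0$, an induction on $i$ forces $a_i = \overline{u}_i$.

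The remaining step is to compute the bivariate generating series $\sum_{n \text{ odd}, i} \overline{u}_i \, q^i t^n$ from the explicit basis. A cokernel class $\gs(z_c, x_0 x_{i_1} \cdots x_{i_k})$ with $c = 2m+2$ ($m \geq 0$) and monomial $N := x_{i_1} \cdots x_{i_k} \in \F_2[x_0, x_1, \ldots]$ contributes one $\F_2$-dimension in bidegree $q^i t^n$ with $i = 2m+1+\dim N$ and $n = 2m+3+\deg N$; the exponents come from the homological dimension $c + \dim N$ of $\DB(z_{c+1}N)/(1+t)$ in $H_*(\confmd{n}; \F_2)$, the Salvetti-complex convention of dropping the trailing $0$, and the degree shift coming from the isomorphism $H_i(\Br_n; H_1(\ddiskP)) \cong H_{i+1}(\confmd{n}, \conf_n)$. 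Summing over $m \geq 0$ produces the factor $\frac{qt^3}{1-q^2 t^2}$, while summing over monomials $N$ produces the stable braid Poincar\'e series $\prod_{j \geq 0} \frac{1}{1 - q^{2^j-1}t^{2^j}}$; their product is precisely $\widetilde{P}_2(q,t)$. The most delicate part will be verifying the bijection $\overline{u}_i = \overline{v}_i$ from Remark \ref{rem:basi_mod_2} and the exact bookkeeping of the two gradings, since low-$n$ sanity checks (for instance the entry $a_1 = 1$ when $n=3$) depend sensitively on the string-length conventions inside the Salvetti complex of $\Gb{n}$.
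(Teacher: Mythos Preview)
Your proposal is correct and follows essentially the same route as the paper: use Theorem~\ref{th:no4tor} to reduce to counting $\F_2$-dimensions, combine the Universal Coefficient relation $\dim_{\F_2} H_i = a_i + a_{i-1}$ with the $\F_2$-exact sequence giving $\dim_{\F_2} H_i = \overline u_i + \overline v_{i-1}$, invoke the bijection $\overline u_i = \overline v_i$ from Remark~\ref{rem:basi_mod_2}, and then read off the bivariate series from the explicit basis $\gs(z_c, x_0 x_{i_1}\cdots x_{i_k})$ with $c>0$ even. The paper phrases the passage from $\F_2$-coefficients to integral rank as ``divide by $(1+q)$'' rather than your induction $a_i=\overline u_i$, but these are the same computation, and your bidegree bookkeeping and resulting series match the paper's exactly.
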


\begin{proof}
Let $P_2(\Br_n,H_1(\surf_n))(q)$ be the Poincar\'e polynomial for $H_*(\Br_n;H_1(\surf_n;\Z))$ as a $\Z_2$-module. 
Since we already know that for $n$ odd the homology $H_i(\Br_n;H_1(\surf_n;\Z))$ has only $2$-torsion, we can obtain $P_2(\Br_n,H_1(\surf_n))(q)$ from the Universal Coefficients Theorem as follows. We compute the Poincar\'e polynomial 
$
P_2(\Br_n, H_1(\surf_n;\Z_2))(q)
$
for
$H_i(\Br_n;H_1(\surf_n;\Z_2))
$
and we divide by $1+q$.

In order to compute $
P_2(\Br_n, H_i(\surf_n;\Z_2))(q)
$ 
we consider the short exact sequence
$$
0 \to \coker \iota_i \to H_{i}(\Br_n; H_1(\surf_n:\Z_2)) \to \ker \iota_{i-1} \to 0
$$
where we recall that $\iota_i$ is the map
$$\iota_i:H_{i}(\Br_n; H_1(S^1 \times P:\Z_2)) \to H_{i}(\Br_n; H_1(\ddiskP:\Z_2)).$$
Now it follows from the Remark \ref{rem:basi_mod_2} and from the description in Section \ref{sec:no4tor} that for a fixed odd $n$ the ranks of $\ker \iota$ and $\coker \iota$
are respectively the coefficients of $q^it^n$ in the following series:
$$
P_2(\coker \iota) = P_2(\ker \iota)  =
\frac{qt^3}{1-t^2q^2} \prod_{j \geq 0} \frac{1}{1-q^{2^j-1}t^{2^j}}
$$
Clearly the Polynomial for 
$H_{i}(\Br_n; H_1(\surf_n:\Z_2))$ is given by the coefficient of $t^n$ in the sum
$$
 P_2(\coker \iota) + qP_2(\ker \iota) 
$$
and hence dividing by $(1+q)$ we get our result. 
\end{proof}

%\begin{cor}
The same argument of the previous proof can be applied in stable rank. From the Remark \ref{rem:basi_mod_2} the Stable Poincar\'e polynomial of both $\coker \iota$ and $\ker \iota$ with $\Z_2$ coefficients is the following:
$$
\frac{q}{1-q^2} \prod_{j \geq 1} \frac{1}{1-q^{2^j-1}}
$$
%\end{cor}
Since there is no free part in $H_i(\Br_n;H_1(\surf_n;\Z))$ all these groups have only $2$-torsion. In particular for integer coefficients we get the following statement.
\begin{thm}\label{thm:stablepoincare}
The Poincar\'e polynomial of the stable homology $H_i(\Br_n;H_1(\surf_n;\Z))$ as a $\Z_2$-module is the following:
$$
P_2(\Br;H_1(\Sigma))(q) = \frac{q}{1-q^2} \prod_{j \geq 1} \frac{1}{1-q^{2^j-1}}
$$
\end{thm}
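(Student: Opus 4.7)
The plan is to mimic exactly the argument of Theorem \ref{thm:poincare}, but in the stable range where by Theorem \ref{thm:stabilization} everything becomes cleaner. First, I would observe that in the stable range the integer homology $H_i(\Br_n;H_1(\surf_n;\Z))$ is pure $2$-torsion: for $n$ odd this is Theorem \ref{th:no4tor}, and for $n$ even Theorem \ref{thm:stabilization} tells us that in the stable range there is no free part and no odd-primary torsion. Hence, by the Universal Coefficients Theorem,
$$
P_2(\Br_n,H_1(\surf_n;\Z_2))(q) \;=\; (1+q)\,P_2(\Br_n,H_1(\surf_n;\Z))(q)
$$
in the stable range, so it suffices to compute the stable $\Z_2$-Poincar\'e series and then divide by $(1+q)$.

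Next I would use the short exact sequence
$$
0 \to \coker\iota_i \to H_i(\Br_n;H_1(\surf_n;\Z_2)) \to \ker\iota_{i-1} \to 0
$$
coming from \eqref{eq:bia2}. By Remark \ref{rem:basi_mod_2} a $\Z_2$-basis of $\coker\iota_i$ is given by the classes $\gs(z_c,x_0 x_{i_1}\cdots x_{i_k})$ with $c>0$ (of the right degree and homological dimension), and a $\Z_2$-basis of $\ker\iota_i$ is given by the corresponding classes $\gsu(z_c,x_0 x_{i_1}\cdots x_{i_k})$ with $c>0$; these two bases are in bijection, so the Poincar\'e series of $\coker\iota$ and $\ker\iota$ agree.

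I would then compute this common Poincar\'e series in stable rank. Starting from the unstable formula $\widetilde{P}_2(q,t)$ of Theorem \ref{thm:poincare}, the factor $\prod_{i\geq 0}\frac{1}{1-q^{2^i-1}t^{2^i}}$ encodes the monomials in the $x_j$'s (with $x_j$ contributing $q^{2^j-1}t^{2^j}$); the factor $\frac{qt^3}{1-t^2q^2}$ encodes the choice of $z_{c+1}$ with $c$ a positive even integer (so $c+1\geq 3$). The $i=0$ factor $\frac{1}{1-t}$ simply accounts for the free choice of the degree $n$: it contributes $1$ to every coefficient in $q$ once $n$ is large enough. Thus in the stable limit the $i=0$ factor drops out and the $t$ variables can be set to $1$, yielding stable Poincar\'e series
$$
P_2(\coker\iota)(q) \;=\; P_2(\ker\iota)(q) \;=\; \frac{q}{1-q^2}\prod_{j\geq 1}\frac{1}{1-q^{2^j-1}}.
$$

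Finally, combining the two terms of the short exact sequence gives the stable $\Z_2$-Poincar\'e series
$$
P_2(\Br;H_1(\surf;\Z_2))(q) \;=\; P_2(\coker\iota)(q) + q\,P_2(\ker\iota)(q) \;=\; (1+q)\,\frac{q}{1-q^2}\prod_{j\geq 1}\frac{1}{1-q^{2^j-1}},
$$
and dividing by $(1+q)$ delivers the claimed formula. The only delicate step I anticipate is the bookkeeping in the stable limit, namely justifying that removing the $i=0$ factor from the product and setting $t=1$ genuinely computes the limiting coefficient $[q^i]$ as $n\to\infty$; this is immediate once one notes that the $\frac{1}{1-t}$ factor contributes $t^m$ for all $m\geq 0$ and so every monomial in the remaining variables eventually appears in degree $n$ for $n$ sufficiently large, which is precisely the stability range of Theorem \ref{thm:stabilization}.
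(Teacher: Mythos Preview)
Your proposal is correct and follows essentially the same route as the paper: the paper's own proof simply says ``the same argument of the previous proof can be applied in stable rank,'' records the stable Poincar\'e series of $\ker\iota$ and $\coker\iota$ from Remark \ref{rem:basi_mod_2}, and observes that the absence of a free part forces only $2$-torsion. Your write-up is a faithful expansion of exactly these steps, including the passage from the unstable series $\widetilde{P}_2(q,t)$ to the stable one by dropping the $i=0$ factor and setting $t=1$.
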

An explicit computation of the first terms of the stable series $P_2(\Br;H_1(\Sigma))(q)$ gives
$$
q+q^2+ 2q^3 + 3q^4 + 4q^5 + 5q^6 + 7q^7 + 9 q^8 + 11q^9 + 14q^{10} + 17 q^{11} + \ldots
$$

%\section{More}
%-generalizzazione per rivestimenti a $p$ fogli?

%\addcontentsline{toc}{section}{References}
%\nocite{*} % comment this to remove uncited references
\bibliographystyle{alpha}
\bibliography{biblio} 

\end{document}